\documentclass[12pt, english]{article}
\usepackage[T1]{fontenc}
\usepackage[utf8]{inputenc}
\usepackage{amsmath}
\usepackage{amsthm}
\usepackage{amssymb}
\usepackage{enumerate}
\usepackage{color}
\usepackage[%
  pagebackref, %backref with page and hyper
  breaklinks, %allow line break in toc
  colorlinks=true, linkcolor=blue, citecolor=blue, urlcolor=blue
]{hyperref}

\numberwithin{equation}{section}
\makeatletter
\usepackage{tikz-cd}

\newcommand{\pzbox}{%
	\mathbin{%
		\tikz[baseline=-0.05em, scale=0.24, line width=0.06em]{%
			\draw (0,0) rectangle (1,1);
			\draw (0,1) -- (1,0);
		}%
	}%
}

\theoremstyle{plain}
\theoremstyle{remark}

\usepackage[blocks]{authblk}
\usepackage{fullpage}
\usepackage{makeidx}
\usepackage{amsfonts}
\usepackage{latexsym}
\usepackage{babel}
\makeatother

\allowdisplaybreaks

\def \1{{\bf 1}}
\def\C{{\mathbb{C}}}
\def\Z{{\mathbb{Z}}}

\def\N{{\mathbb{N}}}

\def\Res{\mathrm{Res}}

\def\wt{\mathrm{wt}}

\def\G{{\mathcal{G}}}
\def\H{{\mathcal{H}}}
\def\Hom{\operatorname{Hom}}

\def\Y{{\mathcal{Y}}}

\def\<{\langle}
\def\>{\rangle}

\theoremstyle{definition}
\newtheorem{lemma}{Lemma}[section]
\newtheorem{theorem}[lemma]{Theorem}
\newtheorem{corollary}[lemma]{Corollary}
\newtheorem{proposition}[lemma]{Proposition}
\newtheorem{definition}[lemma]{Definition}

\newtheorem{example}[lemma]{Example}
\newtheorem{remark}[lemma]{Remark}
\usepackage{times}
\newtheorem{mainthm}{Theorem}

\title{Finiteness and Construction of Internal Hom for Vertex Operator Algebras}

\author{Chao Yang \\ Email: yangcabc@163.com }
\affil{ School of Mathematics,  Southwest Jiaotong University, Chengdu 611756, China}

\author{Yiyi Zhu  \\  Email: yzhu51@gdut.edu.cn  }     
\affil{School of Mathematics and Statistics, Guangdong University of Technology,  Guangzhou 510520, China}

\date{}

\begin{document}
\maketitle
	
\begin{abstract}

Let $V$ be a vertex operator algebra, and let $W^1$ and $W^2$ be restricted $V$-modules. 
We construct a generalized $V$-module $ \H(W^1, W^2)$ characterized by canonical universal properties. 
We show that, under suitable hypotheses, $ \H(W^1, W^2)$ realizes the internal Hom object in the tensor category of restricted $V$-modules. Although our construction differs from Li's, we show that it agrees with the natural logarithmic generalization of 
Li's module $\Delta(W^1, W^2)$. 
We further establish a canonical isomorphism between $\mathcal H\big(W^1,(W^2 )^\prime \big)$ and the $P(z_0)$-dual product 
$ W^1 \pzbox_{P(z_0)} W^2 $  recently  constructed by Du and Huang.
Under the $C_1$-cofiniteness condition, 
we investigate finiteness properties of $ \mathcal H(W^1, W^2)$. 
As applications, we obtain a natural isomorphism between $ \mathcal H(W^1, W^2)'$ and $  W^1 \boxtimes (W^2)'$,
and prove the finiteness of the corresponding fusion rules. 

\end{abstract}

\tableofcontents

\section{Introduction}

Let $(\mathcal{C}, \otimes)$ be a monoidal category. For objects $W_1, W_2 \in \mathcal{C}$,
an object $\underline{\text{Hom}}(W_1, W_2) \in  \mathcal{C}$  is called the internal Hom from $W^1$ to $W^2$ if there exists a natural isomorphism
\[ 
\Hom_{\mathcal{C}} \big(X, \; \underline{\Hom}(W_1, W_2) \big) 
\cong \Hom_{\mathcal{C}}( X \otimes W_1, \; W_2),
\]
for all $X \in \mathcal{C}$. 
Now let $V$ be a vertex operator algebra, 
and let $\mathcal{C}_V$ denote the category of restricted $V$-modules of finite composition length.
It is known that  $\mathcal{C}_V$ admits a tensor category structure under appropriate assumptions \cite{CKM24, H08, McR24, CMHFY26}.
Given $W_1, W_2 \in \mathcal{C}_V$, a natural problem is to determine the internal Hom and find an explicit construction.

Let $V$ be a regular vertex operator algebra. The module $\Delta(W^1, W^2)$ constructed in \cite{Li98} 
realizes the internal Hom in $\mathcal{C}_V$, although this terminology was not used there.
We briefly recall the construction.  
Let $W^1, W^2 \in \mathcal{C}_V$. 
Li first established in \cite{Li98} that the space  $\G(W^1, W^2)$ 
(see our definition \ref{G-inter}) of generalized intertwining operators between $W^1$ and $W^2$
carries a natural weak $V$-module structure. 
He then define $\Delta(W^1, W^2)$ as the sum of all ordinary $V$-submodules of $\G(W^1, W^2)$.
Under the assumptions that $V$ is rational and the fusion rules among irreducible modules are finite,
Li further showed that $\Delta(W^1, W^2)$ is also an ordinary $V$-module (hence $\H(W^1, W^2) \in \mathcal{C}_V$)
and satisfies a canonical universal property. 
Together with the universal property of the tensor product, 
this ensures that $\Delta(W^1,W^2)$ is exactly the internal Hom object in $\mathcal{C}_V$.

One main objective of this paper is to generalize the relevant results in \cite{Li98} to the general non-rational framework. For this purpose,  the most straightforward approach would be extending Li’s construction of $\Delta(W^1, W^2)$ to the logarithmic setting. In this paper, however, we adopt a different construction and denote the resulting object by  $\mathcal{H}(W^1, W^2)$. One main advantage is that we do not need to first verify the $V$-module structure of $\G(W^1, W^2)$,
since this argument is rather lengthy \cite{Li96, Li98, DLXY24}.
Moreover, this construction helps establish an isomorphism between  $\mathcal{H}\big(W^1, (W^2)^\prime \big)$ 
and the space $W^1 \pzbox_{P(z_0)} W^2$, recently constructed by Du and Huang.

Let $V$ be a vertex operator algebra, and let $W^1, W^2$ be restricted $V$-modules.
We define 
\begin{align*}
	\mathcal{H}(W_1, W_2) & = 
    \left\{ \Y_A(a, t) \ \big| \ A  \ \text{is a restricted $V$-module}, \; \; a \in A, \  \Y_A \in \mathcal{I}\binom{W_2}{A\ W_1} \right\}\\
	          & \subset \text{Hom}_{\C} \big(W_1,  W_2[\log t] \{t\} \big).
\end{align*}
For $u \in V, n \in \Z$, and $\Y_{A}(a, t) \in  \mathcal{H}(W^1,W^2)$, 
we define
\[
u^{\mathcal{H}}_n  \Y_{A}(a, t) =  \Y_A(v_na, t) \in \mathcal{H}(W^1,W^2).
\]
Since each $A$ is a restricted $V$-module, $\H(W^1, W^2) $ carries a natural $V$-module structure under this action.
We further define a linear map
\[
\Y^{\H}(-,x): \H(W^1,W^2) \otimes W^1 \to W^2 [\log x] \{x\}  
\]
by
\[
\Y^{\H} \big(\Y_A(a, t), x \big)w^1=\Y_A(a, x)w^1.
\]

Our first main result is:

\begin{mainthm} 
Under the above definition,  $\mathcal{H}(W_1,W_2)$ forms a generalized $V$-module, and 
$\mathcal{Y}^{\mathcal{H}}$ is an intertwining operator of type 
$ \binom{W_2}{\mathcal{H}(W_1, W_2)\ W_1}$. 
Moreover, $\big( \mathcal{H}(W_1,W_2),  \mathcal{Y}^{\mathcal{H}} \big)$ is characterized by a certain universal property.
In particular, $  \mathcal{H}(W_1,W_2)$ coincides with $ \Delta(W_1,W_2)$ in the non-logarithmic case.

\end{mainthm}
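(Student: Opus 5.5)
The plan is to realize $\mathcal{H}(W^1,W^2)$ as a quotient of a direct sum of the modules $A$, so that every module axiom is inherited from the $A$'s. The main point to settle first is that $u^{\mathcal{H}}_n$ is well defined: if $\Y_A(a,t)=\Y_B(b,t)$ as elements of $\Hom_{\C}(W^1,W^2[\log t]\{t\})$, then $\Y_A(u_na,t)=\Y_B(u_nb,t)$ must hold. I would get this from the Jacobi identity for intertwining operators, taking $\mathrm{Res}_{x_0}x_0^n$. This gives the iterate formula
\begin{align*}
\Y_A(u_na,x_2)w^1=\mathrm{Res}_{x_1}\Big[(x_1-x_2)^nY_{W^2}(u,x_1)\Y_A(a,x_2)w^1-(-x_2+x_1)^n\Y_A(a,x_2)Y_{W^1}(u,x_1)w^1\Big].
\end{align*}
The right-hand side involves only the map $\Y_A(a,\cdot)$ and the module structures on $W^1$ and $W^2$, not $A$ itself. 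Hence $u_n^{\mathcal{H}}$ depends only on the element of $\mathcal{H}(W^1,W^2)$, as required.

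With this in hand, the remaining structure comes from direct sums. First, $\mathcal{H}(W^1,W^2)$ is a subspace: for $a\in A$ and $b\in B$,
\[
\Y_A(a,t)+\Y_B(b,t)=\Y_{A\oplus B}\big((a,b),t\big),
\]
and scalar multiples are clear. To avoid set-theoretic issues, I would let $A$ range over a set of representatives, for instance over subquotients of a fixed large module, or simply note that everything takes place inside the fixed space $\Hom_{\C}(W^1,W^2[\log t]\{t\})$.

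For each pair $(A,\Y_A)$ the map $\pi_A\colon A\to\mathcal{H}(W^1,W^2)$, $a\mapsto\Y_A(a,t)$, is linear and intertwines $u_n$ with $u_n^{\mathcal{H}}$ by definition. Summing these gives a surjection from $\bigoplus A$ onto $\mathcal{H}$ that commutes with all $u_n$. The Jacobi identity, the vacuum property and truncation ($u_nh=0$ for $n\gg0$) therefore descend from $\bigoplus A$. So does the $L(0)$ generalized eigenspace decomposition, since images of generalized eigenvectors are generalized eigenvectors. This makes $\mathcal{H}$ a generalized $V$-module.

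Next I would check that $\Y^{\mathcal{H}}$ is an intertwining operator. It is well defined by construction, because $\Y^{\mathcal{H}}(h,x)$ is just $h$ with $t$ replaced by $x$. For $h=\Y_A(a,t)$, the $L(-1)$-derivative property and the Jacobi identity for $\Y^{\mathcal{H}}$ are literally those of $\Y_A$ evaluated at $a$, since $u_n^{\mathcal{H}}h=\Y_A(u_na,t)$. The one caveat is that the formal substitution $t\mapsto x$ has to respect $\log$ terms and derivatives, which is routine.

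The universal property I would prove is the following. For every restricted (generalized) module $A$ and every $\Y_A\in\mathcal{I}\binom{W^2}{A\ W^1}$, there is a unique $V$-homomorphism $f\colon A\to\mathcal{H}(W^1,W^2)$ with
\[
\Y^{\mathcal{H}}\big(f(a),x\big)=\Y_A(a,x).
\]
Existence holds with $f=\pi_A$. Uniqueness holds because $\Y^{\mathcal{H}}(h,x)$ determines $h$, so $\Y^{\mathcal{H}}$ is injective in its first argument.

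Finally, for the comparison with Li's $\Delta(W^1,W^2)$ in the non-logarithmic case, I would identify $\mathcal{H}$ inside $\G(W^1,W^2)$ via $t\mapsto x$. There are two inclusions to show.
\begin{itemize}
\item[(i)] The image of an ordinary module $A$ under $\pi_A$ is an ordinary submodule of $\G$, so $\mathcal{H}\subset\Delta$. This uses the iterate formula above, which shows that the $V$-action on $\mathcal{H}$ agrees with Li's action on $\G$.
\item[(ii)] Any ordinary submodule $M\subset\G$ carries the tautological intertwining operator $\phi\mapsto\phi(x)$. This is Li's result that $\G$ is a weak module whose evaluation map satisfies the Jacobi identity. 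Hence $M\subset\mathcal{H}$.
\end{itemize}

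I expect the main obstacle to be the well-definedness step together with the identification of the $V$-actions in (i). This requires the $\mathrm{Res}_{x_1}$ formula to hold with logarithmic terms and non-integral powers, so the formal calculus there has to be handled carefully.
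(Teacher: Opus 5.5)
Your proposal is correct and follows essentially the paper's route. You establish well-definedness of $u^{\mathcal{H}}_n$ through the residue (iterate) formula, and you inherit the module axioms and the Jacobi identity for $\mathcal{Y}^{\mathcal{H}}$ from $A$ and $\mathcal{Y}_A$; your surjection $\bigoplus A\to\mathcal{H}(W^1,W^2)$ is just a tidier packaging of the paper's direct computation. Existence comes from $a\mapsto\mathcal{Y}_A(a,t)$ with uniqueness from injectivity of $\mathcal{Y}^{\mathcal{H}}$, and the two inclusions with $\Delta(W^1,W^2)$ use Li's Theorem~\ref{Th-G}. The only thing left implicit is what makes the property a \emph{characterization}: each $\pi_A(A)$ is a quotient of a restricted module, so $\mathcal{H}(W^1,W^2)$ is a sum of restricted submodules, and your existence/uniqueness argument then works verbatim for such test objects. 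That is exactly what makes $(\mathcal{H}(W^1,W^2),\mathcal{Y}^{\mathcal{H}})$ the initial object of the paper's category $\mathcal{D}(W^1,W^2)$ (Theorems~\ref{interHom1} and~\ref{Universal-H}).
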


Note that the definition only guarantees that $\mathcal{H}(W^1, W^2)$ is a generalized $V$-module, while it is not clear whether it is grading-restricted. Establishing the restrictiveness of $\mathcal{H}(W^1, W^2)$ under suitable assumptions is therefore essential. First, in order to relate $\mathcal{H}(W^1, W^2)$ to the tensor product $W^1 \boxtimes (W^2)'$, we make use of the operator $A_0\Omega_0\mathcal{Y}^{\mathcal H}$, which is well defined only when $\mathcal{H}(W^1, W^2)$ is a restricted module. Second, to show that $\mathcal{H}(W^1, W^2)$ realizes the internal Hom object in the category $\mathcal C_V$, one must first verify that $\mathcal{H}(W^1, W^2)$ itself belongs to $\mathcal C_V$.

Since tensor products exist and remain $C_1$-cofinite under the $C_1$-cofinite condition \cite{M14, H25C1, YZ26}, together with the relation between $\Y^\H$ and $\mathcal{Y}^\boxtimes$, we derive the second main result of this paper:

\begin{mainthm}\label{mainthmB}
Let $W^1$ and $ (W^2)^\prime $ be $C_1$-cofinite restricted $V$-modules. Then
\begin{enumerate}[{(1)}]
 
\item  $\mathcal{H}(W^1, W^2)'$ is also a $C_1$-cofinite restricted $V$-module and isomorphic to $W^1 \boxtimes (W^2)'$.
In particular $\H(W^1, W^2)$ itself is a restricted $V$-module. 

\item For any restricted $V$-module $W$, the spaces of intertwining operators  $ \mathcal{I}\binom{W^2}{W \; W^1} $ and   $ \mathcal{I}\binom{W}{W^1 \; (W^2)^\prime} $ are both finite-dimensional. 

\end{enumerate}

\end{mainthm}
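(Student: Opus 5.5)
The plan is to compare $\H(W^1,W^2)$ with the tensor product $W^1\boxtimes (W^2)'$, which exists and is a $C_1$-cofinite restricted $V$-module because $W^1$ and $(W^2)'$ are $C_1$-cofinite \cite{M14, H25C1, YZ26}. Write $T=W^1\boxtimes (W^2)'$ with universal intertwining operator $\Y^\boxtimes$ of type $\binom{T}{W^1\ (W^2)'}$. Its contragredient $T'$ is restricted. Applying the $A$- and $\Omega$-operations (skew-symmetry and contragredient) to $\Y^\boxtimes$ gives an intertwining operator $\Y_{T'}:=A_0\Omega_0\Y^\boxtimes$, or the appropriate composite, of type $\binom{W^2}{T'\ W^1}$. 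By the definition of $\H$, the assignment $\phi\colon T'\to \H(W^1,W^2)$, $b\mapsto \Y_{T'}(b,t)$, is then a $V$-module map.

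First I show $\phi$ is surjective. Let $\Y_A(a,t)\in\H(W^1,W^2)$ with $A$ restricted and $\Y_A\in\mathcal I\binom{W^2}{A\ W^1}$. Applying the inverse operations to $\Y_A$ gives an intertwining operator of type $\binom{A'}{W^1\ (W^2)'}$. Here the restrictedness of $A$ is what makes $A_0\Omega_0$ well defined and invertible. The universal property of $T$ yields a module map $f\colon T\to A'$, and its transpose $f'\colon A\to T'$ satisfies $\Y_A(a,t)=\Y_{T'}(f'(a),t)$. Hence $\Y_A(a,t)=\phi(f'(a))$.

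Next I show $\phi$ is injective. If $\Y_{T'}(b,x)=0$ on $W^1$, then pairing with $(W^2)'$ shows that $b$ annihilates the image of $\Y^\boxtimes$. Since $T$ is spanned by the coefficients of $\Y^\boxtimes(w^1,x)w'$ (a standard property of the $P(z)$-tensor product), this forces $b=0$. So $\H(W^1,W^2)\cong T'$ is restricted, and $\H(W^1,W^2)'\cong T''\cong T=W^1\boxtimes(W^2)'$ is $C_1$-cofinite. This proves (1).

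For (2), I first reduce the two spaces to each other. Using $A_0\Omega_0$ and the fact that $\Y^{\H}$ is universal, I get
\[
\mathcal I\tbinom{W^2}{W\ W^1}\cong \Hom_V\big(W,\H(W^1,W^2)\big)\cong \Hom_V\big(T,W'\big),
\]
and similarly $\mathcal I\binom{W}{W^1\ (W^2)'}\cong\Hom_V(T,W)$. Part (1) then reduces both statements to showing that $\Hom_V(T,M)$ is finite-dimensional for a restricted $M$. Since $T$ is $C_1$-cofinite, it is finitely generated, by a finite-dimensional subspace $S$ complementing $C_1(T)$. A homomorphism is determined by the images of $S$, and these lie in finitely many weight spaces of $M$, each finite-dimensional because $M$ is restricted. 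Hence the Hom space is finite-dimensional.

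I expect the main obstacle to be the surjectivity step. There one must check carefully that the $A_0\Omega_0$ correspondence between $\mathcal I\binom{W^2}{A\ W^1}$ and $\mathcal I\binom{A'}{W^1\ (W^2)'}$ is compatible with the map $f'$ in the logarithmic setting. One must also confirm that the generalized-module structure on $\H$ matches the contragredient structure on $T'$, including the $L(0)$-nilpotent parts.
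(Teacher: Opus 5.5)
Your argument is correct. Part (1) is in substance the ($\Leftarrow$) direction of Theorem~\ref{H-Tensor}. There the paper shows that $\big(T',\Omega_0A_0\Y^{\boxtimes}\big)$, with $T=W^1\boxtimes(W^2)'$, satisfies the universal property of $\H(W^1,W^2)$, and concludes via Theorem~\ref{Universal-H}. You instead verify directly that the induced map $\varphi_{T'}\colon T'\to\H(W^1,W^2)$ is bijective. The ingredients are the same: factoring $A_{-1}\Omega_{-1}\Y_A$ through $\Y^{\boxtimes}$, naturality of $A_r$ and $\Omega_r$ under module maps, and the fact that surjectivity of $\Y^{\boxtimes}$ gives injectivity of $\Omega_0A_0\Y^{\boxtimes}$ in its first argument (Lemma~\ref{Inj-Sur}).

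The genuine departure is that you skip Theorem~\ref{f-dim}. The paper first proves that $\H(W^1,W^2)$ is restricted by bounding the weights and weight-space dimensions of each restricted submodule $W$, using the surjection $T\to W'$ that comes from $A_0\Omega_0\Y^{\H}|_W$. Only afterwards does it identify $\H(W^1,W^2)'$ with $T$. Your route shows that, once $T$ exists, this step is redundant. Restrictedness of $\H(W^1,W^2)$ and $C_1$-cofiniteness of $\H(W^1,W^2)'\cong T$ both follow from $\H(W^1,W^2)\cong T'$. The paper's organization instead isolates the hypothesis-free equivalence of Theorem~\ref{H-Tensor}, with $C_1$-cofiniteness entering only through the existence of $T$.

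In part (2) you follow the paper's finite-generation argument. For the second space you obtain $\mathcal{I}\binom{W}{W^1\;(W^2)'}\cong\Hom_V(T,W)$ directly from the universal property of $T$, rather than passing through $\mathcal{I}\binom{W}{W^1\;W^2}\cong\mathcal{I}\binom{(W^2)'}{W'\;W^1}$. This is slightly more direct.

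Two minor corrections. First, the operator of type $\binom{W^2}{T'\;W^1}$ is $\Omega_0A_0\Y^{\boxtimes}$, not $A_0\Omega_0\Y^{\boxtimes}$; the latter has type $\binom{(W^1)'}{(W^2)'\;T'}$. Second, your worry about matching the $L(0)$-nilpotent parts is unnecessary. By the definition of the action on $\H(W^1,W^2)$, $\phi(u_nb)=u_n^{\H}\phi(b)$ holds for all $u\in V$ and $n\in\Z$, including $\omega_1=L(0)$. So a bijective $\phi$ is automatically an isomorphism of generalized modules.
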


Let $V$ be a vertex operator algebra for which a restricted $V$-module is $C_1$-cofinite if and only if it has finite composition length.
With this condition, $\mathcal{C}_V$ carries a natural tensor category structure\cite{McR24, CMHFY26}.
Having established the finiteness of $\mathcal{H}(W^1, W^2)$, the universal properties of $\mathcal{H}(-,-)$ and the tensor product immediately yield our third main result:

\begin{mainthm}
Let $W^1, W^2 \in \mathcal{C}_V$. Then $\H(W^1, W^2) $ also lies in $\mathcal{C}_V$, and there is a natural isomorphism
\[
\operatorname{Hom}_{V} \big(X, \; \H(W^1, W^2) \big) \cong \operatorname{Hom}_{V}( X \boxtimes W^1, \; W^2) 
\]
for all $X \in \mathcal{C}_V$.  Consequently, $\mathcal{H}(W^1,W^2)$ is the internal Hom from $W^1$ to $W^2$ in $\mathcal{C}_V$.
\end{mainthm}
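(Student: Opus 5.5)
The plan is to deduce the statement from the two earlier main theorems, so that nothing new about intertwining operators has to be proved here. First I show $\H(W^1,W^2)\in\mathcal{C}_V$. Since $W^1,W^2\in\mathcal{C}_V$ have finite composition length, the standing hypothesis on $V$ makes $W^1$ $C_1$-cofinite. I also need $(W^2)'$ to be $C_1$-cofinite, so I must check that $(W^2)'$ has finite composition length. This holds because contragredient is an exact contravariant functor on restricted modules with $(W^2)''\cong W^2$, and the dual of an irreducible restricted module is irreducible. Theorem \ref{mainthmB}(1) then says that $\H(W^1,W^2)'\cong W^1\boxtimes (W^2)'$ is a $C_1$-cofinite restricted $V$-module, and that $\H(W^1,W^2)$ is restricted. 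Applying the hypothesis on $V$ again, $\H(W^1,W^2)'$ has finite length. Dualizing once more, $\H(W^1,W^2)\cong \H(W^1,W^2)''$ has finite length, so it lies in $\mathcal{C}_V$.

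Next I build the natural bijection. Given $f\in\Hom_V(X,\H(W^1,W^2))$, I set $\Phi(f)=\Y^{\H}\circ(f\otimes 1)$. This is an intertwining operator of type $\binom{W^2}{X\ W^1}$, because precomposing an intertwining operator with a module map gives an intertwining operator. By the universal property of $(X\boxtimes W^1,\Y^{\boxtimes})$, $\Phi(f)$ corresponds to a unique $\bar f\in\Hom_V(X\boxtimes W^1,W^2)$. In the other direction, given $g\colon X\boxtimes W^1\to W^2$, let $\Y_X=g\circ\Y^{\boxtimes}\in\mathcal{I}\binom{W^2}{X\ W^1}$. Since $X$ is restricted, I define $\Psi(g)(x)=\Y_X(x,t)\in\H(W^1,W^2)$. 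This map intertwines the $V$-actions directly from the definition $u^{\H}_n\Y_X(x,t)=\Y_X(u_nx,t)$.

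I then check that the two maps are mutually inverse. One direction is $\Y^{\H}(\Psi(g)(x),x_0)=\Y_X(x,x_0)$, which is the definition of $\Y^{\H}$. For the other direction, given $f$, the element $f(x)$ is some $\Y_A(a,t)$. Then $\Psi(\Phi(f))(x)=\Y^{\H}(f(x),t)=\Y_A(a,t)=f(x)$, as functions $W^1\to W^2[\log t]\{t\}$. This is exactly the universal property of $\H$ asserted in Theorem 1: module maps into $\H$ are determined by, and in bijection with, intertwining operators of type $\binom{W^2}{X\ W^1}$ via composition with $\Y^{\H}$. Composing this bijection with the tensor-product universal property gives the stated isomorphism.

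Naturality in $X$ follows because both bijections are compositions with $\Y^{\H}$ and $\Y^{\boxtimes}$, and these commute with precomposition by a morphism $X_1\to X_2$ (using $h\boxtimes 1$ on tensor products). The conclusion that $\H(W^1,W^2)$ is the internal Hom then follows from the definition. The step I expect to need the most care is the first one: the hypothesis ``$C_1$-cofinite $\iff$ finite length'' must be applied both to $(W^2)'$ and to the dual of $\H$, and $W^1\boxtimes(W^2)'$ is taken in the category of restricted modules. I would verify explicitly that contragredients preserve finite length in the generalized (logarithmic) setting; graded duals of finite-length grading-restricted modules behave well, so this should be routine.
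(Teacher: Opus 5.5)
Your proposal is correct and follows essentially the same route as the paper. You get $\mathcal{H}(W^1,W^2)\in\mathcal{C}_V$ from Theorem \ref{f-dim}, using the $C_1$-cofiniteness of $\mathcal{H}(W^1,W^2)'$, the hypothesis on $V$, and $\mathcal{H}\cong\mathcal{H}''$; you then compose the universal property of $\mathcal{H}(W^1,W^2)$ (Theorem \ref{interHom1}(2)) with that of $X\boxtimes W^1$ and check naturality in $X$ via $f\boxtimes \mathrm{id}_{W^1}$. The only addition is that you explicitly justify why $(W^2)'$ is $C_1$-cofinite (contragredients preserve finite length), which the paper leaves implicit in saying that $\mathcal{C}_V$ is closed under contragredients.
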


Fix a nonzero complex number $z_0$.
We finally clarify the relationship between $\H\big(W^1, (W^2)^\prime \big)$ and the $P(z_0)$-dual product $W^1 \pzbox_{P(z_0)}W^2$.
In \cite{DH25, H26},  the $P(z_0)$-dual product $W_1 \pzbox_{P(z_0)} W_2$  is defined for twisted modules 
as a subspace of $(W_1 \otimes W_2)^*$. 
One of the main motivations for introducing this construction is to realize the tensor product construction.
In this paper, we adopt its special case for untwisted modules.
It was shown in \cite{H26} that if $W_1$ and $W_2$ are $C_1$-cofinite restricted $V$-modules, 
then $W^1 \pzbox_{P(z_0)} W^2$ is also a restricted $V$-module,
and the tensor product $W^1 \boxtimes W^2 $ is isomorphic to $\big( W^1 \pzbox_{P(z_0)} W^2 \big)^\prime$.
Combining this fact with Theorem \ref{mainthmB}, we obtain that for any $C_1$-cofinite restricted $V$-modules $W_1$ and $W_2$, the modules $W^1 \boxtimes_{P(z_0)} W^2$ and $\mathcal{H}\big(W^1, (W^2)^\prime\big)$ are isomorphic. In fact, this isomorphism remains valid without the $C_1$-cofiniteness assumption, as shown in the following theorem:

\begin{mainthm}
Let $W^1,W^2$ be restricted $V$-modules.
Then $\H \big(W^1, (W^2)^\prime \big)$ and $W^1\pzbox_{P(z_0)} W^2$ are isomorphic as generalized $V$-modules.
\end{mainthm}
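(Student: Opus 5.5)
We prove the statement by writing down an explicit linear map $\Phi:\H\big(W^1,(W^2)'\big)\to (W^1\otimes W^2)^*$, showing it is injective, identifying its image with $W^1\pzbox_{P(z_0)}W^2$, and checking that it intertwines the two $V$-actions. For an element $\Y_A(a,t)\in\H\big(W^1,(W^2)'\big)$, with $\Y_A\in\mathcal{I}\binom{(W^2)'}{A\ W^1}$, we set
\[
\Phi\big(\Y_A(a,t)\big)(w^1\otimes w^2)=\big\langle \Y_A(a,z_0)w^1,\, w^2\big\rangle ,
\]
where $z_0$ is substituted using the fixed branch $\log z_0$ of the $P(z_0)$ setting. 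Well-definedness is immediate: $\Y_A(a,t)$ is by definition an element of $\operatorname{Hom}_{\C}\big(W^1,(W^2)'[\log t]\{t\}\big)$, and $\Phi$ depends only on this element. The point is that $\Phi$ depends only on $\Y_A(a,t)$ as a formal series, not on the choice of $A$.

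For injectivity, suppose $\langle \Y_A(a,z_0)w^1,w^2\rangle=0$ for all $w^1,w^2$. We replace $w^1,w^2$ by $L(0)$-generalized-eigenvector components and use the $L(0)$-conjugation formula
\[
\Y_A(a,z_0)=e^{\lambda L(0)}\Y_A\big(e^{-\lambda L(0)}a,\,e^{-\lambda}z_0\big)e^{-\lambda L(0)} .
\]
This shows that the vanishing persists for all values $z=e^{-\lambda}z_0$ and all branches of $\log z$. By the standard independence of the functions $z^{n}(\log z)^k$ for distinct pairs $(n,k)$, every coefficient of $\Y_A(a,t)$ vanishes, so $\Y_A(a,t)=0$.

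Next we show that the image lies in $W^1\pzbox_{P(z_0)}W^2$. We recall the Du--Huang definition: $W^1\pzbox_{P(z_0)}W^2$ consists of those $\mu\in(W^1\otimes W^2)^*$ that generate a generalized $V$-module under the $P(z_0)$-action defined through the delta-function/Jacobi identity description, i.e.\ that satisfy the $P(z_0)$-compatibility and local grading-restriction conditions (the weak form appropriate to the generalized setting). For $\mu=\Phi(\Y_A(a,t))$, the action of $Y(u,x_0)$ is computed by the Jacobi identity for $\Y_A$ evaluated at $x_2=z_0$. This gives
\[
Y_{P(z_0)}(u,x_0)\mu=\Phi\big(\Y_A(Y(u,x_0)a,t)\big),
\]
so $u_n$ acting on $\mu$ agrees with $\Phi(u^{\H}_n\Y_A(a,t))$. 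Compatibility follows, and the submodule generated by $\mu$ is the image under $\Phi$ of the module generated by $a$, which is a quotient of a submodule of a restricted module. Hence $\Phi$ is a $V$-homomorphism with image in the dual product.

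For surjectivity, given $\mu\in W^1\pzbox_{P(z_0)}W^2$ we take $A$ to be the generalized $V$-module generated by $\mu$ inside the dual product. We define $\Y_A$ by
\[
\big\langle \Y_A(\nu,x)w^1,w^2\big\rangle=\nu\big(x^{L(0)}z_0^{-L(0)}w^1\otimes \ldots\big),
\]
that is, we reconstruct the $x$-dependence by $L(0)$-conjugation, the inverse of the substitution $t\mapsto z_0$. We then verify that this is an intertwining operator of type $\binom{(W^2)'}{A\ W^1}$ by reversing the Jacobi-identity computation, so that $\Phi(\Y_A(\mu,t))=\mu$. Since $A$ may fail to be restricted, we use the paper's universal property (the first main theorem) to pass to restricted pieces; if necessary we decompose $\mu$ into finitely many generalized weight components lying in restricted submodules.

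The main obstacle is surjectivity. It requires showing that every element of the dual product arises from an intertwining operator out of a restricted module, and handling the logarithmic and $L(0)$-conjugation correctly when reconstructing $\Y_A(\cdot,x)$ from its value at $z_0$. This is where the precise Du--Huang conditions must be matched against the definition of $\H$.
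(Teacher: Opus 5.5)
Your plan has the same skeleton as the paper's proof: an explicit map, then well-definedness, $V$-linearity, injectivity and surjectivity. However, the map itself is the wrong one, and this is a genuine gap. The naive evaluation $\Phi(\Y_A(a,t))(w^1\otimes w^2)=\langle \Y_A(a,z_0)w^1,w^2\rangle$ places $a$ at $z_0$, $w^1$ at $0$ and $w^2$ at $\infty$. A generator $\boldsymbol{\lambda}_{\Y_W,w'}$ of $W^1\pzbox_{P(z_0)}W^2$ (the paper defines the dual product as the span of such functionals) instead has $w^1$ at $z_0$, $w^2$ at $0$ and $w'$ at $\infty$. In general your functional is not in the dual product.

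Concretely, take $W^1=V$, $W^2=V'$ (so $(W^2)'=V$), $A=V$ and $\Y_A=Y$. Then $\Phi(Y(a,t))(\mathbf{1}\otimes w^2)=\langle e^{z_0L(-1)}a,w^2\rangle$. On the other side, let $M$ be restricted and $\Y_M\in\mathcal{I}\binom{M}{V\ V'}$. The $L(-1)$-derivative property and the Jacobi identity make $\Y_M(\mathbf{1},x)=f$ a constant $V$-homomorphism $V'\to M$. Hence $\boldsymbol{\lambda}_{\Y_M,m'}(\mathbf{1}\otimes w^2)=\langle f^*m',w^2\rangle$ with $f^*m'\in V''=V$, so every $\mu\in V\pzbox_{P(z_0)}V'$ has $\mu(\mathbf{1}\otimes -)$ represented by an element of $V$. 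For the Heisenberg vertex operator algebra and $a=h(-1)\mathbf{1}$, one gets $e^{z_0L(-1)}a=\sum_{n\geq 0}z_0^n h(-n-1)\mathbf{1}$. This has infinitely many nonzero homogeneous components, so $\Phi(Y(a,t))\notin V\pzbox_{P(z_0)}V'$. (A similar computation, using that $\Omega_{-1}\Y_A(\mathbf{1},x)$ is a homomorphism $A\to V$, shows $\boldsymbol{\lambda}_{Y_{V'},a}$ is not in the image of your $\Phi$ either.)

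The error sits in your claim that the Jacobi identity for $\Y_A$ at $x_2=z_0$ gives $Y_{P(z_0)}(u,x_0)\mu=\Phi(\Y_A(Y(u,x_0)a,t))$. That identity lets $V$ act on $w^1$ near $0$ and on $w^2$ near $\infty$. The $P(z_0)$-action has $V$ acting on $w^1$ near $z_0$ and on $w^2$ near $0$. Your surjectivity sketch has the same mismatch: reconstructing the $x$-dependence of $\mu$ by $L(0)$-conjugation puts $\mu$ in the output slot, i.e.\ gives something of type $\binom{A'}{W^1\ W^2}$, not an intertwining operator of type $\binom{(W^2)'}{A\ W^1}$.

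The missing idea is to move to the correct configuration with $A_0\Omega_0$ before evaluating. Put $\Phi(\Y_A(a,t))=\boldsymbol{\lambda}_{A_0\Omega_0\Y_A,\,a}$ with $a\in A=A''$, that is,
\[
\Phi(\Y_A(a,t))(w^1\otimes w^2)=\Big\langle w^2,\ e^{x^{-1}L(-1)}\Y_A\big(a,e^{\pi\mathrm{i}}x^{-1}\big)e^{xL(1)}\big(e^{\pi\mathrm{i}}x^{-2}\big)^{L(0)}w^1\Big\rangle\Big|_{x=z_0}.
\]
With this map every step is short:
\begin{enumerate}[(1)]
\item Well-definedness is read off from the displayed formula.
\item $V$-linearity holds because both module structures act on the slot occupied by $a$.
\item Surjectivity is immediate from $\Phi\big((\Omega_{-1}A_{-1}\Y_W)(w',t)\big)=\boldsymbol{\lambda}_{\Y_W,w'}$, where $W'$ is restricted. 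There is then no need to build intertwining operators out of the possibly non-restricted module generated by $\mu$, a step your sketch never actually carries out.
\item Injectivity follows from Lemma~\ref{keylemma} together with Lemma~\ref{Inj-Sur}(1).
\end{enumerate}

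Your injectivity argument also fails as written. $L(0)$-conjugation replaces $a$ by $e^{-\lambda L(0)}a$, so it does not show that the same functional vanishes at other points. What is needed is an $L(-1)$-derivative argument in a slot ranging over a whole module. Lemma~\ref{keylemma} supplies exactly this for the first argument $w^1$ of $A_0\Omega_0\Y_A$.
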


This paper is organized as follows: 
Section \ref{sec2} recalls fundamental concepts related to restricted $V$-modules, as well as their contragredient modules,
and the properties of $C_1$-cofinte modules.
Section \ref{sec3} recalls the definition of intertwining operators and collects several auxiliary lemmas needed later.  
In Section \ref{sec4}, we introduce our main construction $\mathcal{H}(W^1, W^2)$ and establish its fundamental properties.  
Several explicit examples are supplied, and we show that this construction coincides with the natural logarithmic generalization of Li’s module 
$\Delta(W^1, W^2)$.
Section \ref{sec5} is devoted to investigating finiteness properties under the $C_1$-cofiniteness condition.  
We prove that $\mathcal{H}(W^1, W^2)'$ is isomorphic to $ W^1 \boxtimes (W^2)'$ and that the corresponding fusion rules are finite. 
Moreover, under suitable hypotheses, we show that $\mathcal{H}(W^1, W^2)$ is exactly the internal Hom object in the tensor category of restricted 
$V$-modules.  
In Section \ref{sec6}, we establish a canonical isomorphism between $\mathcal{H}(W^1, (W^2)')$ and the $P(z_0)$-dual product $ W^1 \pzbox_{P(z_0)} W^2$ constructed by Du and Huang.
    
\section{Preliminaries}\label{sec2}

    Throughout this paper, the symbols $x,x_0,x_1,x_2,y,t$ stand for pairwise commuting formal variables, whereas $z$ is a complex variable and $z_0$ stands for a nonzero complex number.
    
	We adopt the standard framework of vertex operator algebras as presented in \cite{FLM88, LL04}, 
	using the notation $(V, Y(-, x), \mathbf{1}, \omega)$ for a vertex operator algebra. 
	By a {\bf weak $V$-module}, we mean a vector space $W$ equipped with a linear map
	$Y_W(-,x)-: V \otimes W \to W((x))$
	satisfying $Y_W(\mathbf{1}, x) = \operatorname{id}_W$ and the Jacobi identity.
	  	
	Let $W$ be a weak $V$-module.
	For any $\lambda \in \mathbb{C}$, let $W_{(\lambda)}$ denote the generalized $\lambda$-eigenspace of the action of $L(0)$ on $W$.
	When $W_{(\lambda)}$ is nonzero, $W_{(\lambda)}$ is also called a {\bf weight space} of $W$, and $\lambda$ is called a {\bf weight} of $W$.
\begin{definition}

\begin{enumerate}[{(1)}]

\item A {\bf generalized $V$-module} is a weak $V$-module $W$ equipped with the weight space decomposition
\[
W = \bigoplus_{\lambda \in \mathbb{C}} W_{(\lambda)}.
\]

\item A {\bf lower-truncated generalized $V$-module} is a generalized $V$-module $W$ such that 
for every $\lambda \in \mathbb{C}$, $W_{(\lambda + n)}=0$ for all sufficiently negative integers $n$. 

\item A {\bf grading restricted generalized  $V$-module} (or simply a {\bf restricted $V$-module}) is a 
lower-truncated generalized $V$-module whose weight spaces are all finite-dimensional.

\item  An {\bf ordinary $V$-module} is a restricted $V$-module on which $L(0)$ acts semisimply. 
\end{enumerate}

\end{definition}

This paper also involves standard concepts for vertex operator algebras, such as 
$C_2$-cofiniteness and rationality; we refer the reader to \cite{Z96,DLM97} for these notions.
    
\begin{remark}
The definition of ordinary $V$-module here is the same as in \cite{DLM97, ABD04}.
Although some recent papers (e.g., \cite{DRX17,DNR25,DRX24}) refer to such modules simply as $V$-modules.
\end{remark}

Let $W$ be a  generalized $V$-module. 
A subspace $U \subseteq W$ is said to be {\bf homogeneous} if 
\[
U = \bigoplus_{\lambda \in \mathbb{C}} \big( U \cap W_{(\lambda)} \big).
\]
For a homogeneous subspace $U$, we define the set
\[
\wt (U) =\left\{ \lambda \in \C \   \big| \  U \cap W_{(\lambda)} \neq 0   \right\}.
\]
We remark that a homogeneous subspace need not be invariant under $L(0)$.
	
\begin{definition}
	A weak $V$-module $W$ is said to be $C_1$-cofinite if the quotient space $W/C_1(W)$ is finite-dimensional,
	where
	\[
	C_1(W)=\operatorname{span}\left\{ v_{-1}w \ | \ v \in V,    \  \wt (v) >0,  \ w\in W  \right\}.
	\]	      
\end{definition}
Note that for any generalized $V$-module $W$, the subspace $C_1(W)$ is homogeneous. Hence $C_1(W)$ has a homogeneous complement in $W$. 
The following result is standard.
	
\begin{lemma} \label{C1-lemma}
    Let $W$ be a $C_1$-cofinite, lower-truncated generalized $V$-module. 
    Let $E_W$ be a homogeneous complement of $C_1(W)$ in $W$.
    Then 
    \[
    W= \operatorname{span} 
    \left\{m, \; v^{1}_{-1} \cdots v^{s}_{-1} w \; \big| \; v^i \in V, \; \wt (v^i) >0, \; i=1, \cdots ,s, \; s \geq 1, \; m, w\in E_W  \right\}.
    \]
    Let $ \wt (E_W) =\{\lambda_1, \cdots, \lambda_n \}.$
    Then 
    \[
     \operatorname{wt}(W) \subset \bigcup_{i=1}^n (\lambda_i + \mathbb{N}).
    \]
    In particular, $W$ is a finitely generated restricted $V$-module.
\end{lemma}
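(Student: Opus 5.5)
We will prove the spanning statement by induction on weight, using lower-truncation, and then read off the weight containment and finiteness from it. Let $S$ denote the span on the right-hand side. Since $E_W$ and $C_1(W)$ are homogeneous and $W = E_W \oplus C_1(W)$, it suffices to show that every homogeneous vector $w \in W_{(\mu)}$ lies in $S$. Fix a coset $\mu + \mathbb{Z}$. By lower-truncation there is a minimal real part among weights of $W$ in this coset, so we may induct on $\operatorname{Re}\mu$ within the coset; equivalently, we induct on $n$ where $\mu = \mu_0 + n$ with $W_{(\mu_0 - k)} = 0$ for all $k>0$.

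For the inductive step, write $w = e + c$ with $e \in E_W \cap W_{(\mu)}$ and $c \in C_1(W) \cap W_{(\mu)}$. Here we use homogeneity of $C_1(W)$: the homogeneous components of an element of $C_1(W)$ again lie in $C_1(W)$, and they are sums of homogeneous components of the elements $v_{-1}w'$. Since $V$ is graded by $L(0)$-eigenvalues, we may take $v$ homogeneous of weight $\wt(v) > 0$. For $w' \in W_{(\nu)}$, the element $v_{-1}w'$ lies in $W_{(\nu + \wt v)}$, because $[L(0), v_{-1}] = (\wt v)\, v_{-1}$ and so $v_{-1}$ maps generalized eigenspaces to generalized eigenspaces. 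Hence $c = \sum_j v^j_{-1} w_j$ with $w_j \in W_{(\mu - \wt v^j)}$ and $\wt v^j \in \mathbb{Z}_{>0}$ (for a VOA, weights are integral). Each $w_j$ has strictly smaller weight in the same coset, so by induction $w_j \in S$. Applying $v^j_{-1}$ to an element of $S$ again gives an element of $S$: it prepends a factor, turning $m$ into $v_{-1}m$ and $v^1_{-1}\cdots w$ into $v_{-1}v^1_{-1}\cdots w$. The base case is the minimal weight, where $C_1(W)$ has no component, since every contribution $v_{-1}w'$ would require $w'$ of lower weight; there $w = e \in E_W$.

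The weight statement follows directly. The vector $v^1_{-1}\cdots v^s_{-1}w$ with $w \in E_W \cap W_{(\lambda_i)}$ has weight $\lambda_i + \sum \wt v^k \in \lambda_i + \mathbb{N}$. Since $E_W \cong W/C_1(W)$ is finite-dimensional, $\wt(E_W)$ is a finite set, so $\wt(W) \subset \bigcup_i (\lambda_i + \mathbb{N})$.

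For the finiteness, $W$ is generated by the finite-dimensional space $E_W$. Then $W_{(\mu)}$ is spanned by the vectors above of total weight $\mu$. For each $i$, and for each composition $\mu - \lambda_i = n_1 + \cdots + n_s$ with $n_k \geq 1$, the vectors $v^k$ range over the finite-dimensional spaces $V_{(n_k)}$. This gives finitely many finite-dimensional contributions, so $\dim W_{(\mu)} < \infty$. Lower truncation was assumed, so $W$ is a finitely generated restricted module.

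The main subtle point is the inductive step. There we must justify that the homogeneous components of elements of $C_1(W)$ are sums of $v_{-1}w'$ with homogeneous $v$ and $w'$, which follows from the $L(0)$-commutation relation above. We must also justify that induction within a coset is well-founded, which follows from lower truncation.
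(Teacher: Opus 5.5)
Your proof is correct. Strong induction on weight within each coset $\mu+\mathbb{Z}$ is well founded by lower truncation, and $[L(0),v_{-1}]=(\wt v)\,v_{-1}$ shows that $C_1(W)\cap W_{(\mu)}$ is spanned by the $v_{-1}w'$ with $w'\in W_{(\mu-\wt v)}$, which is exactly what your inductive step needs.

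The paper gives no proof of this lemma; it only calls the result standard. Your argument is the standard one, and you correctly base the finite-dimensionality of weight spaces on the finiteness of $\wt(E_W)$, the finitely many compositions of $\mu-\lambda_i$, and $\dim V_{(n)}<\infty$.
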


\begin{lemma} \label{C1-exact}
Let 
\[
0 \to W^1 \xrightarrow{f}  W^2   \xrightarrow{g}   W^3  \to 0
\]
be an exact sequence of lower-truncated generalized $V$-modules.
Then the following statements hold.
\begin{enumerate}[{(1)}]
    \item If $W^2$ is $C_1$-cofinite, then $W^3$ is also $C_1$-cofinite;
    \item If $W^1, W^3$ are both $C_1$-cofinite, then $W^2$ is also $C_1$-cofinite.
\end{enumerate}
\end{lemma}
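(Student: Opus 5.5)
Both parts reduce to comparing the quotients by $C_1$ along the exact sequence. The key observation is that $f$ and $g$ are $V$-module maps, so they commute with the operators $v_{-1}$, and therefore
\[
f\big(C_1(W^1)\big)\subseteq C_1(W^2), \qquad g\big(C_1(W^2)\big)= C_1(W^3).
\]
The equality in the second relation uses the surjectivity of $g$: any element $v_{-1}w^3$ with $\wt(v)>0$ can be written as $g(v_{-1}w^2)$, where $w^2$ is a preimage of $w^3$.

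\emph{Part (1).} The map $g$ induces a surjection
\[
W^2/C_1(W^2)\twoheadrightarrow W^3/C_1(W^3).
\]
The left side is finite-dimensional by hypothesis, so the right side is as well. Lower truncation is not needed for this step.

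\emph{Part (2).} Set $Q_i=W^i/C_1(W^i)$. I would construct a sequence
\[
Q_1\xrightarrow{\bar f} Q_2\xrightarrow{\bar g} Q_3\to 0
\]
and show it is exact at $Q_2$. Surjectivity of $\bar g$ is immediate. For exactness at $Q_2$, take $w\in W^2$ with $g(w)\in C_1(W^3)$. Write $g(w)=\sum_j v^j_{-1}u^j$ with $u^j\in W^3$, choose preimages $\tilde u^j\in W^2$, and set
\[
w'=w-\sum_j v^j_{-1}\tilde u^j .
\]
Then $w'\in\ker g=f(W^1)$, so $w\in f(W^1)+C_1(W^2)$. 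This gives
\[
\dim Q_2\le \dim Q_1+\dim Q_3<\infty,
\]
so $W^2$ is $C_1$-cofinite.

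\emph{Main obstacle.} The only step needing care is the exactness at $Q_2$. It works because the defining relations of $C_1(W^3)$ lift along $g$. The hypothesis that the modules are lower-truncated generalized $V$-modules guarantees that all the objects fall within the framework where Lemma \ref{C1-lemma} applies afterward. The cofiniteness argument itself is purely linear-algebraic and does not depend on it.
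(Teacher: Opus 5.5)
Your proof is correct and is the standard one. The paper gives no argument of its own here; it only cites Lemmas 2.10 and 2.11 of [H09], which rest on the same elementary comparison of the quotients $W^i/C_1(W^i)$ along the sequence: $g(C_1(W^2))=C_1(W^3)$ for part (1), and for part (2) the decomposition $W^2=f(W^1)+(\text{lifts of a complement of } C_1(W^3))+C_1(W^2)$ combined with $f(C_1(W^1))\subseteq C_1(W^2)$. Your remark that lower truncation plays no role in the dimension count is also accurate.
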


\begin{proof}
See \cite[Lemma 2.10 and 2.11]{H09}.
\end{proof}

The following well-known results, which do not usually appear together in a single reference, 
are collected here (see, e.g., \cite{ABD04, M04, H09, McR26}).
It shows that the results of this paper, which are established under the $C_1$-cofiniteness condition, apply naturally to restricted modules over $C_2$-cofinite vertex operator algebras.

\begin{lemma}\label{lem:C2-C1}
Let $V$ be a $C_2$-cofinite vertex operator algebra without negative conformal weights.
Then every weak $V$-module is automatically a lower-truncated generalized $V$-module (see, e.g., \cite{ABD04, M04}).
Let $W$ be a lower-truncated generalized  $V$-module. Then the following statements are equivalent:
\begin{enumerate}[{(1)}]
    \item $W$ is restricted;
    \item $W$ is of finite composition length;
    \item $W$ is $C_1$-cofinite;
    \item $W$ is finitely generated.
\end{enumerate}

\end{lemma}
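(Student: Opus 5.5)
The plan is to prove the four conditions equivalent by going around the cycle (1)$\Rightarrow$(3)$\Rightarrow$(4)$\Rightarrow$(2)$\Rightarrow$(1). Two earlier results do much of the work: Lemma \ref{C1-lemma}, which gives (3)$\Rightarrow$(4) directly, and Lemma \ref{C1-exact}, which handles extensions. I will also use two standard facts about $C_2$-cofinite $V$ without negative weights. First, $V$ has only finitely many irreducible (weak) modules up to isomorphism, and each is an ordinary module with finite-dimensional weight spaces. Second, the first claim of the lemma: every weak module is lower-truncated generalized. For the latter I would simply cite \cite{ABD04, M04}. The proof there uses that Zhu's algebra $A(V)$, and more generally the higher Zhu algebras $A_n(V)$, are finite-dimensional, together with Miyamoto's spanning set $W=\operatorname{span}\{v^1_{-n_1}\cdots v^k_{-n_k}w\}$ where $n_1>\cdots>n_k>0$, the $v^i$ run over a complement of $C_2(V)$, and the $w$ are generators.

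(1)$\Rightarrow$(3): I first show that irreducible modules are $C_1$-cofinite. This follows from the spanning set just mentioned, since for a $C_2$-cofinite $V$ the quotient $W/C_1(W)$ is spanned by finitely many low-weight vectors. Miyamoto proves more generally that every finitely generated module over a $C_2$-cofinite $V$ is $C_1$-cofinite, and in fact satisfies the stronger $C_2$-cofiniteness. Now let $W$ be restricted. The lower-truncation and finite-dimensional weight spaces let me peel off irreducible subquotients. The key point is that only finitely many are needed: lowest weights of irreducibles lie in a finite set, and each weight space is finite-dimensional. To make this precise, I take the submodule generated by the finitely many weight spaces $W_{(\lambda)}$ with $\operatorname{Re}\lambda$ below a bound $N$ exceeding the real parts of all lowest weights of irreducibles (one bound per coset $\lambda+\Z$). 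Every irreducible subquotient has its lowest weight space inside this generated part, and each such weight space occurs with multiplicity bounded by $\dim W_{(\lambda)}$. This gives finite length, and Lemma \ref{C1-exact}(2) with induction on length then gives (3).

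(3)$\Rightarrow$(4) is Lemma \ref{C1-lemma}. For (4)$\Rightarrow$(2), a finitely generated module is a quotient of a finite sum of modules generated by single vectors. A cyclic module $\langle w\rangle$ is spanned by Miyamoto's PBW-type set, so it has finite-dimensional weight spaces and is lower-truncated. Equivalently, it is a quotient of a generalized Verma module induced from a finite-dimensional $A_n(V)$-module, and such a module has finite length. Hence finitely generated implies finite length. Finally, (2)$\Rightarrow$(1) holds because an extension of restricted modules is restricted, since weight-space dimensions add, and irreducibles are restricted. Induction on composition length finishes the cycle.

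The main obstacle is (4)$\Rightarrow$(2) and (1)$\Rightarrow$(3), that is, the finiteness of cyclic modules over a $C_2$-cofinite $V$. This rests on Miyamoto's spanning set, or equivalently on the finite-dimensionality of $A_n(V)$ together with Huang's result that finitely generated lower-truncated modules over such $V$ have finite length \cite{H09}. I would cite these rather than reprove them, consistent with the paper's remark that the lemma is a collection of known results.
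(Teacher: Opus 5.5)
Your proposal is correct and uses the same ingredients as the paper: finiteness of irreducibles plus a count of weight-space dimensions at their lowest weights for restricted $\Rightarrow$ finite length, the Buhl--Miyamoto spanning set for irreducible $\Rightarrow$ $C_1$-cofinite and cyclic $\Rightarrow$ restricted, Lemma~\ref{C1-exact}(2) for extensions, and Lemma~\ref{C1-lemma} for (3)$\Rightarrow$(4); you have only reordered them around the cycle, whereas the paper runs (1)$\Rightarrow$(2)$\Rightarrow$(3)$\Rightarrow$(4)$\Rightarrow$(1). The paper makes your counting step precise with $d_W=\sum_i \dim W_{(\lambda_i)}$ and induction on $d_W$, so your detour through the submodule generated by low-weight spaces, and the appeal to generalized Verma modules or \cite{H09} in (4)$\Rightarrow$(2), are unnecessary: spanning set gives restricted, and your own count then gives finite length.
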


\begin{proof}
$ (1) \Rightarrow (2)$ \
It is well known that $V$ admits only finitely many mutually non-isomorphic irreducible ordinary $V$-modules; see, e.g., \cite{DLM00}.  
Let $S_1, S_2,\dots, S_m$ be a complete set of representatives of these isomorphism classes, and $\lambda_1,\lambda_2,\dots,\lambda_m$ be their conformal weights, respectively.

For a restricted $V$-module $W$, define a non-negative integer $d_W$ by
\[
d_W=\dim W_{(\lambda_1)} + \cdots +\dim W_{(\lambda_m)}.
\]
Since every lower-truncated generalized  $V$-module has an irreducible ordinary subquotient, it follows that such a module $W$ is zero if and only if $d_W=0$.
Now, by induction on $d_W$, one can readily prove that any restricted $V$-module $W$ has finite composition length bounded above by $d_W$.

\vspace{0.5\baselineskip} 

$ (2) \Rightarrow (3)$ \
By Lemma \ref{C1-exact} (2), we only need to verify the claim for irreducible ordinary $V$-modules.
Any irreducible ordinary $V$-module $W$ can be generated by a single element $w$.
Write $V=E \oplus C_2(V)$, where $E$ is a finite-dimensional homogeneous subspace of $V$.
By the spanning set theorem in \cite{Bu02, M04}, we have 
\[
W= \operatorname{span}_{C} \left\{v^1_{n_1} v^2_{n_2} \cdots v^p_{n_p} w \; \big{|} \;  v^1, \cdots, v^p \in E,   \; n_1 < n_2 < \cdots < n_p  \right\}.
\]
Combining the lower-truncated property and the finite dimensionality of $E$, 
there exists a positive integer $N$ such that $v_n w=0$ for all $v\in E$ and all integers $n \geq N$.
Let $E_W$ be a homogeneous complement of $C_1(W)$ in $W$. Then 
\[
E_W = \operatorname{span}_{C} \left\{v^1_{n_1} v^2_{n_2} \cdots v^p_{n_p} w \; \big{|} \; v^1, \cdots, v^p \in E,   \;  -1 < n_1 < n_2 < \cdots < n_p <N \right\}.
\]
It is obvious that $E_W$ is finite-dimensional,  which implies that $W$ is $C_1$-cofinite.

\vspace{0.5\baselineskip} 

$ (3) \Rightarrow (4)$ \ It follows from Lemma \ref{C1-lemma}.

\vspace{0.5\baselineskip} 

$ (4) \Rightarrow (1)$ \  
%The argument below follows the idea in \cite{M04, McR26}.
It is enough to consider the case where $W$ is generated by a single homogeneous element $w$, and we suppose  $w \in W_{(\lambda)}$.
Again  by the spanning set theorem in \cite{Bu02, M04}, we have 
\[
W= \operatorname{span}_{\C} \left\{v^1_{n_1} v^2_{n_2} \cdots v^s_{n_s} w \; \big{|} \;  \text{homogeneous} \; v^1, \cdots, v^s \in E,  \; n_1 < n_2 < \cdots < n_s <N \right\},
\]
where $N$ is defined as above. 
Consequently, for any integer $d$, the homogeneous component satisfies
\[
\begin{aligned}
W_{(\lambda+d)}
=\operatorname{span}_{\mathbb{C}}
\biggl\{
v^1_{n_1}v^2_{n_2}\cdots v^s_{n_s}w
\;\bigg|\;
&v^1,\dots,v^s\in E,\; n_1<n_2<\dots<n_s <N, \\
&\sum_{i=1}^s \big(\operatorname{wt}v^i - n_i - 1\big) = d
\biggr\}.
\end{aligned}
\]
Since $E$ is finite-dimensional and $N$ is fixed, for any given integer $d$, 
there exist only finitely many tuples  $ (s; n_1,\dots,n_s)$ with $s \geq 0  $ and $ n_1<\cdots<n_s<N $  satisfying the weight condition
\[
\sum_{i=1}^s \bigl(\operatorname{wt} v^i - n_i - 1\bigr) = d.
\]
Thus, each homogeneous subspace $W_{(\lambda+d)} $ is finite-dimensional.
Therefore, $W$ is restricted $V$-module.

The proof is complete.

%the weight condition
%\[
%\sum_{i=1}^s \bigl(\operatorname{wt} v^i - n_i - 1\bigr) = d
%\]
%has only finitely many tuples $ (s; n_1,\dots,n_s)$ satisfying $s \geq 0  $ and $ n_1<\cdots<n_s<N $. 
%Thus, each homogeneous subspace $W_{(\lambda+d)} $ is finite-dimensional.
%Therefore, $W$ is restricted $V$-module.

%The proof is complete.
\end{proof}

{\bf Definition of $x^{\pm L(0)}$  :}  \; 	Let $W$ be a  generalized $V$-module.  
	The semisimple part $L(0)_s$ of $L(0)$ is defined as the linear operator on $W$  such that
	\[
	L(0)_s (w) =\lambda w,  \; \; \; \forall \;  w \in W_{(\lambda)},  \;   \lambda \in \C.
	\]
    Accordingly, the nilpotent part of $L(0)$ is defined by 
    \[
    L(0)_n = L(0) - L(0)_s.
    \]
	The operators $L(0)_s$ and $L(0)_n$ commute.
    Since $L(0)$ acts locally finite on $W$, the formal operator powers $x^{\pm L(0)}$ are well-defined (see \cite{HLZ10II}). 
	We recall their definitions: $x^{\pm L(0)}$ are the linear maps from $W$ to $W[\log x] \{x\}$ given by
   \begin{align*}
    x^{\pm L(0)}w  & = x^{\pm L(0)_n} x^{\pm L(0)_s}w = x^{\pm \lambda}  e^{\pm L(0)_n \log x} w\\
    & =   \sum_{k \geq 0} \frac{1}{k!} (\pm 1)^k \big(L(0)_n \big)^k  w  x^{\pm \lambda} (\log x )^k \in x^{\lambda}  W[\log x]
    \end{align*}
	for any homogeneous $w \in W_{(\lambda)}$. It is straightforward to verify that
    \[
    x^{L(0)} x^{-L(0)}=id_W= x^{-L(0)} x^{L(0)}.
    \]
	
	\vspace{0.5\baselineskip}

   {\bf Contragredient modules:} \ We recall the definition of the contragredient module (see \cite{FHL93,HLZ10II, Li99b}).
	Let $V$ be a vertex operator algebra, and let $(W, Y_W)$ be a  lower-truncated generalized  $V$-module.
	Let 
	\[
	W^{\prime} = \bigoplus_{\lambda \in \mathbb{C}} \big( W_{(\lambda)} \big)^*
	\]
	be the restricted dual space of $W$. 
	Define  the vertex operator $Y_{W^{\prime}}$ on $W^{\prime}$ by
	\begin{align} \label{dual-id}
	\langle Y_{W^{\prime}}(v, x)w^{\prime}, \  w  \rangle 
    =\big\langle w^{\prime}, \ Y_W \big( e^{xL(1)}(-x^{-2})^{L(0)}v, x^{-1} \big)w \big \rangle,
	\end{align}
	for all $v \in V, w \in W$, and $ w^{\prime} \in W^{\prime}$. 	
    Throughout this paper, the pairing $\langle w', w \rangle$ denotes the evaluation
    $w^{\prime}(w)$ for $w^{\prime} \in W^{\prime}$ and $w \in W$. 
    Note that the lower-truncated property of $W$ guarantees that $Y_{W^{\prime}}(v, x)w^{\prime}$ contains only finitely many negative powers of $x$. 
    Under these definitions, $(W^{\prime}, Y_{W^{\prime}})$ forms a weak $V$-module.

Now assume further that $W$ is a restricted $V$-module. For any $\lambda \in \C$, by the finite-dimensionality of $W_{(\lambda)}$, there exists
$K \in \N$ such that $(L(0)-\lambda)^KW_{(\lambda)}=0$.
Then for any $w^{\prime} \in (W_{(\lambda)})^*$, we have
\[
 \big \langle (L(0)-\lambda)^K w^{\prime}, \ w  \big \rangle=\big \langle w^{\prime}, \  (L(0)-\lambda)^K w  \big \rangle=0
\]
for all $w \in W$. 
It follows that $(W', Y_{W'})$ is again a restricted $V$-module,
whose weight spaces coincide with the duals of those of $W$. That is,
\[
(W')_{(\lambda)} = \big(W_{(\lambda)}\big)^*
\]
for every weight $\lambda$. In particular, $\operatorname{wt}(W) = \operatorname{wt}(W')$.
Furthermore, the canonical evaluation map gives a $V$-module isomorphism $W \cong W''$. 

	\vspace{0.5\baselineskip}

Since the canonical isomorphism between a restricted $V$-module $W$ and its double restricted dual will be invoked repeatedly in later arguments, we adopt the following convention for simplicity:

\vspace{0.2\baselineskip} 

{\bf Convention:} 
Let $W$ be a graded vector space with finite-dimensional homogeneous components, and let $\theta_W: W \to W^{\prime \prime}$ 
denote the natural isomorphism onto its double restricted dual. 
From now on, we identify $W$ with its double restricted dual $W^{\prime \prime}$ via $\theta_W$ and omit writing  $\theta_W$ explicitly.
In particular, for $w\in W$ and $w''\in W''$, the equality $w = w''$ signifies
$\langle w^\prime, w \rangle = \langle w'', w^\prime \rangle$
for all $w^\prime \in W'$. 
	
\section{Logarithmic Intertwining Operators} \label{sec3}

The definition of (logarithmic) intertwining operators and their basic properties are standard, 
and can be found in \cite{CKM24,FHL93, HLZ10II, M02, M08}.
	
\subsection{ Logarithmic Intertwining Operators and Tensor Products}
    
\begin{definition}\label{def:Intertwining}
	Let $W^1, W^2$ and $W^3$ be weak $V$-modules.
	A {\bf logarithmic  intertwining operator} of type $\binom{W^3}{W^1 \ W^2}$ is a linear map
	\begin{align*}
		\ \	\mathcal{Y}(- , x)-: & \ W^1 \otimes W^2  \  \to  \ W^3 [\log x]  \{x\} \\
		&w^1 \otimes w^2 \ \to \  \Y(w^1, x)w^2=\sum_{n \in \C}  \sum_{ k=0}^{p_n} (w^1)^\Y_{n;k}w^2x^{-n-1}(\log x)^k \in  W_3 [\log x]  \{x\},
	\end{align*}
	where $p_n \in \N$, satisfying the following conditions:

    \begin{enumerate}[{(1)}]
    
		\item Truncation property: 
		For any $w^1 \in W^1, w^2 \in W^2$, and $h \in \C,$ 
        $(w^1)^{\Y}_{h+n;k}w^2=0 $ for all sufficiently large integers $n$, independently over $k$;
			
		\item Jacobi identity: For all $v \in V$, $w^1 \in W^1$, $w^2\in W^2$,
		\begin{align*} \label{Jacobi}
			x_0^{-1} \delta\!\left(\frac{x_1-x_2}{x_0}\right) & Y_{W^3}(v,x_1) \mathcal{Y}(w^1,x_2)w^2 
			- x_0^{-1} \delta\!\left(\frac{-x_2+x_1}{x_0}\right) \mathcal{Y}(w^1,x_2) Y_{W^2}(v,x_1)w^2 \notag \\
			&= x_2^{-1} \delta\!\left(\frac{x_1-x_0}{x_2}\right) \mathcal{Y}\big(Y_{W^1}(v,x_0)w^1, x_2\big)w^2;
		\end{align*}
				  
		\item $L(-1)$-derivative property: For all $w^1 \in W^1$, 
		\begin{equation*} \label{L(-1)-der}
			\Y \big(L(-1)w^1, x \big)=\frac{d}{d x}\Y(w^1, x).    
		\end{equation*}
	\end{enumerate}
        
	For simplicity, we write $w^1_{n;k}w^2$ instead of $(w^1)^\Y_{n;k}w^2$ when no confusion arises. 
	From now on, 'intertwining operator' means 'logarithmic intertwining operator' unless otherwise specified.
	The set of all intertwining operators of type $\binom{W^3}{W^1 \ W^2}$ forms a vector space, 
    denoted by  $\mathcal{I} \binom{W^3}{W^1 \ W^2}$. 
	Its dimension is called the {\bf fusion rule} for $W^1, W^2$ and $W^3$, and is denoted by $N(W^1, W^2; W^3).$	
\end{definition}

Let $W^1, W^2$ and $W^3$ be weak $V$-modules.
Let $\Y$ be an intertwining operator of type $\binom{W^3}{W^1, W^2}$. 
Its image, $ \operatorname{Im} \Y \subseteq W^3$, is defined as the subspace spanned by all coefficients of $\Y(w^1, x)w^2$ for all $w^1 \in W^1$ and $w^2 \in W^2$.
Note that $\operatorname{Im} \Y$ is a $V$-submodule of $W^3$.
If $\operatorname{Im}\Y =W^3$, then $\Y$ is called a {\bf surjective intertwining operator}.
We say that \(\mathcal{Y}\) is {\bf injective in the first argument} if \(\mathcal{Y}(w^1,x)=0\) implies \(w^1=0\).
    
\begin{definition}
	Let $W^1$ and $W^2$ be restricted  $V$-modules. 
	A \textbf{tensor product} of $W^1$ and $W^2$ in the category of  restricted  $V$-modules 
	is a pair $\big(W^1\boxtimes W^2, \mathcal{Y}^{\boxtimes} \big)$, where $W^1\boxtimes W^2$ is a restricted  
	$V$-module and $\mathcal{Y}^{\boxtimes}$ is an intertwining operator of type 
	$\binom{W^1\boxtimes W^2}{W^1 \ W^2}$, satisfying the following universal property: 
	for any restricted  $V$-module $W$ and any intertwining operator $\mathcal{Y}$ of type $\binom{W}{W^1 \ W^2}$, there 
	exists a unique $V$-module homomorphism $\phi: W^1\boxtimes W^2 \to W$ such that
	$\mathcal{Y} = \phi \circ \mathcal{Y}^{\boxtimes}.$
\end{definition}

If the tensor product $W^1\boxtimes W^2$ exists, then $\Y^{\boxtimes}$ is a surjective intertwining operator.
Moreover, the universal property gives an isomorphism between vector spaces
\[
\operatorname{Hom}_{V}(W_1 \boxtimes W_2, W) \longrightarrow 
\mathcal{I}\binom{W}{W_1 \; W_2}, \quad
\phi \longmapsto \phi \circ \mathcal{Y}^{\boxtimes},
\]
for any restricted $V$-module $W$.

\vspace{0.5\baselineskip} 

We have the following result \cite{M14, H25C1, YZ26}: 
\begin{theorem} \label{C1-tensor}
	Let $W^1$ and $W^2$ be $C_1$-cofinite restricted $V$-modules. 
    Then the tensor product $W^1 \boxtimes W^2$ exists and is also a $C_1$-cofinite restricted $V$-module.
\end{theorem}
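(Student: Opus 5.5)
The plan is to obtain the tensor product as the contragredient of a suitable $P(z)$-type dual space (or, equivalently, as a quotient of a free construction), and to use $C_1$-cofiniteness to force finiteness. I follow the strategy of Miyamoto \cite{M14} and Huang \cite{H25C1}.

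Fix $z\in\C^\times$ and consider the space $(W^1\otimes W^2)^*$. On this space one defines the Huang--Lepowsky--Zhang action $Y'_{P(z)}(v,x)$ using the compatibility condition. Let $W^1\pzbox_{P(z)}W^2$ be the subspace of elements satisfying two conditions: the $P(z)$-compatibility condition, and the $P(z)$-local grading restriction condition. By \cite{HLZ10II}, if this subspace is a restricted generalized module, then its contragredient, together with the intertwining operator induced from the canonical pairing, is the tensor product $W^1\boxtimes W^2$. The universal property follows from the standard correspondence between $P(z)$-intertwining maps and logarithmic intertwining operators.

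\begin{enumerate}[{(1)}]
\item \emph{Finiteness.} Take homogeneous complements $E_{W^1}$ and $E_{W^2}$ of $C_1(W^1)$ and $C_1(W^2)$. Both are finite-dimensional, and by Lemma \ref{C1-lemma} their weights lie in finitely many cosets $\lambda_i+\N$.
\item \emph{Reduction to a finite subspace.} Let $\lambda$ lie in a compatible, locally graded element of the dual and be generalized homogeneous. Use the $L(-1)$-derivative and the Jacobi identity to rewrite $\lambda(v_{-1}w^1\otimes w^2)$ and $\lambda(w^1\otimes v_{-1}w^2)$ as finite combinations involving modes of $Y'_{P(z)}$ applied to $\lambda$. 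Iterating this, by induction on weight, shows that $\lambda$ is determined by its values on $E_{W^1}\otimes E_{W^2}$ together with finitely many descendants.
\item \emph{Grading.} Conclude that the generalized weights of the module generated by any element lie in finitely many cosets, and that each weight space is finite-dimensional. A dimension bound comes from the size of $E_{W^1}\otimes E_{W^2}$ and the number of monomials of each weight, as in Lemma \ref{lem:C2-C1}.
\item \emph{$C_1$-cofiniteness of the product.} Since $\Y^\boxtimes$ is surjective, $W^1\boxtimes W^2$ is spanned by the coefficients $w^1_{n;k}w^2$. Using the associativity-type formula
\[
v_{-1}\,w^1_{n;k}w^2 \equiv \textstyle\sum (\text{terms } (v_jw^1)_{m;k}w^2,\ w^1_{m;k}v_jw^2 \text{ with } j\ge 0),
\]
together with $L(-1)$-derivative relations modulo $C_1(W^1\boxtimes W^2)$, show that $(W^1\boxtimes W^2)/C_1(W^1\boxtimes W^2)$ is spanned by the images of $e^1_{n;k}e^2$. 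Here $e^i\in E_{W^i}$, and $n$ ranges over a finite set determined by the weights, while $k$ is bounded by the nilpotency orders of $L(0)$. This finite-dimensionality is the content of \cite{YZ26}.
\end{enumerate}

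I expect step (2) to be the main obstacle: showing that the dual construction is grading-restricted and that its contragredient really is universal among restricted modules. This needs careful control of the logarithmic terms, and a filtration argument is required to make the induction on weight terminate. Since the statement is quoted from \cite{M14, H25C1, YZ26}, in the paper I would simply cite these works rather than reprove it.
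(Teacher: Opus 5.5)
The paper does not prove Theorem~\ref{C1-tensor}. It quotes the result from \cite{M14, H25C1, YZ26}, so your closing decision to cite those works matches the paper exactly. Read as an argument in its own right, however, your sketch leaves open precisely the two points that carry the content of the theorem.

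\textbf{Step (4) does not bound $n$.} For $\mathrm{wt}\,v>0$, the Jacobi identity gives, modulo $C_1(W^1\boxtimes W^2)$,
\[
(v_{-1}w^1)_{n;k}w^2\equiv\sum_{i\ge 0}w^1_{n-1-i;k}\,v_iw^2,\qquad w^1_{n;k}\,v_{-1}w^2\equiv-\sum_{j\ge 0}(-1)^j(v_jw^1)_{n-1-j;k}\,w^2 .
\]
These relations keep the output weight fixed and lower $\mathrm{wt}\,w^1+\mathrm{wt}\,w^2$. So they only show that the quotient is spanned by the images of $e^1_{n;k}e^2$ with $n$ unrestricted: one $n$ for each output weight and each pair $(e^1,e^2)$. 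Nothing about the weights cuts this to finitely many $n$. Nor is $L(-1)W$ contained in $C_1(W)$ in general; for example, $L(-1)v_h\notin C_1(W)$ for the lowest weight vector of an irreducible Virasoro module with $h\neq 0$. Hence your claim that ``$n$ ranges over a finite set determined by the weights'' is the whole statement, not a consequence of the reductions.

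The route indicated in the paper avoids bounding $n$; it is Lemma~\ref{keylemma}, which the paper says is how \cite{H25C1} proves this part. Take $w'\in(W^1\boxtimes W^2)'$ annihilating $C_1(W^1\boxtimes W^2)$.
\begin{enumerate}[(a)]
\item The functional $w^1\otimes w^2\mapsto\langle w',\mathcal{Y}^{\boxtimes}(w^1,z_0)w^2\rangle$ is determined by its restriction to $E_{W^1}\otimes E_{W^2}$, using the two relations above evaluated at $z_0$.
\item Lemma~\ref{keylemma} and the surjectivity of $\mathcal{Y}^{\boxtimes}$ make the assignment of this restriction to $w'$ injective.
\item Hence $\dim C_1(W^1\boxtimes W^2)^{\perp}\le\dim E_{W^1}\cdot\dim E_{W^2}$.
\end{enumerate}
If you prefer to stay formal, you must feed the exact identity $(L(-1)e^1)_{n;k}=-n\,e^1_{n-1;k}+(k+1)e^1_{n-1;k+1}$ back through the reductions, with $L(-1)e^1$ split along $E_{W^1}\oplus C_1(W^1)$. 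At a fixed output weight $\mu$ this gives a linear system on the images of the $e^1_{n;k}e^2$. Its matrix is $\mu$ minus a fixed matrix independent of $\mu$, up to a nilpotent shift in $k$. It therefore kills those images for all but finitely many $\mu$. Neither mechanism appears in your sketch.

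\textbf{Steps (2)--(3), the existence part, have the same kind of hole.} That the weights of $W^1\pzbox_{P(z_0)}W^2$ lie in finitely many cosets is the easy part. Grading restriction needs a lower bound in each coset and a bound on each weight space, both uniform over all elements. Your statement that ``$\lambda$ is determined by its values on $E_{W^1}\otimes E_{W^2}$ together with finitely many descendants'' gives neither: it controls each cyclic submodule separately and says nothing about where its lowest weights sit. Also, the monomial count of Lemma~\ref{lem:C2-C1} rests on the $C_2$ spanning set of $V$, which is not assumed here.

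What is missing is a uniform statement. For every restricted $W$ and surjective $\mathcal{Y}\in\mathcal{I}\binom{W}{W^1\;W^2}$, you need:
\begin{enumerate}[(i)]
\item $\dim W/C_1(W)\le\dim E_{W^1}\cdot\dim E_{W^2}$, as above;
\item the weights of $E_W$ lie in a finite set depending only on $W^1,W^2$, which follows from the $L(-1)$ system above paired with $C_1(W)^{\perp}$.
\end{enumerate}
Given these, the argument closes. Since $C_1(W)$ has no component in the lowest weight of any coset, (ii) bounds the lowest weights of $W$. Lemma~\ref{C1-lemma} then bounds each $W_{(\mu)}$ uniformly. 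Because such targets form a directed family (take images of direct sums), $W^1\pzbox_{P(z_0)}W^2$ is restricted. You rightly flag this as the main obstacle, but the sketch does not supply it, so as written the proposal rests entirely on the citation.
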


\subsection{ The definitions of $\Omega_r(\Y)$ and $A_r(\Y)$  }
    
	We next define the intertwining operators $\Omega_r(\Y)$ and $A_r(\Y)$ for every $r \in \Z$ (cf.  \cite{CKM24, FHL93,HLZ10II}).
	Let $W$ be any vector space,  and consider a formal series
	\[f(x)=\sum_{n \in \C} \sum_{ k=0}^{p_n}   w_{n;k} x^{-n-1}(\log x)^k \in W[\log x] \{x\},
	\] 
    where $p_n \in \N$.
	In the subsequent definitions of $\Omega_r(\Y)$ and $A_r(\Y)$, we adopt the following conventions for the expressions
    $f \big( e^{ (2r+1)\pi \mathrm{i}} x \big) $ ($r \in \Z$) and $f(x^{-1})$:
	\begin{align*}
	f \big( e^{ (2r+1)\pi \mathrm{i}} x \big) 
    & = \sum_{ n \in \C}\sum_{k=0}^{p_n}  e^{(-n-1)  (2r+1)\pi \mathrm{i}}  w_{n;k}  x^{-n-1} \big(   (2r+1)\pi \mathrm{i}  +   \log x \big)^k \\
    & = \sum_{ n \in \C}\sum_{k=0}^{p_n} \sum_{j=0}^k  \binom{k}{j}
    e^{(-n-1)  (2r+1)\pi \mathrm{i}} \big((2r+1)\pi \mathrm{i} \big)^{j} w_{n;k}  x^{-n-1} (\log x)^{k-j}.
	\end{align*}
	and
	\[
	f(x^{-1})= \sum_{n \in \C}  \sum_{ k=0}^{p_n} (-1)^k w_{n;k} x^{n+1}(\log x)^k.
	\]

	Let $W^1, W^2$ and $W^3$ be weak $V$-modules, and
	let $\Y$ be an intertwining operator of type $\binom{W^3}{W^1, W^2}$. 
	For any $r \in \Z$, define a linear map 
	\[
	\Omega_r(\Y): W^1 \otimes W^2 \to W^3 [\log x] \{x\}
	\] 
	by 
	\[
	\Omega_r(\Y) \big(w^2, x \big)w^1 = e^{xL(-1)} \Y \big(w^1, e^{ (2r+1)\pi \mathrm{i}}x \big)w^2, 
	\]
	for all $ w^1 \in W^1, w^2 \in W^2.$
	Then $\Omega_r(\Y)$ is an intertwining operator of type  $\binom{W^3}{W^2, W^1}$. 
	Moreover,  we have
	\[
	\Omega_{-r-1}(\Omega_r(\Y))=	\Omega_{r}(\Omega_{-r-1}(\Y)) =\Y.
	\]

	Let $W^1$ be a generalized $V$-module, and $ W^2$ and $W^3$ be restricted $V$-modules. 
	Suppose $\Y$ is an intertwining operator of type $\binom{W^3}{W^1, W^2}$ and $r \in \Z$. 
	Define a linear map
	\[
	A_r(\Y): W^1 \otimes (W^3)^\prime \to (W^2)^\prime[\log x] \{x\}
	\]
	by
	\[
	\left<A_r(\Y)(w^1, x)w_3^{\prime}, \  w^2 \right >
    =\left<w_3^{\prime}, \ \Y \big( e^{xL(1)}  (e^{(2r+1) \pi \mathrm{i} } x^{-2} \big)^{L(0)} w^1  , x^{-1} \big)w^2 \right>
	\]
	for all $w^1 \in W^1, w^2 \in W^2$ and $w_3^{\prime}\in (W^3)^\prime$.
	Then $A_r(\Y)$ is an intertwining operator of type  
    $\binom{(W^2)^\prime}{W^1, \ (W^3)^\prime}$.
    Moreover, we have
	\[
	A_{-r-1}\big( A_r(\Y) \big)=	A_{r}\big( A_{-r-1}(\Y) \big) =\Y.
	\]
	
We note that the lower-truncated condition of $W^2$ implies that
the powers of $x$ appearing in $A_r(\mathcal{Y})(w^1, x)w_3'$ are lower-truncated.
In addition, since all weight spaces of $W^2$ and $W^3$ are finite-dimensional,
for any $n\in\mathbb{C}$, the coefficient of $x^{-n-1}$ in $A_r(\mathcal{Y})(w^1, x)w_3'$
lies in $(W^2)'[\log x]$.

\subsection{Some Lemmas}

In this subsection, we collect several lemmas on intertwining operators that will be used later.

\begin{lemma}\label{gen-submodule-lemma}
Let $W^1, W^2, W^3$ be weak $V$-modules, and let $\Y \in \mathcal{I}\binom{W^3}{W^1 \ W^2} $.
Let $w \in W^1$, and let $W$ be the submodule of $W^1$ generated by $w$.
Suppose $M$ is a submodule of $W^3$ such that
\[
\Y(w, x)w^2 \in M[\log x]\{x\},  \; \; \forall \; w^2 \in W^2.
\]
Then 
\[
\Y(w^1, x)w^2 \in M[\log x]\{x\},  \; \;  \forall \;  w^1 \in W, \;  w^2 \in W^2.
\]
In particular, taking $M = 0$, the annihilator
\[
K=\{ w^1 \in W^1 \ | \ \Y(w^1, x)=0 \} \subset W^1.
\]
is a $V$-submodule of $W^1$.

\end{lemma}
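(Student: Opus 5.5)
The plan is to work with the subspace
\[
U=\left\{ w^1 \in W^1 \ \big| \ \Y(w^1, x)w^2 \in M[\log x]\{x\} \ \text{for all } w^2 \in W^2 \right\}.
\]
By hypothesis $w \in U$. If we show that $U$ is a $V$-submodule of $W^1$, then $W \subseteq U$ follows from minimality of the generated submodule, which is exactly the claim. Clearly $U$ is a linear subspace, and $\mathbf{1}_{-1}$ acts as the identity. So it suffices to show that $v_n u \in U$ for all $v \in V$, $n \in \Z$ and $u \in U$.

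For this I will use the Jacobi identity of $\Y$ in Definition \ref{def:Intertwining}. Fix $u \in U$, $w^2 \in W^2$ and $v \in V$, and take $\Res_{x_1}$ of the Jacobi identity after multiplying by $x_0^{n}$. Equivalently, I extract the coefficient of $x_0^{-n-1}$ from the right-hand side. This yields the standard associator formula
\[
\Y(v_n u, x_2)w^2=\Res_{x_1}\Big( (x_1-x_2)^n Y_{W^3}(v,x_1)\Y(u,x_2)w^2 - (-x_2+x_1)^n \Y(u,x_2)Y_{W^2}(v,x_1)w^2\Big).
\]
Both terms on the right lie in $M[\log x_2]\{x_2\}$:

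\begin{enumerate}[{(1)}]
\item In the first term, $\Y(u,x_2)w^2$ has all coefficients in $M$. Since $M$ is a $V$-submodule of $W^3$, applying the components of $Y_{W^3}(v,x_1)$ keeps us in $M$.
\item In the second term, $\Y(u,x_2)$ is applied to the elements $v_m w^2 \in W^2$, which land in $M$ because $u \in U$.
\end{enumerate}

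Hence $v_n u \in U$, and $U$ is a submodule.

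The only point needing care is that each coefficient of $x_2^{\alpha}(\log x_2)^k$ in the residue expression is a \emph{finite} sum of elements of $M$, so that it genuinely lies in $M$ and no infinite sums occur. For the first term, the expansion of $(x_1-x_2)^n$ in nonnegative powers of $x_2$ is paired with $Y_{W^3}(v,x_1)$. Since $M$ is a submodule of the weak module $W^3$, for each coefficient $m$ of $\Y(u,x_2)w^2$ we have $v_j m=0$ for $j \gg 0$. The truncation property (1) of $\Y$ further bounds the powers of $x_2$ contributing to a fixed coefficient within a fixed congruence class $h+\Z$. For the second term, $Y_{W^2}(v,x_1)w^2 \in W^2((x_1))$ is lower truncated, so only finitely many $v_m w^2$ contribute. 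I expect this finiteness bookkeeping to be the only mildly delicate step, and it is routine.

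The final assertion follows by taking $M=0$. Then $U$ is exactly the annihilator $K$, and the argument above shows directly that $K$ is closed under all $v_n$, so $K$ is a $V$-submodule of $W^1$.
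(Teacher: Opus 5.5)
Your proof is correct and is essentially the paper's argument: both derive the associator formula for $\Y(v_n u,x)w^2$ from the Jacobi identity and conclude that both terms lie in $M[\log x]\{x\}$, and your closure set $U$ spells out the iteration over generators that the paper leaves to ``since $W$ is generated by $w$.'' One small correction to your bookkeeping: in the first term the modes that appear are $v_{n-i}$ with $i\ge 0$, so their index runs to $-\infty$ and the remark about $v_j m=0$ for $j\gg 0$ does no work; finiteness there comes solely from the truncation property of $\Y$, which you also invoke.
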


\begin{proof}
For any $u \in V$, $n\in\mathbb{Z}$, and $w^2 \in W^2$, the Jacobi identity for $\Y$ gives
\begin{align*}
\Y(u_n w,x)w^2
&= \operatorname{Res}_{x_1}\Big\{(x_1-x)^nY_{W^3}(u,x_1)\Y(w,x)w^2 \\
&\qquad\qquad -(-x+x_1)^n\Y(w,x)Y_{W^2}(u,x_1)w^2\Big\} \\
&\in M[\log x]\{x\}.
\end{align*}
Since $W$ is generated by $w$, the claim follows.

\end{proof}

\begin{lemma} \label{Inj-Sur}
Suppose $r,s\in\mathbb{Z}$.
Let $W^1,W^2,W^3$ be restricted $V$-modules, and let $\Y$ be an intertwining operator of type $\binom{W^3}{W^1 \; W^2}$,
Then the following statements hold:
\begin{enumerate}[(1)]
\item Let $w \in W^1$. Then
\[
\bigl\langle A_r \Omega_s \mathcal{Y} (w^2, x) w_3',\; w \bigr\rangle =0, 
\; \;  \; \; \forall \; w^2 \in W^2, \;  w_3^\prime \in (W^3)^\prime, 
\]
if and only if
 \[
\bigl\langle w_3', \; \mathcal{Y} \big(w,  x \big) w^2 \bigr\rangle =0, 
\,  \,  \,  \,  \forall \, w^2 \in W^2, \;  w_3^\prime \in (W^3)^\prime.
\]

\item The intertwining operator $\mathcal{Y}(-, x)-$ is injective in its first argument if and only if $ A_r \Omega_s \mathcal{Y}$ is surjective.
\end{enumerate}
\end{lemma}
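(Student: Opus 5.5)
Write $\widetilde{\Y} = A_r \Omega_s \Y$. It is an intertwining operator of type $\binom{(W^1)'}{W^2 \; (W^3)'}$: first $\Omega_s\Y$ has type $\binom{W^3}{W^2\; W^1}$, and then $A_r$ turns this into $\binom{(W^1)'}{W^2\;(W^3)'}$. The plan is to unwind both operations and compare the two pairings in part (1) directly. Part (2) will then follow by a dimension/annihilator argument on each weight space.

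\emph{Part (1): the explicit pairing.} For $w \in W^1$, $w^2 \in W^2$ and $w_3' \in (W^3)'$, the definitions give
\[
\bigl\langle \widetilde{\Y}(w^2,x)w_3',\, w\bigr\rangle
= \bigl\langle w_3',\, \Omega_s(\Y)\bigl(e^{xL(1)}(e^{(2r+1)\pi \mathrm{i}}x^{-2})^{L(0)}w^2,\, x^{-1}\bigr) w \bigr\rangle .
\]
By the definition of $\Omega_s$, this equals
\[
\bigl\langle w_3',\, e^{x^{-1}L(-1)}\,\Y\bigl(w,\, e^{(2s+1)\pi \mathrm{i}}x^{-1}\bigr)\, e^{xL(1)}(e^{(2r+1)\pi \mathrm{i}}x^{-2})^{L(0)}w^2 \bigr\rangle .
\]
Moving $e^{x^{-1}L(-1)}$ across the pairing produces $e^{x^{-1}L'(1)}$ acting on $w_3'$. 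This operator is invertible on $(W^3)'$, since it is locally unipotent: it raises weight and $W^3$ is lower-truncated, so the exponential series is locally finite.

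\emph{Part (1): the two directions.} Suppose the right-hand vanishing condition holds, i.e.\ $\langle w_3', \Y(w,x)w^2\rangle = 0$ for all $w^2$ and $w_3'$. Then the same holds with $x$ replaced by $e^{(2s+1)\pi \mathrm{i}}x^{-1}$, because this substitution is injective on coefficients; it only rescales and mixes the log powers triangularly. Applying this with $w^2$ replaced by the coefficients of $e^{xL(1)}(\cdots)^{L(0)}w^2$, and $w_3'$ replaced by $e^{x^{-1}L'(1)}w_3'$, shows that the left-hand side vanishes.

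Conversely, suppose the left-hand vanishing condition holds for all $w^2$ and $w_3'$. The operators $e^{xL(1)}$ and $(e^{(2r+1)\pi \mathrm{i}}x^{-2})^{L(0)}$ are formally invertible, and so is $e^{x^{-1}L'(1)}$. Substituting $w^2 \mapsto (\cdots)^{-L(0)}e^{-xL(1)}w^2$ and $w_3' \mapsto e^{-x^{-1}L'(1)}w_3'$, we recover
\[
\langle w_3', \Y(w, e^{(2s+1)\pi \mathrm{i}}x^{-1})w^2\rangle = 0,
\]
and undoing the substitution $x \mapsto e^{(2s+1)\pi \mathrm{i}}x^{-1}$ gives the right-hand condition.

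\emph{The main obstacle} is making these substitutions rigorous with formal variables and logarithms. The coefficients of $e^{xL(1)}x^{-2L(0)}w^2$ depend on $x$, so "substituting" must be done coefficientwise. The cleanest route is to isolate the $x$-dependence using the weight-grading and expand the pairing as a finite triangular transformation of coefficient data. Alternatively, one can invoke the identity $A_{-r-1}A_r = \mathrm{id}$ and $\Omega_{-s-1}\Omega_s = \mathrm{id}$ to obtain symmetry: after proving one direction for general operators, apply it to $\widetilde{\Y}$ to get the other. I would try this symmetry route first. It needs only the implication "right vanishing $\Rightarrow$ left vanishing" for arbitrary $\Y$, together with the fact that the roles of $w$ and $w_3'$ are swapped under the inverse operations.

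\emph{Part (2).} $\widetilde{\Y}$ is surjective iff $\operatorname{Im}\widetilde{\Y} = (W^1)'$. Since $\operatorname{Im}\widetilde{\Y}$ is a submodule and weight spaces of $W^1$ are finite-dimensional, this holds iff its annihilator in $W^1$ is zero, i.e.\ iff no nonzero $w \in W^1$ pairs trivially with all coefficients. Here I use that a homogeneous submodule of a restricted dual is determined by its annihilator (the double dual convention), reducing weight space by weight space; the weight-compatible decomposition of $w$ is used here. By part (1), such a $w$ is exactly one with $\langle w_3', \Y(w,x)w^2\rangle = 0$ for all $w^2, w_3'$, i.e.\ $\Y(w,x) = 0$, since $(W^3)'$ separates points of $W^3$. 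Hence surjectivity of $\widetilde{\Y}$ is equivalent to injectivity of $\Y$ in its first argument. The annihilator set is homogeneous by Lemma \ref{gen-submodule-lemma} together with the $L(0)$-conjugation property, which justifies working componentwise.
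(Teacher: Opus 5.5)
Your direct argument in Part (1) is the paper's proof. It uses the same unwinding to $\bigl\langle e^{x^{-1}L(1)}w_3',\, \Y\bigl(w, e^{(2s+1)\pi \mathrm{i}}x^{-1}\bigr)\, e^{xL(1)}\bigl(e^{(2r+1)\pi \mathrm{i}}x^{-2}\bigr)^{L(0)}w^2\bigr\rangle$ and the same invertible substitutions in $w^2$, $w_3'$ and $x$; your annihilator argument on the finite-dimensional weight spaces of $W^1$ just spells out the paper's remark that (2) follows immediately from (1).

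Two small corrections. First, drop the symmetry route: it is unnecessary, and as stated it would not give the converse, because $A_r\Omega_s$ permutes the three slots cyclically rather than swapping $w$ and $w_3'$, so the one-way implication applied to $A_r\Omega_s\Y$ is about elements of $W^2$, not $w$. Second, $e^{x^{-1}L(1)}$ is locally finite on $(W^3)'$ because $L(1)$ \emph{lowers} weight there and $(W^3)'$ is lower-truncated, not because it raises weight.
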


\begin{proof}
(1)  ($  \Rightarrow $)  \,  By the definitions of $\Omega_s\Y$ and $A_r\Y$, we have
\begin{align*}
0 &= \bigl\langle A_r \Omega_s \mathcal{Y} (w^2, x) w_3', \, w \bigr\rangle \\
&= \Bigl\langle  w_3',\; e^{x^{-1}L(-1) } \mathcal{Y} \big(w, \, e^{(2s+1)\pi \mathrm{i}} x^{-1} \big)
e^{xL(1)} \big( e^{(2r+1)\pi \mathrm{i}} x^{-2} \big)^{L(0)}  w^2 \Bigr\rangle \\
&= \Bigl\langle  e^{x^{-1}L(1) }  w_3',\; \mathcal{Y} \big(w, \, e^{(2s+1)\pi \mathrm{i}} x^{-1} \big)
e^{xL(1)} \big( e^{(2r+1)\pi \mathrm{i}} x^{-2} \big)^{L(0)}  w^2 \Bigr\rangle
\end{align*}
for all $w^2\in W^2$ and $w_3'\in (W^3)'$.
Making the substitutions
\[
w^2\mapsto \big(e^{(2r+1)\pi \mathrm{i}} x^{-2} \big)^{-L(0)} e^{-xL(1)} w^2 \quad   \text{and} \quad
w_3'\mapsto e^{-x^{-1}L(1)} w_3'
\]
yields
\[
\bigl\langle w_3',\; \mathcal{Y} \big(w, \, e^{(2s+1)\pi \mathrm{i}} x^{-1} \big)w^2 \bigr\rangle =0
\]
for all $w^2\in W^2$ and $w_3'\in (W^3)'$. 
Finally, substituting $x$ by $e^{(2r+1)\pi \mathrm{i}} x^{-1} $, we arrive at
\[
\bigl\langle w_3',\; \mathcal{Y} \big(w,  x \big) w^2 \bigr\rangle =0
\]
for all $w^2\in W^2$ and $w_3'\in (W^3)'$.

($ \Leftarrow $) \,  Since all substitutions above are invertible,  reversing the steps proves the reverse implication.

Hence, the two vanishing conditions are equivalent, finishing the proof of (1).

\vspace{0.5\baselineskip}

(2) The assertion follows immediately from part (1).

\end{proof}

\begin{lemma}
 
Let $ W_i, W^i \; ( i=1, 2, 3) $  be weak $V$-modules,
and let
\[
\sigma_1: W^1 \to W_1, \ \sigma_2: W^2 \to W_2  \ \ \text{and} \ \ \sigma_3: W_3 \to W^3
\]
be \(V\)-module homomorphisms.  
Let \(\mathcal{Y}(-,x)-\) be an intertwining operator of type \(\binom{W_3}{W_1\,W_2}\).  
Define the operator $[ \sigma_1,\sigma_2, \sigma_3]\circ \mathcal{Y} $ by
\[
 \big([ \sigma_1,\sigma_2, \sigma_3]\circ \mathcal{Y} \big)(w^1, x)w^2 = 
 \sigma_3 \Big(  \mathcal{Y}\bigl(\sigma_1 ( w^1), x\bigr)  \sigma_2( w^2) \Big)
\]
for all $w^1 \in W^1$ and $w^2 \in W^2$.
Then $ [ \sigma_1,\sigma_2, \sigma_3]\circ \mathcal{Y} $  is an intertwining operator of type \(\binom{W^3}{W^1\,W^2}\).
Moreover,  if $W_1$ is a generalized $V$-module and $W_2, W_3$ are  restricted $V$-modules,
then for any $r \in \N$, we have 
\[
\Omega_r \big([\sigma_1,\sigma_2, \sigma_3]\circ\mathcal{Y}\big) = [\sigma_2,\sigma_1, \sigma_3]\circ ( \Omega_r \mathcal{Y}) 
\]
and
\[
A_r \big([\sigma_1,\sigma_2, \sigma_3]\circ\mathcal{Y}\big) = [\sigma_1, \sigma_3^*, \sigma_2^* ]\circ ( A_r \mathcal{Y}). 
\]
\end{lemma}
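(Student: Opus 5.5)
The plan is to verify the three conditions of Definition \ref{def:Intertwining} directly for $\widetilde{\Y}:=[\sigma_1,\sigma_2,\sigma_3]\circ\Y$, and then to check the two compatibility identities by unwinding the definitions of $\Omega_r$ and $A_r$.

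\emph{The three axioms.} Since $\sigma_1(w^1)\in W_1$ and $\sigma_2(w^2)\in W_2$, the series $\Y(\sigma_1(w^1),x)\sigma_2(w^2)$ lies in $W_3[\log x]\{x\}$. Applying $\sigma_3$ coefficientwise gives an element of $W^3[\log x]\{x\}$ whose coefficients are
\[
\widetilde{\Y}_{n;k}=\sigma_3\big(\sigma_1(w^1)_{n;k}\,\sigma_2(w^2)\big).
\]
Truncation is therefore inherited directly from $\Y$. For the Jacobi identity, start from the identity for $\Y$ evaluated at $\sigma_1(w^1)$ and $\sigma_2(w^2)$, and apply $\sigma_3$. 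Then use
\[
\sigma_3 Y_{W_3}(v,x_1)=Y_{W^3}(v,x_1)\sigma_3,\qquad Y_{W_2}(v,x_1)\sigma_2=\sigma_2Y_{W^2}(v,x_1),\qquad Y_{W_1}(v,x_0)\sigma_1=\sigma_1Y_{W^1}(v,x_0).
\]
Because all maps are linear and act coefficientwise, they commute with the delta functions. The $L(-1)$-derivative property follows in the same way, since $\sigma_1$ commutes with $L(-1)$ and $\sigma_3$ commutes with $d/dx$.

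\emph{The $\Omega_r$ identity.} Each $\sigma_i$ commutes with every $L(n)$, so $\sigma_3e^{xL(-1)}=e^{xL(-1)}\sigma_3$. Substituting $x\mapsto e^{(2r+1)\pi\mathrm{i}}x$ is a coefficientwise operation, so it commutes with applying $\sigma_3$. Hence
\[
\Omega_r(\widetilde{\Y})(w^2,x)w^1=e^{xL(-1)}\sigma_3\Y\big(\sigma_1 w^1,e^{(2r+1)\pi\mathrm{i}}x\big)\sigma_2w^2=\sigma_3\,\Omega_r(\Y)(\sigma_2w^2,x)\sigma_1w^1,
\]
which is the claimed identity.

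\emph{The $A_r$ identity.} Here $\sigma_3^*:(W^3)'\to(W_3)'$ and $\sigma_2^*:(W_2)'\to(W^2)'$ denote the transposes. For $A_r$ to make sense, the modules $W^2$ and $W^3$ should also be restricted; I would assume this. One first checks that the transposes are $V$-homomorphisms of contragredients. This follows from \eqref{dual-id}, because $\sigma$ commutes with $Y(e^{xL(1)}(-x^{-2})^{L(0)}v,x^{-1})$. Because $\sigma$ preserves generalized weight spaces, the transposes also map restricted duals to restricted duals. Then, pairing with $w^2\in W^2$,
\[
\big\langle A_r(\widetilde{\Y})(w^1,x)w_3',w^2\big\rangle=\big\langle w_3',\sigma_3\Y\big(e^{xL(1)}(e^{(2r+1)\pi\mathrm{i}}x^{-2})^{L(0)}\sigma_1w^1,x^{-1}\big)\sigma_2w^2\big\rangle .
\]
Here we have moved $\sigma_1$ past $e^{xL(1)}$ and past $x^{L(0)}$. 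The latter is legitimate because $\sigma_1$ commutes with both $L(0)_s$ and $L(0)_n$, as it preserves generalized eigenspaces. Moving $\sigma_3$ to $w_3'$ as $\sigma_3^*$ and $\sigma_2$ to the left as $\sigma_2^*$ gives
\[
\big\langle \sigma_2^*A_r(\Y)(\sigma_1w^1,x)\sigma_3^*w_3',w^2\big\rangle ,
\]
as required.

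The only real subtlety, and the step I expect to be the main obstacle, is bookkeeping in the $A_r$ step. One must justify that $\sigma_1$ commutes with the formal operator $x^{L(0)}$ including its log terms. One must also check that the pairing manipulations are valid coefficientwise in $x$ and $\log x$; this uses the finite-dimensionality of the weight spaces so that each coefficient is a finite sum.
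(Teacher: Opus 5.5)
Your proposal is correct and follows the paper's proof. The paper writes out only the $A_r$ chain of pairings: it moves $\sigma_1$ past $e^{xL(1)}(e^{(2r+1)\pi \mathrm{i}}x^{-2})^{L(0)}$, then turns $\sigma_3$ into $\sigma_3^*$ and $\sigma_2$ into $\sigma_2^*$, exactly as you do. It calls the intertwining axioms and the $\Omega_r$ identity straightforward, and your extra points (that $W^2, W^3$ must be restricted, and $W^1$ generalized, for $A_r$ of the composite to make sense, and that transposes are homomorphisms of contragredients) are things the paper uses without stating.
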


\begin{proof}
It is straightforward to see that $[ \sigma_1,\sigma_2, \sigma_3]\circ \mathcal{Y}$ is an intertwining operator of type \(\binom{W^3}{W^1\,W^2}\).
For the two identities, we only prove the second one. The first one can be proved similarly.
By definition,  for all $w^1 \in W^1, w^2 \in W^2$ and $w_3^\prime \in (W^3)^\prime$, we have
\begin{align*}
& \left \langle A_r \big([\sigma_1,\sigma_2, \sigma_3]\circ\mathcal{Y}\big)(w^1, x) w_3^\prime, \ w^2 \right \rangle \\
&=  \left \langle w_3^\prime, \;   \big([\sigma_1,\sigma_2, \sigma_3]\circ\mathcal{Y}\big) \big( e^{xL(1)} 
\big( e^{(2r+1)\pi \mathrm{i}} x^{-2} \big)^{L(0)}w^1,  x^{-1}  \big)w^2    \right \rangle \\
&=  \left \langle w_3^\prime, \;   \sigma_3 \mathcal{Y} \big( \sigma_1 e^{xL(1)} 
\big( e^{(2r+1)\pi \mathrm{i}} x^{-2} \big)^{L(0)}w^1,  x^{-1}  \big) \sigma_2w^2    \right \rangle \\
&=  \left \langle \sigma_3^* w_3^\prime, \;    \mathcal{Y} \big( e^{xL(1)} \big( e^{(2r+1)\pi \mathrm{i}} x^{-2} \big)^{L(0)}  \sigma_1 w^1,  x^{-1}  \big) \sigma_2w^2 
   \right \rangle\\
&=  \left \langle (A_r \mathcal{Y})(\sigma_1w^1, x) \sigma_3^* w_3^\prime, \; \sigma_2w^2 \right \rangle \\
&=  \left \langle \sigma_2^* ( A_r \mathcal{Y}) (\sigma_1w^1, x) \sigma_3^* w_3^\prime, \;  w^2 \right \rangle\\
&=  \left \langle  \big( [\sigma_1, \sigma_3^*, \sigma_2^* ]\circ ( A_r \mathcal{Y})  \big) (w^1, x )w_3^\prime, w^2   \right \rangle.
\end{align*}
Consequently,
\[
A_r \big([\sigma_1,\sigma_2, \sigma_3]\circ\mathcal{Y}\big) = [\sigma_1, \sigma_3^*, \sigma_2^* ]\circ ( A_r \mathcal{Y}),
\]
which completes the proof.

\end{proof}

\section{The Internal Hom  $\H(W^1, W^2)$ } \label{sec4}

\subsection{ Construction of the Internal Hom  $\H(W^1, W^2)$ }

In this subsection, for any two restricted $V$-modules $W^1$ and $W^2$,  we construct a generalized $V$-module $\H(W^1, W^2)$
along with a canonical intertwining operator $\Y^\H$ of type $ \binom{W^2}{\mathcal{H}(W^1, W^2)\ W^1}$. 
The resulting pair $\big( \H(W^1, W^2), \Y^\H  \big)$ satisfies a certain universal property.
We remark that, despite the difference in construction methods, our approach is inspired by the results in \cite{Li98}.

\begin{definition}
    
Let $V$ be a vertex operator algebra, and let $W^1, W^2$ be restricted $V$-modules.
We define $\mathcal{H}(W^1,W^2)$ to be the subspace of
$\operatorname{Hom}_{\mathbb{C}}\big(W^1, W^2[\log t]\{ t \}\big)$
spanned by all operators of the form $\mathcal{Y}_A(a,t)$,
where $A$ ranges over all restricted $V$-modules,
$\mathcal{Y}_A$ ranges over all intertwining operators of type $\binom{W^2}{A\;W^1}$,
and $a$ ranges over $A$. 
We say that $ \H(W^1,W^2)$ is the {\bf internal Hom} from $ W^1$ to $ W^2$.

\end{definition}

\begin{remark}
Although the internal Hom is a concept in tensor categories, we adopt this terminology here because, under suitable assumptions, 
Theorem \ref{inter-hom-th} identifies $\mathcal{H}(W^1, W^2)$ with the internal Hom 
in the tensor category of restricted $V$-modules.

\end{remark}

Let 
\[
\sum_{i=1}^n\lambda_i \Y_{A^i}(a^i, t) \in  \mathcal{H}(W^1,W^2),
\]
where for each index $i$, $\lambda_i \in \C$,  $A^i$ is a restricted $V$-module, $a^i \in A^i$, and $\Y_{A^i} \in \mathcal{I}\binom{W^2}{A^i \ W^1}$.
Set
\[
A= A^1 \oplus \cdots \oplus A^n, \ \ a=( \lambda_1 a^1, \cdots, \lambda_n a^n) \in A ,  \ \ \ \text{and} \ \ \ \Y_A= \Y_{A^1} \oplus \cdots \oplus \Y_{A^n}. 
\]
Then $A$ is also a restricted $V$-module, and $\Y_A$ is an intertwining operator of type $\binom{W^2}{A\; W^1}$.
Moreover,
\[
\sum_{i=1}^n\lambda_i \Y_{A^i}(a^i, t) =\Y_A(a, t).
\]
It follows that every element of $\H(W^1, W^2)$ can be written in the form $\Y_A(a, t)$ for some restricted $V$-module $A$,
some $a \in A$, and some intertwining operator $ \Y_A \in  \mathcal{I}\binom{W^2}{A \ W^1}$.

We emphasize that there might exist non-isomorphic restricted $V$-modules $A$ and $B$, 
elements $a \in A$ and $b \in B$, and intertwining operators 
\[
\Y_A \in \mathcal{I}\binom{W^2}{A\ W^1}, \ \  \Y_B \in \mathcal{I}\binom{W^2}{B\ W^1}
\]
such that $\Y_A(a, t)$ and $\Y_B(b, t)$ represent the same element in $\mathcal{H}(W^1, W^2)$.

\vspace{0.5\baselineskip} 

From now on, to simplify notation, 
writing $\mathcal{Y}_A(a, t) \in \mathcal{H}(W^1,W^2)$ will imply that $A$ is a restricted $V$-module, $a \in A$, 
and $\mathcal{Y}_A$ is an intertwining operator of type $\binom{W^2}{A\;W^1}$.

    For $u \in V, n \in \Z$, and $\Y_{A}(a, t) \in  \mathcal{H}(W^1,W^2)$, 
    we define
    \[
    u^{\mathcal{H}}_n  \Y_{A}(a, t) =  \Y_A(u_na, t) \in \mathcal{H}(W^1,W^2).
    \]
	Suppose that $\Y_A(a, t) $ and $\Y_B(b, t) $ represent the same element in $\H(W^1, W^2)$.
	Then we have
	\begin{align*} \label{unY}
		u^{\mathcal{H}}_n  \Y_{A}(a, t) 
		  & = \text{Res}_{x} \Bigl\{ (x-t)^n Y_{W^2}(u, x) \Y_{A}(a, t) 
		- (-t+x)^n \Y_{A}(a, t)  Y_{W^1}(u, x) \Bigr\} \notag \\ 
		&= \text{Res}_{x} \Bigl\{ (x-t)^n Y_{W^2}(u, x)   \Y_{B}(b, t)
		- (-t+x)^n \Y_{B}(b, t) Y_{W^1}(u, x) \Bigr\} \notag \\
		& =\Y_{B}(u_nb, t) = u^{\mathcal{H}}_n  \Y_{B}(b, t).
	\end{align*}
	Hence, the action of $u_n^{\mathcal{H}}$ is well-defined.
	
	For any $u \in V$, set 
	$$Y_{\mathcal{H}}(u, x)= \sum_{n \in \Z } u^{\mathcal{H}}_n  x^{-n-1}.$$ 
	Then, for any $\Y_A(a, t) \in \H(W^1, W^2)$, we have 
    \begin{align*}
    Y_{\mathcal{H}}(u, x)  \Y_A(a,t) &= \sum_{n \in \Z}u_n^{\mathcal{H}} \Y_A(a, t)  x^{-n-1}= \sum_{n \in \Z } \Y_A(u_na ,t)  x^{-n-1}\\
     &= \Y_A \big(Y_A(u, x)a, t \big) \in \H(W^1, W^2)((x)).
    \end{align*}
	Following \cite{Li98}, we further define a linear map
	\[
	\Y^{\H}(-,x): \H(W^1,W^2) \otimes W^1 \to W^2 [\log x] \{x\}  
	\]
	by
	\[
	\Y^{\H} \big(\Y_A(a, t), x \big)w^1=\Y_A(a, x)w^1.
	\]
	Note that $\Y^{\H}$ is likewise well-defined. 
    Moreover, $\Y^{\H}(-,x)-$ is injective in its first argument, a property that we will use repeatedly in what follows.

\vspace{0.5\baselineskip} 

Despite the difference in construction, parts (1) and (2) of Theorem \ref{interHom1} below may be viewed as counterparts of Theorems 4.6 and 4.7 in \cite{Li98}, respectively.
% These latter results will be subsumed by Theorem \ref{Th-G} later.

\begin{theorem} \label{interHom1}
Let $V$ be a vertex operator algebra, and let $W^1, W^2$ be restricted $V$-modules. 
Then the following statements hold:

\begin{enumerate}[(1)]
  
    \item  With the above definitions,  $\mathcal{H}(W_1,W_2)$ is a generalized $V$-module, and 
	   $\mathcal{Y}^{\mathcal{H}}(-,x) $ is an intertwining operator of type 
	   $ \binom{W^2}{\mathcal{H}(W^1, W^2)\ W^1}$; 

    \item The pair \(\big(\mathcal{H}(W^1, W^2),\mathcal{Y}^{\mathcal{H}}\big)\) satisfies the following {\em universal property}: For any generalized $V$-module $W$ that is sum of its restricted submodules and any intertwining operator $\Y_W$ of type $\binom{W^2}{W \ W^1}$, there exists a unique $V$-module homomorphism
		\[
        \varphi_W \colon W \to \mathcal{H}(W^1,W^2)
        \]
        such that
		\[
		\mathcal{Y}^{\mathcal{H}}\big(\varphi_W(w),x\big) = \mathcal{Y}_W(w,x)
		\]
		for all \(w\in W\). Consequently, the map
		\vspace{-0.5\baselineskip}
		\[
		\operatorname{Hom}_{V} \big(W ,  \mathcal{H}(W^1, W^2) \big) \longrightarrow 
		\mathcal{I}\binom{W^2}{W \; W^1}, \quad
		\varphi \longmapsto \mathcal{Y}^{\mathcal{H}}(\varphi(-), x), 
		\vspace{-0.5\baselineskip}
		\]
		is a linear isomorphism;

  \item  $ \H(W^1, W^2) $ is the sum of its restricted \(V\)-submodules.

\end{enumerate}

\end{theorem}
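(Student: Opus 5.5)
The plan is to reduce everything to the fact that each element $\mathcal{Y}_A(a,t)$ is the image of $a$ under a linear map
\[
\pi_{A,\mathcal{Y}_A}\colon A\to\mathcal{H}(W^1,W^2),\qquad b\mapsto \mathcal{Y}_A(b,t),
\]
and that these maps intertwine the $V$-actions: $u_n^{\mathcal{H}}\pi_{A,\mathcal{Y}_A}(b)=\pi_{A,\mathcal{Y}_A}(u_nb)$ by definition. Since the action $u_n^{\mathcal H}$ is already known to be well-defined, $\mathcal{H}(W^1,W^2)=\sum_{(A,\mathcal{Y}_A)}\operatorname{Im}\pi_{A,\mathcal{Y}_A}$ is a sum of images of restricted modules under maps commuting with all $u_n$.

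For (1), I will first check that $\mathcal H(W^1,W^2)$ is a weak module. Fix $h=\mathcal{Y}_A(a,t)$. Then $Y_{\mathcal H}(u,x)h=\pi(Y_A(u,x)a)\in\mathcal H((x))$ and $Y_{\mathcal H}(\mathbf 1,x)h=h$. The Jacobi identity for $Y_{\mathcal H}$ applied to $h$ is the image under $\pi$ of the Jacobi identity for $Y_A$ applied to $a$, because every operator appearing in it acts inside $A$ before applying $\pi$. For the generalized module structure, take $A$ restricted, so $a$ is a finite sum of generalized $L(0)$-eigenvectors. Since $\pi$ commutes with $L(0)$, $\pi(A_{(\lambda)})\subset \mathcal H_{(\lambda)}$, and hence $\mathcal H=\bigoplus_\lambda \mathcal H_{(\lambda)}$ (the sum is direct for generalized eigenspaces). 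Next, $\mathcal{Y}^{\mathcal H}$ is an intertwining operator. Truncation, the Jacobi identity and the $L(-1)$-derivative property for $\mathcal{Y}^{\mathcal H}(h,x)w^1=\mathcal{Y}_A(a,x)w^1$ are exactly those of $\mathcal{Y}_A$, once we note that $Y_{\mathcal H}(v,x_0)h$ corresponds to $Y_A(v,x_0)a$ and $L(-1)h=\pi(L(-1)a)$. Linearity in $h$ is handled by the direct-sum trick recorded before the theorem. Part (3) is then immediate: each $\operatorname{Im}\pi_{A,\mathcal{Y}_A}$ is a quotient of the restricted module $A$. A quotient of a restricted module is restricted, since the weight spaces of the quotient are images of those of $A$ and lower truncation is inherited.

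For (2), let $W=\sum_i W_i$ with each $W_i$ a restricted submodule, and let $\mathcal{Y}_W\in\mathcal I\binom{W^2}{W\;W^1}$. Define $\varphi_W(w)=\mathcal{Y}_W(w,t)$. For $w\in W_i$, this equals $\mathcal{Y}_W|_{W_i}(w,t)$, an element of $\mathcal H$ because the restriction is an intertwining operator of type $\binom{W^2}{W_i\;W^1}$ with $W_i$ restricted. Since every $w\in W$ is a finite sum of such elements and $\mathcal{Y}_W(w,t)$ is linear in $w$, $\varphi_W$ lands in $\mathcal H$ and is well-defined regardless of the decomposition chosen. It is a $V$-homomorphism because $\varphi_W(u_nw)=\mathcal{Y}_W(u_nw,t)$, and by the residue formula from the Jacobi identity this equals $u^{\mathcal H}_n\mathcal{Y}_W(w,t)$. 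Note that we only need this residue formula, not that $W$ itself has a representative $A$. We have $\mathcal{Y}^{\mathcal H}(\varphi_W(w),x)=\mathcal{Y}_W(w,x)$ by construction. Uniqueness follows from injectivity of $\mathcal{Y}^{\mathcal H}$ in its first argument. For the linear isomorphism, use that composing an intertwining operator with a module map $\varphi$ in the first slot gives an intertwining operator (the lemma on $[\sigma_1,\sigma_2,\sigma_3]\circ\mathcal{Y}$). The map $\varphi\mapsto\mathcal{Y}^{\mathcal H}(\varphi(-),x)$ is injective by first-argument injectivity and surjective by the existence just shown.

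The main obstacle is the well-definedness and independence issues, since an element of $\mathcal H$ has many representatives $(A,\mathcal{Y}_A,a)$. Two points need care. First, the generalized-eigenspace decomposition of $\mathcal H$ must be intrinsic. I will handle this by noting that $L(0)$ acts locally finitely on $\mathcal H$ via any representative, so its generalized eigenspaces are intrinsic and images land in the right ones. Second, $\mathcal{Y}^{\mathcal H}$ must not depend on the representative. This holds because $\mathcal{Y}_A(a,x)$ is literally the element $h$ with $t$ renamed to $x$.
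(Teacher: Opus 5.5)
Your proposal is correct and follows essentially the same route as the paper. In both, the module and intertwining axioms for $\mathcal{H}(W^1,W^2)$ and $\mathcal{Y}^{\mathcal{H}}$ are pulled back along $b\mapsto \mathcal{Y}_A(b,t)$, and $\varphi_W(w)=\mathcal{Y}_W(w,t)$ gives existence, with uniqueness from first-argument injectivity of $\mathcal{Y}^{\mathcal{H}}$. Part (3) then follows because each image of a restricted $A$ is a quotient of a restricted module. The differences are cosmetic: you treat $w\in\sum_i W_i$ by linearity instead of choosing a single restricted submodule containing $w$, and you spell out the weight-space decomposition and the Hom--intertwiner bijection in more detail than the paper does.
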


\begin{proof}

	(1) Let \((A,Y_A)\) be a restricted \(V\)-module, \(a\in A\), and 
    $\mathcal{Y}_A \in \mathcal{I}\binom{W^2}{A\ W^1}$.
	By the definition of \(\mathcal{Y}^{\mathcal{H}}\) and the Jacobi identity for the \(V\)-module \(A\), we have for all \(u,v\in V\),
	\begin{align*}
		& x_0^{-1} \delta\left(\frac{x_1-x_2}{x_0}\right) Y_{\mathcal{H}}(u,x_1) Y_{\mathcal{H}}(v,x_2) \mathcal{Y}_A(a,t) \\
		& - x_0^{-1} \delta\left(\frac{-x_2+x_1}{x_0}\right) Y_{\mathcal{H}}(v,x_2) Y_{\mathcal{H}}(u,x_1) \mathcal{Y}_A(a,t) \\
		& = x_0^{-1} \delta\left(\frac{x_1-x_2}{x_0}\right) \mathcal{Y}_A\Big(Y_A(u,x_1)Y_A(v,x_2)a,t\Big) \\
		& - x_0^{-1} \delta\left(\frac{-x_2+x_1}{x_0}\right) \mathcal{Y}_A\Big(Y_A(v,x_2)Y_A(u,x_1)a,t\Big) \\
		& = x_2^{-1} \delta\left(\frac{x_1-x_0}{x_2}\right) \mathcal{Y}_A\Big(Y_A\big (Y(u,x_0)v,x_2 \big)a,t\Big) \\
		& = x_2^{-1} \delta\left(\frac{x_1-x_0}{x_2}\right) Y_{\mathcal{H}}\big(Y(u,x_0)v,x_2\big) \mathcal{Y}_A(a,t).
	\end{align*}
	Thus, the Jacobi identity holds for \(Y_{\mathcal{H}}\). The vacuum property and the lower truncation property are straightforward to check.
    Thus, \(\mathcal{H}(W^1,W^2)\) is a weak \(V\)-module. The $L(0)$ action on $\H(W^1, W^2)$ immediately implies that it is a generalized $V$-module.
	
	 By the definitions of \(\mathcal{Y}^{\mathcal{H}}\) and \( Y_{\H}\), together with the Jacobi identity for \(\mathcal{Y}_A\), we obtain
	\begin{align*}
		& x_0^{-1} \delta\left(\frac{x_1-x_2}{x_0}\right) Y_{W^2}(u,x_1) \mathcal{Y}^{\mathcal{H}}\big(\mathcal{Y}_A(a,t),x_2\big) \\
		& - x_0^{-1} \delta\left(\frac{-x_2+x_1}{x_0}\right) \mathcal{Y}^{\mathcal{H}}\big(\mathcal{Y}_A(a,t),x_2\big) Y_{W^1}(u,x_1) \\
		& = x_0^{-1} \delta\left(\frac{x_1-x_2}{x_0}\right) Y_{W^2}(u,x_1)\mathcal{Y}_A(a,x_2) 
		- x_0^{-1} \delta\left(\frac{-x_2+x_1}{x_0}\right) \mathcal{Y}_A(a,x_2)Y_{W^1}(u,x_1) \\
		& = x_2^{-1} \delta\left(\frac{x_1-x_0}{x_2}\right) \mathcal{Y}_A\big(Y_A(u,x_0)a,x_2\big) \\
        & = x_2^{-1} \delta\left(\frac{x_1-x_0}{x_2}\right) Y_{\H}(u,x_0)\Y_A(a, x_2)\\
		& = x_2^{-1} \delta\left(\frac{x_1-x_0}{x_2}\right) \mathcal{Y}^{\mathcal{H}}\Big(Y_{\mathcal{H}}(u,x_0)\mathcal{Y}_A(a,t),x_2 \Big).
	\end{align*}
	The lower-truncation property and \(L(-1)\)-derivation property of \(\mathcal{Y}^{\mathcal{H}}\) are trivially satisfied.
	Hence, \(\mathcal{Y}^{\mathcal{H}}\) is an intertwining operator.

	\vspace{0.5\baselineskip}

(2) By the definition of $W$, for any $ w \in W$, there exists a restricted submodule $A \subseteq W$ with $w \in A$. 
Hence
\[
\mathcal{Y}_W(w, t) = \mathcal{Y}_W|_A(w,t) \in \mathcal{H}(W^1,W^2).
\]
We then define a linear map
\[
\varphi_W \colon W \to \mathcal{H}(W^1,W^2)
\]
by
\[
\varphi_W(w) = \mathcal{Y}_W(w,t) \in \mathcal{H}(W^1,W^2).
\]
%%Clearly, the definition of $\varphi_W$ is independent of the choice of the restricted $V$-module $A$. 
From the definition of the actions $u^{\mathcal{H}}_n$, it follows that $\varphi_W$ is a $V$-module homomorphism.

Moreover, by the definition of $\mathcal{Y}^{\mathcal{H}}$, we have
\[
\mathcal{Y}^{\mathcal{H}}\big(\varphi_W(w),x\big)
= \mathcal{Y}^{\mathcal{H}}\big(\mathcal{Y}_W(w,t),x\big)
=\mathcal{Y}_W(w,x)
\]
for any $w \in W$. Since $\mathcal{Y}^{\mathcal{H}}(-,x)$ is injective in its first argument, such a homomorphism $\varphi_W$ is unique.

	\vspace{0.5\baselineskip} 

    (3) Take an arbitrary $\Y_A(a, t) \in \H(W^1, W^2),$  
    where $A$ is a restricted $V$-module, $a \in A$, and $\Y$ is an intertwining operator of type $ \binom{W^2}{A \ W^1}.$
   Then, by (2) we have
   \[
   \Y_A(a, t) = \varphi_A(a) \in \operatorname{Im}\varphi_A. 
   \] 
   Since \(\operatorname{Im}\varphi_A\) is a homomorphic image of the restricted \(V\)-module \(A\), it is itself a restricted \(V\)-module. Consequently, \(\mathcal{H}(W^1,W^2)\) coincides with the sum of its restricted \(V\)-submodules.
	This completes the proof.
\end{proof}

\begin{remark}

\begin{enumerate}[(1)]

\item   Although $\mathcal{H}(W^1, W^2)$ is the sum of its restricted $V$-submodules, 
this does not necessarily imply that it is itself a restricted $V$-module.

\item In the definition of \(\mathcal{H}(W^1,W^2)\), $A$ is taken to be a restricted $V$-module.
However, one may instead allow $A$ to be a generalized $V$-module, or even an arbitrary $V$-module. 
The resulting construction gives a larger weak $V$-module that contains $\mathcal{H}(W^1, W^2)$ 
and still satisfies the corresponding universal property.

\end{enumerate}
	
\end{remark}
Let $W^1, W^2$ be restricted $V$-modules.
We define a category $\mathcal{D}(W^1, W^2)$ whose objects are pairs $(W, \mathcal{Y})$, 
where $W$ is a generalized $V$-module that is the sum of its restricted submodules, 
and $\mathcal{Y}$ is an intertwining operator of type $\binom{W^2}{W\; W^1}$. 
A morphism $\varphi$ between $(W, \mathcal{Y})$ and $(\tilde{W}, \tilde{ \Y} )$ is $V$-module map from 
$\tilde{W}$ to $W$ such that $\tilde{\Y}(-, x)=\Y\big(\varphi(-) ,  x\big)$.
Then, the pair $\big( \H(W^1, W^2), \Y^\H\big)$ is an object in $ \mathcal{D}(W^1, W^2)$.
Theorem \ref{interHom1} (2) shows that $\left(\H(W^1, W^2), \mathcal{Y}^{\H}\right)$ serves as an initial object in $\mathcal{D}(W^1, W^2)$.
Consequently, this object is unique up to isomorphism in \(\mathcal{D}(W^1, W^2)\), 
and it is appropriate to use the term 'universal property' in Theorem \ref{interHom1} (2). 

The following theorem shows that, in the category $ \mathcal{D}(W^1, W^2)$, 
in order to verify that a pair $  (M, \Y^*)$ satisfies the universal property, 
it suffices to check the condition for every pair $(A, \mathcal{Y})$ where $A$ is restricted.

\begin{theorem} \label{Universal-H}

Let $(M, \Y^*) \in \mathcal{D}(W^1, W^2).$
Then the pair $( M, \Y^*) $ is an initial object in $\mathcal{D}(W^1, W^2)$ if and only if 
$(M, \Y^*) $ satisfies the following property: 
for any restricted $V$-module $A$ and any intertwining operator $\Y_A$ of type $\binom{W^2}{A \ W^1}$,
there exists a unique $V$-module homomorphism $\phi_A: A \to M$ such that
\[
\Y^* \big( \phi_A(a), x\big) = \mathcal{Y}_A(a,x)
\]
for all \(a \in A\).
\end{theorem}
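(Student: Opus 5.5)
The forward implication is immediate. A restricted $V$-module $A$ is the sum of its restricted submodules (namely itself), so $(A,\Y_A)$ is an object of $\mathcal{D}(W^1,W^2)$. If $(M,\Y^*)$ is initial, the unique morphism it provides is exactly the required unique $\phi_A$.

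For the converse, I would reduce to Theorem \ref{interHom1}. Assume $(M,\Y^*)$ has the stated property for restricted $A$. I will show $(M,\Y^*)\cong(\H(W^1,W^2),\Y^\H)$ in $\mathcal{D}(W^1,W^2)$; since $(\H(W^1,W^2),\Y^\H)$ is initial, so is $(M,\Y^*)$.

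Step 1 is to construct $\varphi_M\colon M\to\H(W^1,W^2)$ with $\Y^\H(\varphi_M(m),x)=\Y^*(m,x)$. This comes directly from Theorem \ref{interHom1}(2), because $M$ is a sum of restricted submodules.

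Step 2 is to construct a map $\psi\colon\H(W^1,W^2)\to M$ with $\Y^*(\psi(h),x)=\Y^\H(h,x)$. Write $\H(W^1,W^2)=\sum_A \operatorname{Im}\varphi_A$, as in Theorem \ref{interHom1}(3). For each restricted submodule $A\subseteq\H(W^1,W^2)$, the hypothesis applied to $\Y^\H|_A$ gives a unique $\phi_A\colon A\to M$ with $\Y^*(\phi_A(a),x)=\Y^\H(a,x)$.

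These maps must glue, and this compatibility is the main obstacle. For restricted submodules $A,B$, the sum $A+B$ is again restricted, being a quotient of $A\oplus B$. By uniqueness for $A+B$, $\phi_{A+B}|_A$ and $\phi_A$ both satisfy the defining condition on $A$, so uniqueness for $A$ forces $\phi_{A+B}|_A=\phi_A$. Hence $\phi_A$ and $\phi_B$ agree on $A\cap B$. Define $\psi(h)=\phi_A(h)$ for any restricted submodule $A\ni h$; this is well defined by the above. Linearity also follows: for $h_1\in A$ and $h_2\in B$, everything may be computed inside $\phi_{A+B}$. Thus $\psi$ is a $V$-module homomorphism satisfying $\Y^*(\psi(h),x)=\Y^\H(h,x)$.

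Step 3 is to check that $\varphi_M$ and $\psi$ are mutually inverse. On one side, $\Y^\H(\varphi_M\psi(h),x)=\Y^*(\psi(h),x)=\Y^\H(h,x)$, so $\varphi_M\psi=\mathrm{id}$ by injectivity of $\Y^\H$ in its first argument.

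On the other side, take $m\in M$ lying in a restricted submodule $B\subseteq M$. Both $\mathrm{id}_B$ (the inclusion $B\hookrightarrow M$) and $\psi\varphi_M|_B$ are homomorphisms $B\to M$ satisfying $\Y^*(\cdot,x)=\Y^*|_B$. The second one does because $\Y^*(\psi\varphi_M(b),x)=\Y^\H(\varphi_M(b),x)=\Y^*(b,x)$. By the uniqueness hypothesis applied to $(B,\Y^*|_B)$, the two maps agree, so $\psi\varphi_M=\mathrm{id}_M$.

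Therefore $\varphi_M$ is an isomorphism in $\mathcal{D}(W^1,W^2)$ compatible with the intertwining operators. Composing with the unique morphisms out of $(\H(W^1,W^2),\Y^\H)$ shows that $(M,\Y^*)$ is initial, with uniqueness transported through the isomorphism.
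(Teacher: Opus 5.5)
Your proof is correct, but it takes a different route from the paper.

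\textbf{The paper's route.} The paper proves the converse directly, without reference to $\H(W^1,W^2)$. It first shows that $\Y^*$ is injective in its first argument. If $\Y^*(m,x)=0$ with $m\neq 0$, then the inclusion and the zero map of a restricted submodule of the annihilator containing $m$ would both satisfy the defining identity, contradicting uniqueness. Then, for an arbitrary $(W,\Y)\in\mathcal{D}(W^1,W^2)$, it glues the maps $\phi_{A_w}$ over restricted submodules of $W$. Well-definedness comes from the injectivity of $\Y^*$.

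\textbf{Your route.} You instead use Theorem~\ref{interHom1} to compare $M$ with an initial object that is already known.
\begin{itemize}
\item You glue only over restricted submodules of $\H(W^1,W^2)$.
\item You handle compatibility with the $A+B$ trick plus uniqueness, which needs no injectivity of $\Y^*$.
\item You recover what amounts to injectivity of $\Y^*$ in the step $\psi\varphi_M=\mathrm{id}_M$. That step is the same ``inclusion versus another map'' uniqueness argument the paper uses.
\end{itemize}

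\textbf{What each buys.} The paper's argument is self-contained. It is really a general fact about $\mathcal{D}(W^1,W^2)$ and does not need an explicit initial object to exist. Yours depends on Theorem~\ref{interHom1}. In exchange, it directly produces the explicit isomorphism $\varphi_M\colon M\to\H(W^1,W^2)$ with $\Y^\H(\varphi_M(-),x)=\Y^*$. That identification is exactly how the theorem is used later, for instance in Example~\ref{ex-dual} and Theorem~\ref{H-Tensor}. Your $A+B$ gluing would also let the paper's direct argument skip the separate injectivity step.
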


\begin{proof}

Suppose the pair $(M, \mathcal{Y}^*)$ satisfies the above property. We first show that $\mathcal{Y}^*$ is injective in its first argument.
Assume for contradiction that $\mathcal{Y}^*(m, x) = 0$ for some nonzero element $m \in M$. Let
\[
K\overset{\mathrm{def}} { =} \{ k \in M \mid \mathcal{Y}^*(k, x) = 0 \} \subseteq M.
\]
Since $m \in K$, $K$ is a nonzero submodule of $M$. Choose a restricted $V$-submodule $A \subseteq K$ containing $m$. Both the inclusion map $i_A \colon A \to M$ and the zero map $0_A \colon A \to M$ satisfy
\[
\mathcal{Y}^*\big(i_A(a), x\big) = \mathcal{Y}^*\big(0_A(a), x\big) = \mathcal{Y}^*\big|_A(a, x) = 0
\]
for all $a \in A$. By the uniqueness assertion in the defining property of $(M, \mathcal{Y}^*)$, we have $i_A = 0_A$. This contradicts the fact that $A$ is nonzero. Therefore, $\mathcal{Y}^*$ is injective in its first argument.

We now verify that $(M, \mathcal{Y}^*)$ is an initial object of $\mathcal{D}(W^1, W^2)$. Take an arbitrary object $(W, \mathcal{Y}) \in \mathcal{D}(W^1, W^2)$. We shall prove that there exists a unique $V$-module homomorphism $f_W \colon W \to M$ such that
\[
\mathcal{Y}^*\big(f_W(w), x\big) = \mathcal{Y}(w, x)
\]
for all $w \in W$. As $\mathcal{Y}^*$ is injective in its first argument, such a homomorphism $f_W$ is unique whenever it exists.

We now construct the desired map $f_W$. For any $w \in W$, since $W$ is the sum of its restricted submodules, there exists a restricted $V$-submodule $A_w \subseteq W$ with $w \in A_w$. Then the restriction $\mathcal{Y}\big|_{A_w}$ is an intertwining operator of type $\binom{W^2}{A_w \ W^1}$. By the property of $(M, \mathcal{Y}^*)$, there exists a unique $V$-module homomorphism
\[
\phi_{A_w} \colon A_w \to M
\]
satisfying
\[
\mathcal{Y}^*\big(\phi_{A_w}(a), x\big) = \mathcal{Y}\big|_{A_w}(a, x) = \mathcal{Y}(a, x)
\]
for all $a \in A_w$.

Define a map $f_W \colon W \to M$ by setting $f_W(w) = \phi_{A_w}(w)$ for each $w \in W$. We claim that $f_W$ is well-defined. Let $B_w \subseteq W$ be another restricted $V$-submodule containing $w$. Again by the given property, there exists a unique $V$-module homomorphism
\[
\phi_{B_w} \colon B_w \to M
\]
such that
\[
\mathcal{Y}^*\big(\phi_{B_w}(b), x\big) = \mathcal{Y}\big|_{B_w}(b, x) = \mathcal{Y}(b, x)
\]
for all $b \in B_w$. It follows that
\[
\mathcal{Y}^*\big(\phi_{A_w}(w), x\big) = \mathcal{Y}^*\big(\phi_{B_w}(w), x\big).
\]

Since $\mathcal{Y}^*$ is injective in its first argument, we obtain $\phi_{A_w}(w) = \phi_{B_w}(w)$. Hence, $f_W$ is well-defined. Moreover, $f_W$ satisfies
\[
\mathcal{Y}^*\left(f_W(w), x\right) = \mathcal{Y}(w, x) \;  \left(= \mathcal{Y}^*\left(\phi_{A_w}(w), x\right) \right) 
\]
for any $w \in W$.

\vspace{0.2\baselineskip}

The proof is complete as the converse is immediate.

\end{proof}

\subsection{ Properties and Examples of Internal Hom }

In this subsection, we first establish the left exactness of $\mathcal{H}(W, -)$ and $\mathcal{H}(-, W)$ for a restricted module $W$.
Then, we give some computational examples of the internal Hom.

Let $W, W^1, W^2$ be restricted $V$-modules. 
Let $f: W^1 \to W^2 $ be a $V$-homomorphism.
Then the linear map 
\[
 f_W: \H(W, W^1) \to \H(W, W^2)
\]
defined by
\[
f_W \big( \Y_A(a, t) \big)= f \Y_A(a, t),  \;  \; \; \forall \;  \Y_A(a, t) \in \H(W, W^1),
\]
is well-defined, and is a $V$-module homomorphism.

\begin{proposition}
Let $W, W^1, W^2, W^3$ be restricted $V$-modules, and suppose 
\[
0 \to W^1 \xrightarrow{f}  W^2   \xrightarrow{g}   W^3 
\]
is an exact sequence of $V$-modules.
Then the induced sequence of generalized $V$-modules
\[
0 \to \H(W, W^1) \xrightarrow{f_W} \H(W, W^2) \xrightarrow{g_W} \H(W, W^3)
\]
is exact.
\end{proposition}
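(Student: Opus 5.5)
We must check three things: that $f_W$ is injective, that $g_W\circ f_W=0$, and that $\ker g_W\subseteq \operatorname{Im} f_W$. All three reduce to statements about intertwining operators, using the concrete description of elements of $\H(W,W^i)$ as $\Y_A(a,t)$. Here $A$ is a restricted $V$-module and $\Y_A\in\mathcal I\binom{W^i}{A\ W}$.

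\emph{Injectivity.} Suppose $f_W(\Y_A(a,t))=f\circ\Y_A(a,t)=0$. Then $f(\Y_A(a,x)w)=0$ for all $w\in W$. Since $f$ is injective and acts coefficientwise in $x$ and $\log x$, every coefficient of $\Y_A(a,x)w$ vanishes. Hence $\Y_A(a,t)=0$ as an element of $\operatorname{Hom}_\C(W,W^1[\log t]\{t\})$.

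\emph{Composition.} For any element we have $g_Wf_W(\Y_A(a,t))=(g\circ f)\Y_A(a,t)=0$, because $g\circ f=0$.

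\emph{Exactness in the middle.} This is the only step requiring a construction. Take $\Y_A(a,t)\in\H(W,W^2)$ with $g\circ\Y_A(a,t)=0$. We cannot conclude directly that $\Y_A$ takes values in $f(W^1)$, since only the single element $a$ is known to be killed. The fix is to shrink $A$. Let $A_0\subseteq A$ be the submodule generated by $a$; it is restricted, being a submodule of the restricted module $A$. Let $M=f(W^1)=\ker g$, which is a submodule of $W^2$. By hypothesis, $\Y_A(a,x)w\in M[\log x]\{x\}$ for all $w\in W$. Lemma \ref{gen-submodule-lemma} then gives $\Y_A(a',x)w\in M[\log x]\{x\}$ for all $a'\in A_0$ and $w\in W$. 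Thus the restriction $\Y_A|_{A_0}$ is an intertwining operator of type $\binom{M}{A_0\ W}$.

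We then compose with $f^{-1}:M\to W^1$, which is a $V$-module isomorphism. By the lemma on $[\sigma_1,\sigma_2,\sigma_3]\circ\Y$, this yields $\tilde\Y:=[\mathrm{id},\mathrm{id},f^{-1}]\circ\Y_A|_{A_0}\in\mathcal I\binom{W^1}{A_0\ W}$. Therefore $\tilde\Y(a,t)\in\H(W,W^1)$, and
\[
f_W\big(\tilde\Y(a,t)\big)=f\circ f^{-1}\circ\Y_A(a,t)=\Y_A(a,t).
\]

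Finally, $f_W$ and $g_W$ are $V$-module homomorphisms (already noted), so the sequence is an exact sequence of generalized $V$-modules.
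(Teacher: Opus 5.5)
Your proof is correct and follows the paper's argument essentially step for step. The paper also reduces to the case where $A$ is generated by $a$, applies Lemma \ref{gen-submodule-lemma} with $M=\operatorname{Im} f=\operatorname{Ker} g$ to see that $\Y_A$ takes values in $\operatorname{Im} f$, and then composes with $f^{-1}$ to obtain a preimage in $\H(W,W^1)$.
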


\begin{proof}

It is trivial to verify that $f_W$  is injective and $\operatorname{Im}( f_W) \subset \operatorname{Ker} (g_W)$.
To prove that $ \operatorname{Ker} (g_W) \subset \operatorname{Im}( f_W) $, take $\Y_A(a, t) \in \operatorname{Ker} (g_W)$. 
Without loss of generality, we may assume that $A$ is generated by a single element $a$.
Since $g\Y_A(a,t)w=0$ for all $w\in W$ and $\operatorname{Im} f=\operatorname{Ker} g$,
we have 
\[
 \Y_A(a, t)w \in \operatorname{Im}(f)[\log x]\{x\}, \; \; \forall \; w \in W.
 \]
By Lemma \ref{gen-submodule-lemma}, it follows that
\[
  \Y_A( e , t)w \in \operatorname{Im}(f)[\log x]\{x\}, \; \; \forall \; e \in A,  \;  w \; \in W.
\]
Consequently, $\Y_A \in \mathcal{I}\binom{ \operatorname{Im}f }{A \ W}$.
Let $f^{-1}: \operatorname{Im} f \to W^1 $  denote the inverse isomorphism of $f$ on $\operatorname{Im} f$.
Then we have $f^{-1}\Y_A(a, t) \in \H(W, W^1),$ and
\[ 
 f_W( f^{-1}\Y_A(a, t) ) =\Y_A(a, t) \in  \operatorname{Im}( f_W).
\]
Hence, $ \operatorname{Ker} (g_W) \subset \operatorname{Im}( f_W) $. This completes the proof.

\end{proof}
\begin{remark}
    In Section \ref{sec5}, we are going to realize $\H(W^1, -)$ as a right adjoint of the tensor product functor $-\boxtimes W^1$ in some suitable category. In this category, it is automatically left exact by general categorical arguments.
\end{remark}

Similarly, let $W, W^1, W^2$ be restricted $V$-modules. 
Let $f: W^1 \to W^2 $ be a $V$-homomorphism.
Then the linear map 
\[
 f^W: \H(W^2, W) \to \H(W^1, W)
\]
defined by

\[
f^W \big( \Y_A(a, t) \big)=  \Y_A(a, t)f,  \;  \; \; \forall \;  \Y_A(a, t) \in \H(W^2, W),
\]
is well-defined, and is a $V$-module homomorphism.

\begin{proposition}
Let $W, W^1, W^2, W^3$ be restricted $V$-modules, and suppose 
\[
W^1 \xrightarrow{f}  W^2   \xrightarrow{g}  W^3 \to 0  
\]
is an exact sequence of $V$-modules. 
Then the induced sequence of generalized $V$-modules
\[
0 \to \H(W^3, W) \xrightarrow{g^W} \H(W^2, W) \xrightarrow{f^W} \H(W^1, W)
\]
is exact.
\end{proposition}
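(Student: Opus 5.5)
Three things are needed: $g^W$ is injective, $f^W\circ g^W=0$, and $\operatorname{Ker}(f^W)\subset \operatorname{Im}(g^W)$. Throughout, $f$ and $g$ are $V$-homomorphisms with $\operatorname{Im} f=\operatorname{Ker} g$, and $g$ is surjective.

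Injectivity of $g^W$ follows from the surjectivity of $g$. If $\Y_A(a,t)\in \H(W^3,W)$ satisfies $\Y_A(a,t)g=0$, then $\Y_A(a,x)g(w^2)=0$ for all $w^2\in W^2$, and since $g(W^2)=W^3$ this gives $\Y_A(a,t)=0$ as an element of $\operatorname{Hom}_{\C}(W^3,W[\log t]\{t\})$. The composite vanishes because $f^W g^W(\Y_A(a,t))=\Y_A(a,t)\,g f=0$.

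For $\operatorname{Ker}(f^W)\subset\operatorname{Im}(g^W)$, take $\Y_A(a,t)\in\H(W^2,W)$ with $\Y_A(a,t)f=0$. As in the preceding proposition, replace $A$ by the submodule generated by $a$; this is still restricted, and the restricted operator still represents the same element. Now consider the annihilator of $\operatorname{Im} f$ with respect to the first argument, and apply the dual form of Lemma \ref{gen-submodule-lemma}. The Jacobi identity gives
\[
\Y_A(u_na,x)w^2=\operatorname{Res}_{x_1}\Big\{(x_1-x)^nY_W(u,x_1)\Y_A(a,x)w^2-(-x+x_1)^n\Y_A(a,x)Y_{W^2}(u,x_1)w^2\Big\}.
\]
For $w^2\in\operatorname{Im} f$, both terms vanish: the first by hypothesis, and the second because $\operatorname{Im} f$ is a submodule, so $Y_{W^2}(u,x_1)w^2$ has coefficients in $\operatorname{Im} f$. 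Since $A$ is generated by $a$, it follows that $\Y_A(e,x)w^2=0$ for all $e\in A$ and all $w^2\in\operatorname{Im} f=\operatorname{Ker} g$.

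Hence $\Y_A$ factors through $W^2/\operatorname{Ker} g\cong W^3$. Define $\tilde\Y_A(e,x)w^3=\Y_A(e,x)w^2$ for any $w^2$ with $g(w^2)=w^3$; this is well defined by the vanishing just shown. Then $\tilde\Y_A$ inherits the truncation property, the Jacobi identity and the $L(-1)$-derivative property from $\Y_A$, because $g$ is a surjective module map and each identity can be checked on lifts. Thus $\tilde\Y_A\in\mathcal I\binom{W}{A\ W^3}$, $\tilde\Y_A(a,t)\in\H(W^3,W)$, and $g^W(\tilde\Y_A(a,t))=\Y_A(a,t)$.

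The main point of care is this last step: verifying that the Jacobi identity descends to the quotient, which uses $g\circ Y_{W^2}(u,x_1)=Y_{W^3}(u,x_1)\circ g$. The rest is routine.
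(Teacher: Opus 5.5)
Your proposal is correct and follows essentially the same route as the paper. You reduce to $A$ generated by $a$, then use the Jacobi identity (the content of Lemma \ref{gen-submodule-lemma}, applied to $\Y_A$ restricted to the submodule $\operatorname{Im} f$) to extend the vanishing to all of $A$, and finally descend the operator through $W^2/\operatorname{Im} f\cong W^3$. The only cosmetic difference is that you define the descended operator on $W^3$ directly via lifts, whereas the paper first passes to the quotient and then composes with $\bar g^{-1}$.
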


\begin{proof}

It is trivial to verify that $g^W$  is injective and $\operatorname{Im}( g^W) \subset \operatorname{Ker} (f^W)$.
To prove that $\operatorname{Ker} (f^W)  \subset   \operatorname{Im}( g^W)$, take $\Y_A(a, t) \in \operatorname{Ker} (f^W)$. 
Without loss of generality, we may assume that $A$ is generated by a single element $a$.
Then we have
\[
\Y_A(a, t)f(w^1)=0, \; \; \forall \; w^1 \in W^1.
\]
By Lemma \ref{gen-submodule-lemma}, this vanishing extends to all of $A$:
\[
\Y_A(e,t)f(w^1)=0, \; \; \forall \; e \in A,   \;  w^1 \in W^1.
\]
This condition allows us to define an intertwining operator $\bar{\Y}_A \in \mathcal{I} \binom{W}{A \ W^2/\operatorname{Im}f } $  
via the rule
\[
\bar{\Y}_A(e, t)(w^2 + \text{Im }f )= \Y_A(e, t)w^2, \; \; \forall \; e \in A, \;  w^2 \in W^2. 
\]

By exactness, $\operatorname{Im} f = \operatorname{Ker} g$,
so $g$ induces a canonical isomorphism $\bar{g}: W^2/ \operatorname{Im}f  \to W^3$; write \(\bar{g}^{-1}\) for its inverse.
Then
\[
\bar{\Y}_A(-, x)\bar{g}^{-1} \in \mathcal{I} \binom{W}{A \ W^3 }
\]
and
\[
g^W \big(  \bar{\Y}_A(a, t)\bar{g}^{-1}   \big)=\Y_A(a, t).
\]
Hence $\operatorname{Ker} (f^W)  \subset   \operatorname{Im}( g^W)$. The proof is complete.

\end{proof}

The following result was established in \cite{Li98} for the case without logarithmic terms. We give an alternative proof here.

  \begin{example}
    Let $ (W,Y_W) $ be a restricted $V$-module. Then  $ \H(V,W)\cong W $. 
    \end{example}
       
    \begin{proof}
    	We show that the pair $ (W,\Omega_{0}Y_W) $ satisfies the universal property of $ \mathcal{H}(V,W) $, 
        which implies $ \mathcal{H}(V,W)\cong W$. By Theorem \ref{Universal-H}, we only need to verify the universal property for restricted modules.
        For any restricted $V$-module $(H, Y_H)$, and $\Y \in  \mathcal{I}\binom{W}{H\ V}$,
    	we will show that there exists a unique $V$-module homomorphism $\varphi:H \to W$ such that
    	\begin{align} \label{id2}
    		\mathcal{Y}(h,x) = \Omega_{0}Y_W\big(\varphi(h),x\big)
    	\end{align}
    	for all $h\in H $.
    	
    	Note that $ \Omega_{-1}\mathcal{Y} $ is an intertwining operator of type $ \binom{W}{V\; H}$.
        Since $ L(-1)\mathbf{1}=0$, we have
    	\[
    	\frac{d}{dx}\Omega_{-1}\mathcal{Y}(\mathbf{1},x)
    	=\Omega_{-1}\mathcal{Y}\big(L(-1)\mathbf{1},x\big)=0.
    	\]
    	Thus, $\Omega_{-1}\mathcal{Y}(\mathbf{1},x)$ is independent of $x$, 
        defining a constant linear map $\mathbf{1}_{-1;0}:H\to W$. We set
    	\[
    	\varphi = \Omega_{-1}\mathcal{Y}(\mathbf{1},x) = \mathbf{1}_{-1;0}:H\to W.
    	\]
    	
    	We first verify that $ \varphi $ is a $V$-module homomorphism. 
        Using the Jacobi identity for $ \Omega_{-1}\mathcal{Y} $ and the fact that $ Y(v,x)\mathbf{1} \in V[[x]] $ for all $ v\in V $, we compute:
    	\begin{align*}
    		& Y_W(v,x_1)\varphi - \varphi Y_H(v,x_2) \\
    		&= Y_W(v,x_1)\Omega_{-1}\mathcal{Y}(\mathbf{1},x_2) - \Omega_{-1}\mathcal{Y}(\mathbf{1},x_2)Y_W(v,x_1) \\
    		&= \operatorname{Res}_{x_0} x_0^{-1}\delta\!\left(\frac{x_1-x_2}{x_0}\right) Y_W(v,x_1)\Omega_{-1}\mathcal{Y}(\mathbf{1},x_2) \\
    		&\quad - \operatorname{Res}_{x_0} x_0^{-1}\delta\!\left(\frac{-x_2+x_1}{x_0}\right) \Omega_{-1}\mathcal{Y}(\mathbf{1},x_2)Y_W(v,x_1) \\
    		&= \operatorname{Res}_{x_0} x_2^{-1}\delta\!\left(\frac{x_1-x_0}{x_2}\right) \Omega_{-1}\mathcal{Y}\big(Y(v,x_0)\mathbf{1},x_2\big) \\
    		&= 0.
    	\end{align*}
    	This proves that \(\varphi:H\to W\) is indeed a \(V\)-module homomorphism.
    	
    	Since \(\Omega_{-1}\mathcal{Y}(\mathbf{1},x)\) is independent of \(x\) and already a \(V\)-module homomorphism, we further obtain for all \(v\in V\) and \(h\in H\):
    	\begin{align*}
    		& Y_W(v,x_1)\Omega_{-1}\mathcal{Y}(\mathbf{1},x_2)h \\    	
    		&= \operatorname{Res}_{x_2} x_2^{-1}\delta\!\left(\frac{x_1-x_0}{x_2}\right) Y_W(v,x_1)\Omega_{-1}\mathcal{Y}(\mathbf{1},x_2)h \\
    		&= \operatorname{Res}_{x_2} \Bigg( x_0^{-1}\delta\!\left(\frac{x_1-x_2}{x_0}\right) - x_0^{-1}\delta\!\left(\frac{-x_2+x_1}{x_0}\right) \Bigg)
    		Y_W(v,x_1)\Omega_{-1}\mathcal{Y}(\mathbf{1},x_2)h \\
    		&= \operatorname{Res}_{x_2} x_0^{-1}\delta\!\left(\frac{x_1-x_2}{x_0}\right) Y_W(v,x_1)\Omega_{-1}\mathcal{Y}(\mathbf{1},x_2)h \\
    		&\quad - \operatorname{Res}_{x_2} x_0^{-1}\delta\!\left(\frac{-x_2+x_1}{x_0}\right) \Omega_{-1}\mathcal{Y}(\mathbf{1},x_2)Y_H(v,x_1)h \\
    		&= \operatorname{Res}_{x_2} x_2^{-1}\delta\!\left(\frac{x_1-x_0}{x_2}\right) \Omega_{-1}\mathcal{Y}\big(Y(v,x_0)\mathbf{1},x_2\big)h.
    	\end{align*}
    	In the above identity, the left-hand side is independent of \(x_0\), while the right-hand side involves only nonnegative integer powers of \(x_0\). Setting \(x_0=0\) yields
    	\begin{align*}
    		Y_W(v,x_1)\Omega_{-1}\mathcal{Y}(\mathbf{1},x_2)h
    		&= \operatorname{Res}_{x_2} x_2^{-1}\delta\!\left(\frac{x_1}{x_2}\right) \Omega_{-1}\mathcal{Y}(v,x_2)h \\
    		&= \Omega_{-1}\mathcal{Y}(v,x_1)h.
    	\end{align*}
    	We thus arrive at
    	\[
    	Y_W(v,x)\varphi(h) = \Omega_{-1}\mathcal{Y}(v,x)h
    	\]
    	for all \(v\in V\) and \(h\in H\).
        It follows that
        \[
        \Y = \Omega_0\big([id, \varphi, id] \circ Y_W \big)= [\varphi, id, id] \circ (\Omega_0 Y_W).
        \]
        Hence for all $h \in H$, we have
    	\[
    	\Omega_{0}Y_W\big(\varphi(h),x\big)v = \mathcal{Y}(h,x)v,
    	\]
    	which is exactly the required identity (\ref{id2}).
    	
    	It remains to show the uniqueness of $\varphi$. Suppose \(\sigma:H\to W\) is another \(V\)-module homomorphism satisfying  the identity (\ref{id2}). 
    	Define \(f=\varphi-\sigma\). Then
    	\[
    	\Omega_{0}Y_W\big(f(h),x\big)v=0
    	\]
    	for all \(v\in V\) and \(h\in H\). This gives
    	\[
    	e^{xL(-1)}Y_W(\mathbf{1},e^{\pi i}x)f(h)=0,
    	\]
    	which forces \(f(h)=0\) for all \(h\in H\). Hence \(\varphi=\sigma\), establishing uniqueness.
    	This completes the proof.
    \end{proof}  
    
\begin{example} \label{ex-dual}
	Let $W^1$ and $W^2$ be restricted $V$-modules. Then we have
	\[
		\H(W^1, W^2) \cong \H \bigl((W^2)^\prime, (W^1)^\prime \bigr).
	\]
\end{example}

\begin{proof}
    We shall prove that the pair $\left(\H(W^1, W^2), A_0 \Y^{\H} \right)$ satisfies the same universal property of
     $\H \big( ( W^2)^\prime, (W^1)^\prime  \big)$ stated in Theorem \ref{Universal-H}, which yields the desired isomorphism.
     
    Let $W$ be a restricted $V$-module, and let $\Y$ be an intertwining operator of type $\binom{  (W^1)^\prime }{ W \ (W^2)^\prime  }$.
    Then $A_{-1}\Y$ is an intertwining operator of type $\binom{  W^2 }{ W \ W^1  }$.
    By the universal property of $\H(W^1, W^2)$, there exists a unique $V$-module homomorphism
	$\varphi_W \colon W \to \mathcal{H}(W^1,W^2)$ such that
		\[
		\mathcal{Y}^{\mathcal{H}}\big(\varphi_W(w),x\big) = A_{-1}\mathcal{Y}(w,x)
		\]
		for all \(w\in W\).
  Applying $A_0$ to both sides yields 
    \[
   A_0 \mathcal{Y}^{\mathcal{H}}\big(\varphi_W(w),x\big) =\mathcal{Y}(w,x)
    \]
   for all \(w\in W\). Suppose another $V$-module homomorphism $\phi_W: W \to \mathcal{H}(W^1,W^2)$ satisfies the same identity:
  \[
   A_0 \mathcal{Y}^{\mathcal{H}}\big(\phi_W(w),x\big) =\mathcal{Y}(w,x)
  \]
    for all \(w\in W\).   
  Applying $A_{-1}$ to this identity yields 
  \[
  \mathcal{Y}^{\mathcal{H}}\big(\phi_W(w),x\big) = A_{-1}\mathcal{Y}(w,x)
  \]
 for all \(w\in W\). 
 So the uniqueness of $\varphi_W$ forces $\phi_W=\varphi_W$.
 The required universal property is verified. This completes the proof.
\end{proof}

\begin{remark}
Example \ref{ex-dual} can also be proved directly. Define a map 
\[
\varphi: \H(W^1, W^2) \to \H \bigl((W^2)^\prime, (W^1)^\prime \bigr)
\]
by
\[
\varphi (\Y_A(a, t))= A_0 \Y(a, t)   \; \; \; \text{for} \ \forall \; \Y_A(a, t) \in  \H(W^1, W^2).
\]
Since the operator $A_0$ is invertible, it is straightforward to verify that $\varphi$ is a well-defined 
and is an isomorphism of $V$-modules.

\end{remark}

The following example is given in \cite{Li98};  here we present a different proof. 
In Chapter \ref{sec5}, we will generalize this result.

\begin{example}\label{ex:C_2-rational-iso}

	Let $V$ be a $C_2$-cofinite, rational vertex operator algebra with nonnegative integer weights.
	Let $ S_1, S_2,\dots, S_m$ be a complete set of representatives of isomorphism classes of irreducible ordinary $V$-modules.
	Let $ W^1, W^2 $ be irreducible ordinary $V$-modules.
	Then $ \mathcal{H}(W^1, W^2)'$  is a direct sum of finitely many irreducible ordinary modules and is isomorphic to $ W^1 \boxtimes (W^2)'$.
    
\end{example}

\begin{proof}

Under our assumptions, it is a standard fact that the fusion rules for any three ordinary $V$-modules are finite (see, e.g., \cite{Li99b, ABD04}), and the tensor product of any two ordinary modules exists (see, e.g. \cite{YZ26}).

Since $\H(W^1, W^2)'$ is a semi-simple $V$-module, 
it is completely determined up to isomorphism by the multiplicities of the simple modules $S_i$ occurring in it.
By the universal property of $\big( \H(W^1,W^2), \Y^\H \big)$ (see Theorem \ref{interHom1} (2) ),
each simple module $S_i$ appears in $ \mathcal{H}(W^1,W^2)$ with multiplicity equal to the fusion rule $N(S^i, W^1; W^2)$.
Consequently, 
\[
\mathcal{H}(W^1,W^2) \cong \bigoplus_{i=1}^m S_i^{\oplus N(S_i, W^1; W^2)}
\]
is a direct sum of finitely many irreducible ordinary modules. 

Similarly, by the universal property of $W^1 \boxtimes (W^2)^\prime$, we have
\[
W^1 \boxtimes (W^2)^\prime \cong \bigoplus_{i=1}^m S_i^{\oplus N\big(W^1,(W^2)';S_i\big)}.
\]
By the properties of fusion rules, $N(W^1, (W^2)'; S_i) = N(S_i', W^1; W^2)$. Therefore,
	\begin{align*}
        \H(W^1, W^2)^\prime & \cong \left( \bigoplus_{i=1}^m S_i^{\oplus N\big(S_i,W^1;W^2\big)} \right)' \\
        & \cong  \left( \bigoplus_{i=1}^m (S_i')^{\oplus N\big(S_i',W^1;W^2\big)} \right)'  
        \cong  \bigoplus_{i=1}^m S_i^{\oplus N\big(S_i',W^1;W^2\big)}\\
        & \cong \bigoplus_{i=1}^m S_i^{\oplus N\big(W^1,(W^2)';S_i\big)} \cong W^1 \boxtimes (W^2)^\prime. \
	\end{align*}
This completes the proof.
\end{proof}

\subsection{ Relation to Li's $\Delta(W^1, W^2)$}

Li introduced $\Delta(W^1,W^2)$ via generalized intertwining operators in \cite{Li98}.
It is clear from the definition that $\Delta(W^1, W^2)$ can be naturally extended to the logarithmic setting.
For our purpose, we recall the (logarithmic) generalized intertwining operators from \cite{Li98, M08}.

%For this purpose, we recall the logarithmic version of generalized intertwining operators from \cite{M02}.

	\begin{definition} \label{G-inter}
		Let $V$ be a vertex operator algebra, and let $W^1$ and $W^2$ be weak $V$-modules.   
		A {\bf generalized logarithmic intertwining operator} from $W^1$ to $W^2$ is a linear map 
		\begin{align*}
			\Phi(t): \ \ &   W^1  \  \to  \ W^2 [\log t]  \{ t \} \\
			& w \ \to \  \Phi(t)w= \sum_{n \in \C} \sum_{ k=0}^{p_n} \Phi_{n;k} w t^{-1-n} (\log t)^k \in  W^2 [\log t]  \{t\},
		\end{align*}
		where $p_n\in \N$, satisfying the following conditions:
		
		\begin{enumerate}[{(G1)}]
			\item For each $w^1 \in W^1$ and $h \in \C,$ $\Phi_{h+n;k}w^1=0$  for all sufficiently large integers $n$, independently $k$. 
			
			\item $[L(-1), \Phi(t)]= \frac{d}{dt}\Phi(t)$;
			
			\item For any $v \in V$, there exists a nonnegative integer $k_{\Phi,v}$ such that
			\[
			(x -t)^{k_{\Phi,v}} Y_{W^2}(v, x)\Phi(t) = (x- t)^{k_{\Phi,v}} \Phi(t) Y_{W^1}(v, x). 
			\]
		\end{enumerate}	
        
Denote the space of all generalized logarithmic intertwining operators from $W^1$ to $W^2$ by $\G(W^1, W^2)$.
\end{definition}
	
Let $A$ be a weak $V$-module, and let $\Y_A$ be an intertwining operator of type  $\binom{W^2}{A \ W^1}$. 
Then $\Y_A(a, t) \in  \G(W^1, W^2) $ for all $a \in A$.
Assume now that $W^1, W^2$ are restricted $V$-modules. It follows that
\[
\H(W^1, W^2) \subset \G(W^1, W^2).
\]
	
As in \cite{Li98}, for $u \in V, n \in \Z$, and $\Phi(t) \in \G(W^1, W^2) $, we define
	\[
	u^{\G}_n \Phi(t) = \text{Res}_{x} \Big( (x-t)^n Y_{W^2}(u, x) \Phi(t)- (-t+x)^n\Phi(t)Y_{W^1}(u, x) \Big),
	\]
and set 
	\[
	Y_{\G}(u, x)\Phi(t)= \sum_{n \in \Z } u^{\G}_n \Phi(t) x^{-n-1}.
	\]
We further define the linear map
	\[
	\Y^{\G}(-,x): \G(W^1,W^2) \otimes W^1 \to W^2 [\log x] \{x\}  
	\]
by
	\[
	\Y^{\G} \big(\Phi(t), x \big)w^1=\Phi(x)w^1.
	\]

The theorem below, which appears in \cite{M08}, 
is the logarithmic analogue of Theorems 4.6 and 4.7 in \cite{Li98}.  
Its proof does not depend on whether logarithmic terms appear.
Recent work \cite{DLXY24} provides an alternative proof in the setting of twisted modules.
    
	\begin{theorem} \label{Th-G}
		Let $V$ be a vertex operator algebra, and let $W^1, W^2$ be weak $V$-modules. 
		Then the following statements hold:
		\begin{enumerate}[{(1)}]
    \item  With the above definitions, $\big(\G(W^1,W^2), Y_{\G} \big)$ forms a weak $V$-module,
       and $\Y^{\G}(-,x)$ is an intertwining operator of type 
			$\binom{W^2}{\G(W^1,W^2) \; W^1}$;
  
  \item The pair $ \big(\G(W^1,W^2), \Y^\G \big) $ satisfies the following universal property: 
        For any weak $V$-module $W$ and any intertwining operator $\Y_W$ of type $\binom{W^2}{W \ W^1}$, there exists a unique $V$-module homomorphism
			$\varphi_W \colon W \to \G(W^1, W^2)$ such that
			\[
			\mathcal{Y}(w, x)w^1 = \Y^{\G} \bigl(\varphi_W(w), x\bigr)w^1,
			\]
			for all $ w \in W,\ w^1 \in W^1  $.
		\end{enumerate}
	\end{theorem}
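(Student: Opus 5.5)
The plan is the standard Li-type argument (cf. \cite{Li98, M08}), carried out directly in the logarithmic setting.

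\textbf{Well-definedness and truncation.} For $\Phi(t)\in\G(W^1,W^2)$ and $u\in V$, I first check that $u^{\G}_n\Phi(t)$ satisfies (G1)--(G3).
\begin{itemize}
\item[(G1)] This follows from the truncation of $\Phi$ and the lower truncation of $Y_{W^1}(u,x)w^1$.
\item[(G2)] This follows from $[L(-1),Y_{W^2}(u,x)]=\frac{d}{dx}Y_{W^2}(u,x)$ and the identity $\frac{\partial}{\partial t}(x-t)^n=-\frac{\partial}{\partial x}(x-t)^n$, after an integration by parts in $\operatorname{Res}_x$.
\item[(G3)] This comes from the usual estimate for locality. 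Using the commutator formula, which follows from (G3) for $\Phi$ and weak associativity in $W^2$, I show that $(x_2-t)^{N}$ kills $[Y(v,x_2),\, u^{\G}_n\Phi(t)]$ for a suitable $N$ depending on $k_{\Phi,u}$, $k_{\Phi,v}$ and $n$.
\end{itemize}

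\textbf{Lower truncation of $Y_{\G}(u,x)\Phi(t)$.} I will show that $u^{\G}_n\Phi(t)=0$ for $n\ge k_{\Phi,u}$. When $n\ge k_{\Phi,u}$, we can write $(x-t)^n=(x-t)^{n-k}(x-t)^{k}$. Condition (G3) then makes the two terms inside the residue equal as formal series, so the difference of the binomial expansions has residue zero. Here one must be careful: the expansions $(x-t)^{n-k}$ and $(-t+x)^{n-k}$ agree because $n-k\ge 0$.

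\textbf{Jacobi identity for $Y_{\G}$.} This is the main obstacle. I would follow Li: prove weak commutativity and weak associativity for $Y_{\G}$, which is equivalent to the Jacobi identity for a vacuum-preserving, truncated vertex operator map.
\begin{itemize}
\item Weak associativity comes from a direct expansion of $u^{\G}_m v^{\G}_n\Phi(t)$ as a double residue. I then use the Jacobi identity in $W^1$ and $W^2$, together with (G3) for $\Phi$, to rewrite the iterate $(u_kv)^{\G}_{p}\Phi(t)$.
\item The key technical device is to multiply by $(x_1-t)^{k}(x_2-t)^{k}$ to clear the singular factors, and then divide back. This division is legitimate because the series involved are lower truncated in the appropriate variable.
\item Logarithms cause no new difficulty: all operators act coefficientwise on $W^2[\log t]$, and $t$-derivatives of $\log t$ only appear through (G2), which the argument does not use.
\end{itemize}
The vacuum property $\mathbf 1^{\G}_n\Phi=\delta_{n,-1}\Phi$ is immediate.

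\textbf{$\Y^{\G}$ is an intertwining operator.}
\begin{itemize}
\item The $L(-1)$-derivative property follows from the identity $L(-1)^{\G}\Phi(t)=\frac{d}{dt}\Phi(t)$. This in turn follows from (G2) and the computation of $\omega^{\G}_0$.
\item For the Jacobi identity, I substitute $t=x_2$. This identifies the right-hand side $\Y^{\G}(Y_{\G}(u,x_0)\Phi,x_2)$ with the residue expression defining $u^{\G}_n\Phi$. The left-hand side is then identified by the standard fact that (G3) plus this associativity formula give the Jacobi identity, as in \cite{Li98}.
\end{itemize}

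\textbf{Universal property.} Define $\varphi_W(w)=\Y_W(w,t)$. This lies in $\G(W^1,W^2)$ by the remark preceding the theorem.
\begin{itemize}
\item It is a homomorphism because the Jacobi identity for $\Y_W$, after taking $\operatorname{Res}_{x_0}x_0^n$, gives exactly $\Y_W(u_nw,t)=u^{\G}_n\Y_W(w,t)$.
\item Uniqueness holds because $\Y^{\G}$ is tautologically injective in its first argument: $\Phi(x)=0$ forces $\Phi=0$.
\end{itemize}
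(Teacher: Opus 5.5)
Your outline is correct and follows essentially the route the paper relies on: the paper does not reprove this theorem, but cites \cite{M08} (with \cite{Li98} and \cite{DLXY24}) and remarks that Li's argument is unaffected by logarithmic terms, which is exactly your plan. There are two small slips that do not affect the argument. First, the order $N$ that kills $[Y(v,x_2),u^{\G}_n\Phi(t)]$ also depends on the mutual locality order of $Y(u,\cdot)$ and $Y(v,\cdot)$ on $W^1$ and $W^2$ (equivalently, on the $u_jv$ with $j\ge 0$), not only on $k_{\Phi,u}$, $k_{\Phi,v}$ and $n$. Second, the homomorphism identity $\Y_W(u_nw,t)=u^{\G}_n\Y_W(w,t)$ requires applying $\operatorname{Res}_{x_1}\operatorname{Res}_{x_0}x_0^n$ to the Jacobi identity, not $\operatorname{Res}_{x_0}x_0^n$ alone.
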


We now extend the definition of $\Delta(W^1, W^2)$ from \cite{Li98} to the logarithmic setting.
\begin{definition}
	Let $V$ be a vertex operator algebra, and let $W^1$ and $W^2$ be restricted $V$-modules.
	Define $\Delta(W^1,W^2)$ to be the sum of all restricted $V$-submodules inside $\G(W^1,W^2)$.
\end{definition}

\begin{theorem} \label{delta=h}
Let $V$ be a vertex operator algebra, and let $W^1, W^2$ be restricted $V$-modules.
Then
 \[
 \big( \H(W^1, W^2), \ \Y^\H \big) = \big(\Delta(W^1, W^2), \  \Y^\G \big). 
 \]

\end{theorem}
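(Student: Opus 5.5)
We will show that $\H(W^1,W^2)$ and $\Delta(W^1,W^2)$ are literally the same subspace of $\G(W^1,W^2)$, and that the module structures and the operators $\Y^\H$, $\Y^\G$ agree on it. The argument has three steps: compare the vertex operators on the common subspace, prove $\H\subseteq\Delta$, and prove $\Delta\subseteq\H$.

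First we compare the structures on $\H(W^1,W^2)\subset\G(W^1,W^2)$. For $\Y_A(a,t)\in\H(W^1,W^2)$, the Jacobi identity for $\Y_A$, in the residue form already used in the paper to show that $u^{\H}_n$ is well defined, gives
\[
u^{\G}_n\Y_A(a,t)=\operatorname{Res}_x\Bigl\{(x-t)^nY_{W^2}(u,x)\Y_A(a,t)-(-t+x)^n\Y_A(a,t)Y_{W^1}(u,x)\Bigr\}=\Y_A(u_na,t)=u^{\H}_n\Y_A(a,t).
\]
So $\H(W^1,W^2)$ is a $V$-submodule of $\G(W^1,W^2)$, and $Y_\H$ is the restriction of $Y_\G$. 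Directly from the definitions, $\Y^\G(\Y_A(a,t),x)=\Y_A(a,x)=\Y^\H(\Y_A(a,t),x)$. Once the two subspaces are shown to be equal, the equality of pairs follows.

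For $\H\subseteq\Delta$, we use Theorem~\ref{interHom1}(3): $\H(W^1,W^2)$ is a sum of restricted submodules. By the first step these are also restricted submodules of $\G(W^1,W^2)$, so they lie in $\Delta(W^1,W^2)$.

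For $\Delta\subseteq\H$, let $A\subseteq\G(W^1,W^2)$ be any restricted submodule and take $\Phi(t)\in A$. By Theorem~\ref{Th-G}(1), the restriction $\Y^\G|_A$ is an intertwining operator of type $\binom{W^2}{A\ W^1}$. Hence $\Y^\G|_A(\Phi,t)\in\H(W^1,W^2)$. Since $\Y^\G(\Phi(t),x)=\Phi(x)$, we get $\Y^\G|_A(\Phi,t)=\Phi(t)$ as elements of $\operatorname{Hom}_\C(W^1,W^2[\log t]\{t\})$, so $\Phi(t)\in\H(W^1,W^2)$. Summing over all such $A$ gives $\Delta(W^1,W^2)\subseteq\H(W^1,W^2)$.

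The only delicate point is this last identification, namely that $\Y^\G|_A(\Phi,t)$, viewed as a formal series in $t$, is exactly $\Phi(t)$. This is immediate once we note that $\Y^\G$ simply renames the variable. Everything else rests on Theorem~\ref{Th-G}, which we take as given. Alternatively, the inclusion $\Delta\subseteq\H$ follows from the universal property in Theorem~\ref{interHom1}(2) applied to $(\Delta,\Y^\G|_\Delta)$: this gives a homomorphism $\varphi\colon\Delta\to\H$ with $\Y^\H(\varphi(\Phi),x)=\Phi(x)$, which forces $\varphi(\Phi)=\Phi$.
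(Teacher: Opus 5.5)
Your proposal is correct and follows essentially the same route as the paper. You show that $Y_\H$ is the restriction of $Y_\G$, spelling out via the iterate formula what the paper calls easy to see; then you use Theorem~\ref{interHom1}(3) to get $\H(W^1,W^2)\subseteq\Delta(W^1,W^2)$, and you restrict $\Y^\G$ to a restricted submodule containing $\Phi(t)$ to get $\Delta(W^1,W^2)\subseteq\H(W^1,W^2)$. Your alternative argument through the universal property in Theorem~\ref{interHom1}(2) is also valid, since $\Delta(W^1,W^2)$ is a generalized $V$-module that is a sum of its restricted submodules.
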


\begin{proof}

It is easy to see that the inclusion $\H(W^1, W^2) \subset \G(W^1, W^2)$, 
and the actions of $Y^\H$ and $Y^\G$ on $\H(W^1, W^2)$ coincide.
Since $ \mathcal{H}(W^1,W^2) $ is the sum of its restricted $V$-submodules, it follows that
$\H(W^1, W^2)$ is a submodule of $\Delta(W^1, W^2)$.

Now take an arbitrary element $\Phi(t) \in \Delta(W^1, W^2)$.
By the definition of $\Delta(W^1, W^2)$, there exists a restricted $V$-submodule $W \subseteq \Delta(W^1, W^2)$  containing $\Phi(t) $.
Consequently,
\[
    \Phi(t)=\Y^{\G}|_{W}\big(\Phi(x),t\big) \in \H(W^1,W^2).  
\]
This yields the equality of $V$-modules
\[
 \big( \H(W^1, W^2), \ \Y^\H \big) = \big(\Delta(W^1, W^2), \  \Y^\G \big). 
\]
The proof is complete.

\end{proof}

\section{The Finiteness of $\H(W^1, W^2)$  and Its applications} \label{sec5}

In this subsection, we show that $\mathcal{H}(W^1, W^2)$ is a restricted $V$-module assuming $C_1$-cofiniteness. 
Using this result, we show that $\mathcal{H}(W^1, W^2)^\prime$ and $W^1 \boxtimes (W^2)^\prime$ are isomorphic, which extends the result in Example \ref{ex:C_2-rational-iso}. 
In addition, $\mathcal{H}(W^1, W^2)$ can be interpreted as an internal Hom in some suitable tensor categories.

\subsection{The Finiteness of $\H(W^1, W^2)$ }

 \begin{theorem} \label{f-dim}
Let $W^1$ and $ (W^2)^\prime $ be $C_1$-cofinite restricted $V$-modules. 
Then $\H(W^1, W^2)$ is a restricted $V$-module. 
 Furthermore, its contragredient module $\mathcal{H}(W^1,W^2)'$ is also a $C_1$-cofinite restricted $V$-module.
\end{theorem}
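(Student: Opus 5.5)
The strategy is to compare $\H(W^1,W^2)$ with the tensor product $T:=W^1\boxtimes (W^2)'$, which by Theorem \ref{C1-tensor} exists and is a $C_1$-cofinite restricted $V$-module. From it we will build an injective $V$-homomorphism $\H(W^1,W^2)\hookrightarrow T'$. Once we have this, $\H(W^1,W^2)$ is a homogeneous submodule of a restricted module and hence restricted. Its contragredient is then a quotient of $T''\cong T$, so it is $C_1$-cofinite by Lemma \ref{C1-exact}(1).

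First we construct the map. Put $\Y^\boxtimes\in\mathcal I\binom{T}{W^1\;(W^2)'}$. Applying $A_{-1}\Omega_{-1}$ gives an intertwining operator $\Y_T:=A_{-1}\Omega_{-1}\Y^\boxtimes$. Its type should be $\binom{W^2}{T'\;W^1}$, using $W^2\cong W^2{}''$. We must check the indices: $\Omega$ swaps the two arguments, giving type $\binom{T}{(W^2)'\;W^1}$, and then $A$ gives type $\binom{(W^1)'}{(W^2)'\;T'}$. That is not the type we want, so we will instead use the combination that does give it. Concretely, take $A_r$ of $\Y^\boxtimes$, which has type $\binom{(W^2)''}{W^1\;T'}=\binom{W^2}{W^1\;T'}$, and then apply $\Omega_s$ to get type $\binom{W^2}{T'\;W^1}$. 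Since $T'$ is restricted, Theorem \ref{interHom1}(2) gives a $V$-homomorphism $\psi:T'\to\H(W^1,W^2)$ with $\Y^\H(\psi(\cdot),x)=\Omega_sA_r\Y^\boxtimes$.

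Next we show that $\psi$ is surjective. Take an arbitrary $\Y_A(a,t)\in\H(W^1,W^2)$ with $A$ restricted. Reversing the same operations, $A_{-r-1}\Omega_{-s-1}\Y_A$ is an intertwining operator of type $\binom{A'}{W^1\;(W^2)'}$, and $A'$ is restricted. The universal property of $T$ then gives a homomorphism $\phi:T\to A'$ with $A_{-r-1}\Omega_{-s-1}\Y_A=\phi\circ\Y^\boxtimes$. Dualizing with the composition lemma $A_r([\sigma_1,\sigma_2,\sigma_3]\circ\Y)=[\sigma_1,\sigma_3^*,\sigma_2^*]\circ A_r\Y$ and its $\Omega$ analogue, we get $\Y_A(a,x)=\Omega_sA_r\Y^\boxtimes(\phi^*(a),x)$, so $\Y_A(a,t)=\psi(\phi^*(a))$. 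Hence $\H(W^1,W^2)$ is a quotient of $T'$. It is therefore lower-truncated with finite-dimensional weight spaces, that is, restricted, and this holds whatever happens with injectivity.

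For the contragredient, $\H(W^1,W^2)'$ embeds into $T''\cong T$ as a submodule. Submodules are not covered by Lemma \ref{C1-exact}, so here we need injectivity, which is the main obstacle. We will show $\ker\psi=0$ using Lemma \ref{Inj-Sur}. Since $\Y^\H$ is injective in its first argument, $\ker\psi$ is exactly the annihilator of $\Omega_sA_r\Y^\boxtimes$ in its first argument. By Lemma \ref{Inj-Sur}(2), after composing with the appropriate inverse $A,\Omega$, injectivity in the first argument is equivalent to surjectivity of $\Y^\boxtimes$. This surjectivity is a known property of tensor products, so $\psi$ is an isomorphism. The care needed is in matching the indices $r,s$ so that the operator has precisely the form $A_{r'}\Omega_{s'}\Y$ treated in Lemma \ref{Inj-Sur}. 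If that form does not match directly, we will instead adapt the proof of Lemma \ref{Inj-Sur}(1), which is a chain of invertible substitutions, to $\Omega_sA_r$. With injectivity in hand, $\H(W^1,W^2)\cong T'$ and $\H(W^1,W^2)'\cong T$, which is $C_1$-cofinite and restricted.
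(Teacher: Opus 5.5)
Your proof is correct, but it is not the route the paper takes for this theorem. It is essentially the paper's later proof of Theorem~\ref{H-Tensor} ($\Leftarrow$), combined with Theorem~\ref{C1-tensor}.

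\textbf{Your route.} You build a global comparison map $\psi\colon T'\to\H(W^1,W^2)$ from the universal property of $\H$, where $T=W^1\boxtimes(W^2)'$.
\begin{itemize}
\item Surjectivity follows from the universal property of $T$ together with the composition lemma.
\item Injectivity follows from Lemma~\ref{Inj-Sur}(2). Your worry about matching indices is unnecessary: $A_{-r-1}\Omega_{-s-1}(\Omega_sA_r\Y^\boxtimes)=\Y^\boxtimes$ puts you exactly in the lemma's form. Taking $r=s=0$ also keeps you within the range $r\in\mathbb{N}$ for which the composition lemma is stated.
\end{itemize}

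\textbf{The paper's route.} The paper never maps $T'$ into $\H$.
\begin{itemize}
\item For each restricted submodule $W\subseteq\H(W^1,W^2)$, $\Y^\H|_W$ is injective in its first argument. Hence $A_0\Omega_0\Y^\H|_W$ is surjective, and $W'$ is a quotient of $T$.
\item This gives the uniform bounds $\operatorname{wt}(W)\subseteq\operatorname{wt}(T)$ and $\dim W_{(\lambda)}\le\dim T_{(\lambda)}$. So $\H$ is restricted, being the sum of its restricted submodules.
\item Then $A_0\Omega_0\Y^\H$ is itself surjective. So $\H'$ is a quotient of $T$, and Lemma~\ref{C1-exact}(1) applies with no injectivity statement needed.
\end{itemize}

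\textbf{Comparison.} The paper's argument uses only Lemma~\ref{Inj-Sur} and the universal property of $T$. It does not need the universal property of $\H$, the composition lemma, or double-dual bookkeeping. It must, however, pass from bounds on individual restricted submodules to their sum. This implicitly uses that finite sums of restricted submodules are again restricted, so each weight space of $\H$ is a directed union of subspaces of bounded dimension.

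Your argument sidesteps that step, since $\H$ is exhibited directly as a quotient of $T'$. It also yields the stronger identification $\H(W^1,W^2)\cong\big(W^1\boxtimes(W^2)'\big)'$ (Corollary~\ref{C1H=Tensor}) in one stroke. The cost is the extra bookkeeping with $A_r$, $\Omega_s$ and the composition lemma.
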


\begin{proof}

By Theorem \ref{C1-tensor}, the tensor $W^1 \boxtimes (W^2)^\prime$ exists and is a $C_1$-cofinite restricted $V$-module.
Let $ W $ be an arbitrary restricted $ V$-submodule of $ \mathcal{H}(W^1,W^2)$. 
Then the restriction intertwining operator
\[
\Y^\H|_{W}(-,x)- \in \mathcal{I} \binom{W^2}{W\ W^1}
\]
is injective in its first argument. 
By Lemma \ref{Inj-Sur}(2), the intertwining operator  
\[
A_0 \Omega_0 \Y^\H|_{W} \in  \mathcal{I} \binom{ W^\prime }{W^1 \; (W^2)^{\prime}}
\]
is surjective. 
This implies that $W'$ is a quotient module of $W^1 \boxtimes (W^2)'$.
It follows that
\begin{align}
    \operatorname{wt}(W) = \operatorname{wt}(W') \subset \operatorname{wt}\big(W^1 \boxtimes (W^2)'\big),
\end{align}
and
\begin{align}
\dim W_{(\lambda)} = \dim W'_{(\lambda)} \leq \dim \big(W^1 \boxtimes (W^2)'\big)_{(\lambda)}
\end{align}
for all $\lambda\in\mathbb{C}$.

Since $ \mathcal{H}(W^1,W^2) $ is the sum of its restricted  $ V$-submodules, 
and the above two inequalies hold for every restricted $V$-submodule $W$ of $\H(W^1, W^2)$, we further obtain
\[
 \operatorname{wt}(\H(W^1, W^2)) \subset \operatorname{wt}\big(W^1 \boxtimes (W^2)'\big),
\]
and 
\[
\dim \H(W^1, W^2)_{(\lambda)}   \leq \dim \big(W^1 \boxtimes (W^2)'\big)_{(\lambda)}
\]
for all $\lambda \in \C$.
Consequently, \(\mathcal{H}(W^1,W^2)\) is a restricted \(V\)-module.

Now the operator $A_0 \Omega_0 \Y^\H $ is a well-defined surjective intertwining operator of type
\[
\mathcal{I} \binom{ \H(W^1, W^2)^\prime }{W^1 \; (W^2)^{\prime}}.
\]
Therefore, $\mathcal{H}(W^1,W^2)'$ is a quotient of $W^1 \boxtimes (W^2)'$, 
and hence $\mathcal{H}(W^1,W^2)'$ is a $C_1$-cofinite restricted $V$-module.

\end{proof}

\begin{corollary} \label{finite-fusion1}

Suppose  $W^1$ and $ (W^2)^\prime $ are $C_1$-cofinite restricted $V$-modules,  
and let $W$ be an arbitrary restricted $V$-module. 
Then the intertwining operator space $ \mathcal{I}\binom{W^2}{W \; W^1} $ is finite-dimensional.
\end{corollary}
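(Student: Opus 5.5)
By Theorem \ref{interHom1}(2), the map $\varphi \mapsto \mathcal{Y}^{\mathcal{H}}(\varphi(-),x)$ identifies $\mathcal{I}\binom{W^2}{W\; W^1}$ with $\operatorname{Hom}_V\big(W, \mathcal{H}(W^1,W^2)\big)$. The question is therefore whether this Hom space is finite-dimensional. I will dualize it and reduce to the finiteness of the module $\mathcal{H}(W^1,W^2)'$, which Theorem \ref{f-dim} provides.

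First, Theorem \ref{f-dim} shows that $\mathcal{H}(W^1,W^2)$ is a restricted $V$-module. Taking contragredients is then faithful on homomorphisms between restricted modules, since $M''\cong M$. Hence $\varphi\mapsto\varphi^*$ gives a linear injection (in fact a bijection)
\[
\operatorname{Hom}_V\big(W, \mathcal{H}(W^1,W^2)\big) \hookrightarrow \operatorname{Hom}_V\big(\mathcal{H}(W^1,W^2)', W'\big).
\]
Second, Theorem \ref{f-dim} also says that $\mathcal{H}(W^1,W^2)'$ is $C_1$-cofinite. By Lemma \ref{C1-lemma} it is therefore finitely generated, say by homogeneous elements spanning a finite-dimensional homogeneous subspace $E$ (for instance a homogeneous complement of its $C_1$-subspace). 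A $V$-homomorphism out of $\mathcal{H}(W^1,W^2)'$ is determined by its values on $E$. It also preserves generalized weights, so it maps $E\cap \mathcal{H}(W^1,W^2)'_{(\lambda)}$ into $W'_{(\lambda)}$.

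The target spaces $W'_{(\lambda)}\cong (W_{(\lambda)})^*$ are finite-dimensional because $W$ is restricted, and only the finitely many weights $\lambda\in\operatorname{wt}(E)$ occur. Hence
\[
\dim \operatorname{Hom}_V\big(\mathcal{H}(W^1,W^2)', W'\big) \le \sum_{\lambda\in \operatorname{wt}(E)} \dim\big(E\cap \mathcal{H}(W^1,W^2)'_{(\lambda)}\big)\cdot \dim W_{(\lambda)} < \infty .
\]
This gives the finite-dimensionality of $\mathcal{I}\binom{W^2}{W\; W^1}$.

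An alternative route avoids dualizing. By the universal property of the tensor product, $\operatorname{Hom}_V(W^1\boxtimes (W^2)', W')$ is isomorphic to $\mathcal{I}\binom{W'}{W^1\;(W^2)'}$. Using $A_r$ and $\Omega_s$, this space is in turn isomorphic to $\mathcal{I}\binom{W^2}{W\;W^1}$. Since $W^1\boxtimes(W^2)'$ is $C_1$-cofinite by Theorem \ref{C1-tensor}, the same counting argument applies.

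I expect no serious obstacle. The only step needing care is checking that homomorphisms preserve generalized weight spaces and are determined by a generating set. Both are immediate because $L(0)$ commutes with $V$-homomorphisms.
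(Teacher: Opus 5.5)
Your proof is correct and follows essentially the same route as the paper. You identify $\mathcal{I}\binom{W^2}{W\;W^1}$ with $\operatorname{Hom}_V(W,\mathcal{H}(W^1,W^2))$ and dualize to $\operatorname{Hom}_V(\mathcal{H}(W^1,W^2)',W')$. You then bound this space by restriction to a finite-dimensional homogeneous generating subspace of the $C_1$-cofinite module $\mathcal{H}(W^1,W^2)'$, which lands in finitely many finite-dimensional weight spaces of $W'$.
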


\begin{proof}

By the universal property of $\H(W^1, W^2)$, we have a vector space isomorphism 
\[
\operatorname{Hom}_V\bigl(W,  \mathcal{H}(W^1,  W^2)\bigr) \cong \mathcal{I}\binom{W^2}{W\; W^1}.
\]
Thus,  $ \mathcal{I}\binom{W^2}{W\; W^1}$   is finite-dimensional if and only if $\operatorname{Hom}_V\bigl(W,  \mathcal{H}(W^1,  W^2)\bigr)$ is so. Since $W$ and $\H(W^1, W^2)$ are restricted $V$-modules, there is a canonical isomorphism 
\[
\operatorname{Hom}_V \big( W,  \H(W^1, W^2) \big)  \cong \operatorname{Hom}_V \big( \H(W^1, W^2)^\prime,  W^\prime \big).
\]

Since $\mathcal{H}(W^1, W^2)'$ is a $C_1$-cofinite restricted $V$-module (see Theorem \ref{f-dim}), 
it is finitely generated. Suppose that $\mathcal{H}(W^1, W^2)'$ is generated by
\[
X = \mathcal{H}(W^1, W^2)'_{(\lambda_1)} \oplus \cdots \oplus \mathcal{H}(W^1, W^2)'_{(\lambda_n)},
\]
for some $\lambda_i \in \C, 1\leq i\leq n$.
Let $ Y= W^{\prime}_{ (\lambda_1) } \oplus \cdots \oplus W^{\prime}_{ (\lambda_n) } $.
Then for any 
\[
f \in  \operatorname{Hom}_V\big( \H(W^1, W^2)^\prime, W^\prime \big),
\]
we have $f(X) \subset Y.$
Because  $ \H(W^1, W^2)^\prime $ is generated by $X$,
the restriction map
\[
\varphi: \operatorname{Hom}_V\bigl(\mathcal{H}(W^1,W^2)', W'\bigr) \longrightarrow \operatorname{Hom}_{\mathbb{C}}(X,Y),\qquad 
f \longmapsto f|_X,
\]
is injective.  
Moreover, since $\operatorname{Hom}_{\mathbb{C}}(X,Y)  $ is finite-dimensional, we deduce that
\[
\operatorname{Hom}_V \big( \H(W^1, W^2)^\prime,  W^\prime \big) < \infty.  
\]
Consequently, $\operatorname{Hom}_V \big( W, \H(W^1, W^2) \big)$ is finite-dimensional, so the intertwining operator space  $\mathcal{I}\binom{W^2}{W\;W^1}$ is also finite-dimensional. This completes the proof.

\end{proof}

\begin{corollary}

Suppose  $W^1$ and $ W^2 $ are $C_1$-cofinite restricted $V$-modules,  
and let $W$ be an arbitrary restricted $V$-module. 
Then the intertwining operator space $\mathcal{I}\binom{W}{W^1 \; W^2}$ is finite-dimensional.

\end{corollary}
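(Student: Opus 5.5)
The plan is to reduce the statement to Corollary \ref{finite-fusion1} by converting an intertwining operator of type $\binom{W}{W^1\;W^2}$ into one whose target is $W^1$ or $W^2$, using the operators $\Omega_r$ and $A_r$.

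\textbf{Step 1: the conversion.} Let $\mathcal{Y}\in\mathcal{I}\binom{W}{W^1\;W^2}$ with $W$ restricted. Then $\Omega_0\mathcal{Y}\in\mathcal{I}\binom{W}{W^2\;W^1}$. Since $W^2$, $W^1$ and $W$ are all restricted, $A_0$ applies and gives
\[
A_0\Omega_0\mathcal{Y}\in\mathcal{I}\binom{(W^1)'}{W^2\;W'}.
\]
The map $\mathcal{Y}\mapsto A_0\Omega_0\mathcal{Y}$ is linear. It is injective, with inverse $\Omega_{-1}A_{-1}$, by the identities $A_{-1}A_0=\mathrm{id}$ and $\Omega_{-1}\Omega_0=\mathrm{id}$ recalled in Section \ref{sec3}. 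Therefore
\[
\dim\mathcal{I}\binom{W}{W^1\;W^2}\le\dim\mathcal{I}\binom{(W^1)'}{W^2\;W'}.
\]

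\textbf{Step 2: apply Corollary \ref{finite-fusion1}.} Use it with the pair $(W^2,(W^1)')$ in place of $(W^1,W^2)$ and with $W'$ as the arbitrary restricted module. Its hypotheses ask that $W^2$ and $\big((W^1)'\big)'$ be $C_1$-cofinite restricted modules. Under the identification $W^1\cong (W^1)''$ fixed in the Convention, this is exactly our assumption on $W^1$ and $W^2$. Hence $\mathcal{I}\binom{(W^1)'}{W'\;W^2}$ is finite-dimensional.

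\textbf{Step 3: match the argument order.} Corollary \ref{finite-fusion1} controls $\mathcal{I}\binom{(W^1)'}{W'\;W^2}$, while Step 1 produced $\mathcal{I}\binom{(W^1)'}{W^2\;W'}$. One more application of $\Omega_0$ is a linear isomorphism
\[
\mathcal{I}\binom{(W^1)'}{W^2\;W'}\cong\mathcal{I}\binom{(W^1)'}{W'\;W^2},
\]
with inverse $\Omega_{-1}$. Combining the three steps gives $\dim\mathcal{I}\binom{W}{W^1\;W^2}<\infty$. Equivalently, the whole chain can be written as $\mathcal{Y}\mapsto\Omega_0A_0\Omega_0\mathcal{Y}$.

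\textbf{Main obstacle.} The only delicate point is checking that each application of $A_r$ meets the restrictedness hypotheses stated in Section \ref{sec3}, and that the resulting operators have finite-dimensional coefficient spaces as required. Here every module involved ($W^1$, $W^2$, $W$ and their contragredients) is restricted, so this bookkeeping goes through directly.
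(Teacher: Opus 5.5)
Your proof is correct and follows essentially the same route as the paper: use the $\Omega_r$/$A_r$ symmetries to move the fusion rule into a space controlled by Corollary \ref{finite-fusion1}. The only difference is bookkeeping: the paper uses $\mathcal{I}\binom{W}{W^1\;W^2}\cong\mathcal{I}\binom{(W^2)'}{W'\;W^1}$ and applies the corollary to the pair $(W^1,(W^2)')$, whereas you dualize $W^1$ instead, which costs one extra application of $\Omega_0$.
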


\begin{proof}

 Since 
\[
\mathcal{I} \binom{W}{W^1 \; W^2} \cong \mathcal{I} \binom{ (W^2)^\prime }{W^\prime \; W^1},
\]
and $ (W^2)^ { \prime \prime} \cong W^2$ is $C_1$-cofinite,  the result follows immediately from Corollary \ref{finite-fusion1}.

\end{proof}

\begin{remark}

Let $W^1$, $W^2$ be $C_1$-cofinite restricted $V$-modules, and let $W^3$ be a restricted $V$-module.
The finiteness of the fusion rule $N(W^1,W^2;W^3)$ was established in \cite{H05a} under the assumption that $(W^3)'$ is $C_1$-cofinite,
and in \cite{YZ26} under the condition that $W^3$ is of finite length.

Furthermore, this result extends the finiteness results for fusion rules in the $C_2$-cofinite setting obtained in \cite{Li99b, ABD04}. 
Indeed, by Lemma \ref{lem:C2-C1}, if $V$ is a $C_2$-cofinite vertex operator algebra without negative conformal weights, then a weak $V$-module is ordinary if and only if it is $C_1$-cofinite.
The finiteness results for fusion rules in \cite{Li99b, ABD04} were derived via the $A(V)$-bimodule theory from \cite{FZ92, Li99a}.

\end{remark}

\begin{corollary}
Let $V$ be a $C_2$-cofinite vertex operator algebra without negative conformal weights.
Let $W^1$ and $W^2$ be restricted $V$-modules. Then
\[
\mathcal{H}(W^1, W^2) = \mathcal{G}(W^1, W^2),
\]
and this common module is a restricted $V$-module.
\end{corollary}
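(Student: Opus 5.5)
Let $V$ be $C_2$-cofinite without negative conformal weights, and $W^1,W^2$ restricted $V$-modules. The plan is to show that $\G(W^1,W^2)$ is itself a restricted $V$-module. Then $\G(W^1,W^2)$ equals the sum of its restricted submodules, which is $\Delta(W^1,W^2)$, and Theorem \ref{delta=h} gives $\H(W^1,W^2)=\Delta(W^1,W^2)=\G(W^1,W^2)$.

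First, by Lemma \ref{lem:C2-C1}, every restricted $V$-module is $C_1$-cofinite. Hence $W^1$ and $(W^2)'$ are $C_1$-cofinite, since $(W^2)'$ is restricted. Theorem \ref{f-dim} then shows that $\H(W^1,W^2)$ is a restricted $V$-module, which settles the last assertion once the equality is proved.

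Next, take an arbitrary $\Phi(t)\in\G(W^1,W^2)$ and let $M\subseteq \G(W^1,W^2)$ be the submodule it generates. By Theorem \ref{Th-G}(1), $\G(W^1,W^2)$ is a weak $V$-module, so $M$ is a cyclic weak $V$-module. By the first part of Lemma \ref{lem:C2-C1}, $M$ is a lower-truncated generalized $V$-module. Since it is finitely generated, the equivalence $(4)\Rightarrow(1)$ in Lemma \ref{lem:C2-C1} shows $M$ is restricted. Therefore $\Phi(t)\in M\subseteq \Delta(W^1,W^2)$, so $\G(W^1,W^2)=\Delta(W^1,W^2)$. Combined with Theorem \ref{delta=h}, this gives $\H(W^1,W^2)=\G(W^1,W^2)$.

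The main obstacle is applying Lemma \ref{lem:C2-C1} to the weak module $M$, which is given only abstractly inside $\G(W^1,W^2)$. The point to check is that the quoted facts from \cite{ABD04, M04} apply to arbitrary weak modules: a weak module over such a $C_2$-cofinite VOA decomposes into generalized $L(0)$-eigenspaces and is lower truncated. The proof of $(4)\Rightarrow(1)$ also assumes a homogeneous generator. If it does not directly apply, I would decompose $\Phi(t)$ into finitely many generalized $L(0)$-eigencomponents, which is possible once $M$ is known to be a generalized module, and generate $M$ by these finitely many homogeneous elements. Should the automatic-grading statement be unavailable for $\G$, the fallback is to work in $\Delta(W^1,W^2)$ and use Theorem \ref{Th-G}(2) with the submodule generated by $\Phi(t)$. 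This route still requires that submodule to be a generalized module, so the automatic grading of weak modules is the essential input either way.
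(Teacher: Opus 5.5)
Your proposal is correct and follows essentially the paper's route: Lemma \ref{lem:C2-C1} makes every cyclic submodule of $\G(W^1,W^2)$ restricted, so $\G=\Delta=\H$ by Theorem \ref{delta=h}, and Theorem \ref{f-dim} (with $W^1,(W^2)'$ $C_1$-cofinite by the same lemma) shows the common module is restricted. Your opening ``plan'' sentence has the order reversed---restrictedness of $\G(W^1,W^2)$ is a consequence of the equality together with Theorem \ref{f-dim}, not an input to it---but the argument you actually carry out never relies on that sentence.
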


\begin{proof}
Since every finitely generated weak module is a restricted $V$-module (Lemma \ref{lem:C2-C1}), every weak module is the sum of its restricted submodules. Hence $\mathcal{H}(W^1, W^2)$ and $\mathcal{G}(W^1, W^2)$ coincide. Since every restricted $V$-module is $C_1$-cofinite, Theorem \ref{f-dim} implies that $\mathcal{H}(W^1, W^2)$ is a restricted $V$-module.
\end{proof}

\subsection{ Relations Between $\H(W^1, W^2)$ and Tensor Product }

\begin{theorem} \label{H-Tensor}
Let \(W^1\) and \((W^2)'\) be restricted \(V\)-modules.
Then $\mathcal{H} (W^1, W^2 )$ is a restricted \(V\)-module if and only if
the tensor product $W^1 \boxtimes (W^2)^\prime$ exists 
in the category of restricted \(V\)-modules.
Furthermore, under this equivalence, we have
\[
 \mathcal{H}(W^1,W^2)' \cong  W^1 \boxtimes (W^2)^\prime.
\]
\end{theorem}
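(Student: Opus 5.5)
Both directions will be proved by matching universal properties through the invertible operations $\Omega_r$ and $A_r$, using Theorem \ref{interHom1}(2) and Lemma \ref{Inj-Sur}.

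\emph{($\Rightarrow$)} Assume $\mathcal{H}=\mathcal{H}(W^1,W^2)$ is restricted. Then $\mathcal{H}'$ is restricted, and we set
\[
\mathcal{Y}^* = A_0\Omega_0\mathcal{Y}^{\mathcal{H}} \in \mathcal{I}\binom{\mathcal{H}'}{W^1\;(W^2)'}.
\]
We will show that $(\mathcal{H}',\mathcal{Y}^*)$ is a tensor product of $W^1$ and $(W^2)'$. Fix a restricted module $M$ and $\mathcal{Y}\in\mathcal{I}\binom{M}{W^1\;(W^2)'}$.
\begin{enumerate}[(i)]
\item Since $A_r$ and $\Omega_s$ invert one another (with the index pairs $r\leftrightarrow -r-1$), we can choose indices with $\mathcal{Y}=A_0\Omega_0\widetilde{\mathcal{Y}}$ for a unique $\widetilde{\mathcal{Y}}\in\mathcal{I}\binom{W^2}{M'\;W^1}$. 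Concretely, $\widetilde{\mathcal{Y}}=\Omega_{-1}A_{-1}\mathcal{Y}$, using $(W^2)''=W^2$ and $M''=M$.
\item Theorem \ref{interHom1}(2) gives a unique $\varphi:M'\to\mathcal{H}$ with $\widetilde{\mathcal{Y}}=\mathcal{Y}^{\mathcal{H}}(\varphi(-),x)$.
\item The compatibility lemma for $[\sigma_1,\sigma_2,\sigma_3]\circ\mathcal{Y}$ with $\Omega_r$ and $A_r$ yields
\[
\mathcal{Y}=A_0\Omega_0\big([\varphi,\mathrm{id},\mathrm{id}]\circ\mathcal{Y}^{\mathcal{H}}\big)=[\mathrm{id},\mathrm{id},\varphi^*]\circ\mathcal{Y}^*,
\]
that is, $\mathcal{Y}=\varphi'\circ\mathcal{Y}^*$ with $\varphi'=\varphi^*:\mathcal{H}'\to M''=M$.
\item For uniqueness, if $\psi\circ\mathcal{Y}^*=\mathcal{Y}$, then running the same identities backwards gives $\mathcal{Y}^{\mathcal{H}}(\psi^*(-),x)=\widetilde{\mathcal{Y}}$. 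The uniqueness in Theorem \ref{interHom1}(2) then forces $\psi^*=\varphi$, hence $\psi=\varphi^*$ because both modules are restricted.
\end{enumerate}
Thus $\mathcal{H}'\cong W^1\boxtimes(W^2)'$. Surjectivity of $\mathcal{Y}^*$ is also automatic from Lemma \ref{Inj-Sur}(2), since $\mathcal{Y}^{\mathcal{H}}$ is injective in its first argument.

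\emph{($\Leftarrow$)} Assume $T=W^1\boxtimes(W^2)'$ exists. First bound $\mathcal{H}$ exactly as in the proof of Theorem \ref{f-dim}, with $T$ in place of the $C_1$-cofinite tensor product; that argument only used the existence of $T$ and its restrictedness. For every restricted submodule $W\subseteq\mathcal{H}$, the operator $A_0\Omega_0\mathcal{Y}^{\mathcal{H}}|_W$ is surjective by Lemma \ref{Inj-Sur}(2), so $W'$ is a quotient of $T$. This gives $\operatorname{wt}(W)\subset\operatorname{wt}(T)$ and $\dim W_{(\lambda)}\le\dim T_{(\lambda)}$. Since $\mathcal{H}$ is the sum of its restricted submodules (Theorem \ref{interHom1}(3)), $\mathcal{H}$ is restricted.

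\emph{Main obstacle.} The one delicate step is passing these bounds from the submodules $W$ to $\mathcal{H}$ itself, because a sum of restricted submodules need not be restricted. The argument works as follows.
\begin{itemize}
\item Lower truncation holds because all weights lie in $\operatorname{wt}(T)$, and $T$ is lower-truncated.
\item For a fixed $\lambda$, any finite-dimensional subspace of $\mathcal{H}_{(\lambda)}$ lies in a finite sum of restricted submodules, which is again a restricted submodule $W$. Hence every finite-dimensional subspace of $\mathcal{H}_{(\lambda)}$ has dimension at most $\dim T_{(\lambda)}$, and therefore $\dim\mathcal{H}_{(\lambda)}\le\dim T_{(\lambda)}$.
\end{itemize}
With $\mathcal{H}$ restricted, the ($\Rightarrow$) argument applies and supplies the isomorphism $\mathcal{H}'\cong T$ by uniqueness of tensor products. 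The remaining care is keeping the branch indices $r,s$ consistent when inverting $A_0\Omega_0$ in step (i).
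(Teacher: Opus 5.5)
Your proof is correct, but the ($\Leftarrow$) direction follows a different route from the paper's. The ($\Rightarrow$) direction is essentially the paper's argument. The only difference there is that the paper gets uniqueness from surjectivity of $A_0\Omega_0\mathcal{Y}^{\mathcal{H}}$, whereas you dualize back to the uniqueness in Theorem \ref{interHom1}(2); you also note the surjectivity.

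\textbf{The paper's ($\Leftarrow$).} The paper never bounds weight spaces. It sets $T=W^1\boxtimes(W^2)'$ and forms $\Omega_0A_0\mathcal{Y}^{\boxtimes}$, of type $\binom{W^2}{T'\;W^1}$. It then checks directly that $(T',\Omega_0A_0\mathcal{Y}^{\boxtimes})$ satisfies the universal property in Theorem \ref{Universal-H}:
\begin{itemize}
\item existence of the factorization comes from the universal property of $T$ applied to $A_{-1}\Omega_{-1}\mathcal{I}$;
\item uniqueness comes from $\Omega_0A_0\mathcal{Y}^{\boxtimes}$ being injective in its first argument, which follows from surjectivity of $\mathcal{Y}^{\boxtimes}$.
\end{itemize}
This gives $\mathcal{H}(W^1,W^2)\cong T'$ in one step, with restrictedness as a byproduct, and the two directions are mirror images of each other.

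\textbf{Your ($\Leftarrow$).} You first prove restrictedness via the surjections $T\twoheadrightarrow W'$ for each restricted submodule $W\subseteq\mathcal{H}(W^1,W^2)$. This reruns the proof of Theorem \ref{f-dim}, which indeed uses only that $T$ exists and is restricted. You then obtain the isomorphism from ($\Rightarrow$) together with uniqueness of tensor products.

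\textbf{What each route buys.} Your route gives the explicit bounds $\operatorname{wt}(\mathcal{H}(W^1,W^2))\subseteq\operatorname{wt}(T)$ and $\dim\mathcal{H}(W^1,W^2)_{(\lambda)}\le\dim T_{(\lambda)}$. It also shows that the first half of Theorem \ref{f-dim} needs only the existence of the tensor product, not $C_1$-cofiniteness. Your finite-sum argument (any finite-dimensional subspace of $\mathcal{H}(W^1,W^2)_{(\lambda)}$ lies in one restricted submodule) makes rigorous a step the paper's proof of Theorem \ref{f-dim} passes over in a sentence. The paper's route is shorter and purely formal, matching universal property against universal property with no weight-space bookkeeping.
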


\begin{proof}
($  \Rightarrow $) \;
Assume that $\mathcal{H} (W^1, W^2 )$ is a restricted \(V\)-module.
Then  $A_{0} \Omega_{0} \mathcal{Y}^{\mathcal{H}} $ 
is a well-defined intertwining operator of the type 
\[
\binom{( \H(W^1, W^2)^\prime}{W^1 \; (W^2)^{\prime}}.
\]

We prove that the pair 
\[
\big(\H(W^1, W^2)^\prime, \ A_{0} \Omega_{0} \mathcal{Y}^\H \big)
\]
satisfies the universal property defining the tensor product of $W^1$ and $(W^2)^{\prime}$.
It then follows that the tensor product $W^1 \boxtimes (W^2)^\prime$ exists and is isomorphic to $ \mathcal{H}(W^1,W^2)'$.

To verify this claim, let $W$ be an arbitrary restricted $V$-module, 
and let $\mathcal{I}$ be an intertwining operator
of type $\binom{W}{W^1 \ (W^2)^\prime}$. 
Then $ \Omega_{-1} A_{-1} \mathcal{I}$ is an intertwining operator of type $\binom{W^2}{W^\prime \ W^1}$. 
Consequently, there exists a $V$-homomorphism 
\[
\varphi: W^\prime \to \H(W^1, W^2) 
\]
such that 
\[
\Y^\H\big(\varphi (w^{\prime}), x  \big)= \Omega_{-1} A_{-1} \mathcal{I}(w^{\prime}, x )
\]
for all $w^\prime \in W^\prime$.
It follows that
\begin{align*}
 \mathcal{I} & =A_0 \Omega_0 \big([\varphi, id; id] \circ \Y^\H \big) =A_0  \Big([id, \varphi; id] \circ \big( \Omega_0\Y^\H \big) \Big)\\
  &= \Big([id, id; \varphi^*] \circ \big(A_0\Omega_0\Y^\H \big) \Big)= \varphi^* A_0\Omega_0\Y^\H.
\end{align*}
Moreover, since $A_0\Omega_0 \Y^\H$ is a surjective intertwining operator, 
the morphism $\varphi^*$ satisfying 
\[
\varphi^* A_0\Omega_0\Y^\H= \mathcal{I}
\]
is unique. 
Thus, the required universal property holds.

\vspace{0.5\baselineskip}

($ \Leftarrow $) \;  Assume that the tensor product $W^1 \boxtimes (W^2)^\prime$ exists 
in the category of restricted \(V\)-modules.
Then  $ \Omega_{0} A_{0}  \Y^{\boxtimes} $ 
is a well-defined intertwining operator of the type 
\[
\binom{W^2}{ \big(W^1 \boxtimes (W^2)^\prime \big)^\prime \; W^1}.
\]
We show that the pair 
\[
\big( \big(W^1 \boxtimes (W^2)^\prime \big)^\prime, \  \Omega_{0} A_{0}  \Y^{\boxtimes} \big)
\]
satisfies the universal property of $\H(W^1, W^2)$  given in Theorem \ref{Universal-H}.
Consequently, 
\[
\H(W^1, W^2) \cong \big(W^1 \boxtimes (W^2)^\prime \big)^\prime
\]
is a restricted $V$-module.

Let $W$ be any  restricted $V$-module, 
and let $\mathcal{I}$ be an intertwining operator
of type $\binom{W^2}{W \ W^1}$. 
Then $A_{-1} \Omega_{-1}  \mathcal{I}$ is an intertwining operator of type $\binom{W^\prime}{W^1 \ (W^2)^\prime}$. 
Consequently, there exists a $V$-homomorphism 
\[
\varphi: W^1 \boxtimes (W^2)^\prime  \to W^\prime  
\]
such that 
\[
\varphi  \Y^\boxtimes = A_{-1} \Omega_{-1}  \mathcal{I}.
\]
It follows that
\begin{align*}
 \mathcal{I} & = \Omega_0 A_0 \big([id, id; \varphi] \circ \Y^\boxtimes \big) =
 \Omega_0 \Big([id, \varphi^*;  id ] \circ \big( A_0 \Y^\boxtimes \big) \Big)\\
  &= [\varphi^*, id; id] \circ \big(\Omega_0 A_0 \Y^\boxtimes \big) .
\end{align*}
Thus, for all $w \in W$,
\[
\mathcal{I}(w, x)=  \Omega_0 A_0 \Y^\boxtimes \big( \varphi^*(w), x \big).
\]
Moreover, since $  \Omega_0 A_0 \Y^\boxtimes $ is injective in its first argument, 
the morphism $\varphi^*$  satisfying the above identity is unique. This verifies the universal property of  $\H(W^1, W^2)$.

\end{proof}

\begin{corollary} \label{C1H=Tensor}
Let $W^1$ and $ (W^2)^\prime $ be $C_1$-cofinite restricted $V$-modules.
We have a restricted $V$-module isomorphism 
\[
\H (W^1, W^2 )^\prime \cong W^1 \boxtimes (W^2)^\prime.
\]
\end{corollary}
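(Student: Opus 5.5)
This corollary is a direct combination of Theorem \ref{f-dim} with Theorem \ref{H-Tensor}. We never need to compare $\mathcal{H}(W^1,W^2)'$ and $W^1\boxtimes (W^2)'$ directly. It suffices to place ourselves in the situation covered by the equivalence of Theorem \ref{H-Tensor} and then read off the isomorphism it supplies.

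First, the hypotheses say that $W^1$ and $(W^2)'$ are $C_1$-cofinite restricted $V$-modules. In particular they are restricted, so Theorem \ref{H-Tensor} applies to this pair. Moreover, $W^2 \cong (W^2)''$ is restricted, so $\mathcal{H}(W^1,W^2)$ is defined as in Section \ref{sec4}.

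Second, Theorem \ref{f-dim} shows that $\mathcal{H}(W^1,W^2)$ is a restricted $V$-module. This is the hypothesis of the direction ($\Rightarrow$) of Theorem \ref{H-Tensor}. That direction shows that $\big(\mathcal{H}(W^1,W^2)', A_0\Omega_0\mathcal{Y}^{\mathcal{H}}\big)$ satisfies the universal property of the tensor product of $W^1$ and $(W^2)'$ in the category of restricted $V$-modules. Independently, Theorem \ref{C1-tensor} guarantees that $W^1\boxtimes (W^2)'$ exists. Uniqueness of universal objects then gives
\[
\mathcal{H}(W^1,W^2)' \cong W^1\boxtimes (W^2)'
\]
as restricted $V$-modules, with $\mathcal{Y}^{\boxtimes}$ corresponding to $A_0\Omega_0\mathcal{Y}^{\mathcal{H}}$.

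There is no real obstacle, since all the work is in Theorems \ref{f-dim} and \ref{H-Tensor}. The one point to check is that restrictedness of $\mathcal{H}(W^1,W^2)$ is genuinely available before invoking Theorem \ref{H-Tensor}, because $A_0\Omega_0\mathcal{Y}^{\mathcal{H}}$ only makes sense once the contragredient $\mathcal{H}(W^1,W^2)'$ is a restricted module. Theorem \ref{f-dim} supplies exactly this, since its proof bounds weights and weight-space dimensions by those of $W^1\boxtimes(W^2)'$.
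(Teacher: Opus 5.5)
Your proposal is correct and matches the paper's proof, which simply combines Theorem \ref{f-dim} (restrictedness of $\mathcal{H}(W^1,W^2)$) with the ($\Rightarrow$) direction of Theorem \ref{H-Tensor}. Your separate appeal to Theorem \ref{C1-tensor} is redundant, since that direction already shows $\mathcal{H}(W^1,W^2)'$ is a tensor product of $W^1$ and $(W^2)'$, but it does no harm.
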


\begin{proof}
It follows from Theorem \ref{f-dim} and Theorem \ref{H-Tensor}.
\end{proof}

\subsection{ Realization of the Internal Hom in Monoidal Categories } \label{internal-hom}

Let $(\mathcal{C}, \otimes)$ be a monoidal category, and $W^1, W^2 \in \mathcal{C}$.
If there exists an object $\underline{\operatorname{Hom}}(W^1, W^2)$ in $  \mathcal{C}$, and a natural isomorphism
\[
\operatorname{Hom}_{\mathcal{C}} \big(X, \; \underline{\operatorname{Hom}}(W^1, W^2) \big) \cong \operatorname{Hom}_{\mathcal{C}}( X \otimes W^1, \; W^2), 
\]
for all $X \in \mathcal{C}$, then $\underline{\operatorname{Hom}}(W^1, W^2)$  is called the {\bf internal Hom } from $W^1$ to $W^2$ \cite{EGNO15}. 

Let $V$ be a vertex operator algebra such that a restricted $V$-module is $C_1\mbox{-cofinite}$ if and only if it has finite composition length. 
By Lemma \ref{lem:C2-C1}, this condition holds whenever $V$ is a $C_2\mbox{-cofinite}$ vertex operator algebra with no negative conformal weights.
Denote by $\mathcal{C}_V$ the category of restricted $V$-modules of finite composition length.
Then $\mathcal{C}_V$ is a $\C$-linear abelian category, and it is closed under tensor products and contragredients.
We assume that $( \mathcal{C}_V, \boxtimes )$  constitutes a monoidal category.
In fact, under the hypotheses on $V$,  $( \mathcal{C}_V, \boxtimes )$  is a tensor category, as established in \cite{McR24,CMHFY26}. 
In particular, if $V$ is a regular vertex operator algebra, then $\mathcal{C}_V$ is even a modular tensor category  under certain
additional conditions \cite{H08}.

\begin{theorem} \label{inter-hom-th}
Let $W^1, W^2 \in \mathcal{C}_V$. Then  $\mathcal{H}(W^1,W^2)$ is the internal Hom from $W^1$ to $W^2$ in $\mathcal{C}_V$.
\end{theorem}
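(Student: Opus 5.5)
We first show that $\mathcal{H}(W^1,W^2)$ lies in $\mathcal{C}_V$, and then compose two natural isomorphisms: the universal property of $\mathcal{H}$ from Theorem \ref{interHom1}(2) and the universal property of $\boxtimes$.

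\textbf{Step 1: $\mathcal{H}(W^1,W^2)\in\mathcal{C}_V$.} Since $\mathcal{C}_V$ is closed under contragredients, $(W^2)'\in\mathcal{C}_V$. Hence $W^1$ and $(W^2)'$ are $C_1$-cofinite by the standing hypothesis on $V$. By Theorem \ref{f-dim}, $\mathcal{H}(W^1,W^2)$ is restricted and $\mathcal{H}(W^1,W^2)'$ is $C_1$-cofinite, so $\mathcal{H}(W^1,W^2)'$ has finite length. Taking contragredients is an exact contravariant involution on restricted modules, so $\mathcal{H}(W^1,W^2)\cong\mathcal{H}(W^1,W^2)''$ also has finite length. (Alternatively, use Corollary \ref{C1H=Tensor}: $\mathcal{H}(W^1,W^2)\cong (W^1\boxtimes (W^2)')'$, which lies in $\mathcal{C}_V$ by closure.)

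\textbf{Step 2: the natural bijection.} Let $X\in\mathcal{C}_V$. Then $X$ is restricted, and so is in particular a sum of its restricted submodules. Theorem \ref{interHom1}(2) therefore gives a linear isomorphism
\[
\operatorname{Hom}_V\big(X,\mathcal{H}(W^1,W^2)\big)\cong \mathcal{I}\binom{W^2}{X\;W^1},\qquad \varphi\mapsto \mathcal{Y}^{\mathcal{H}}(\varphi(-),x).
\]
The tensor product $X\boxtimes W^1$ exists in $\mathcal{C}_V$ (Theorem \ref{C1-tensor}, together with the finite-length/$C_1$ equivalence). Its universal property gives
\[
\operatorname{Hom}_V(X\boxtimes W^1,W^2)\cong \mathcal{I}\binom{W^2}{X\;W^1},\qquad \psi\mapsto \psi\circ\mathcal{Y}^{\boxtimes}.
\]
Composing the first isomorphism with the inverse of the second yields the desired bijection $\Phi_X$. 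Explicitly, $\Phi_X(\varphi)$ is the unique morphism $\psi$ with $\psi\circ\mathcal{Y}^{\boxtimes}_{X,W^1}=\mathcal{Y}^{\mathcal{H}}(\varphi(-),x)$.

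\textbf{Step 3: naturality in $X$.} Let $g:X_1\to X_2$ be a morphism in $\mathcal{C}_V$, and recall that $g\boxtimes \mathrm{id}$ is characterized by $(g\boxtimes\mathrm{id})\circ\mathcal{Y}^{\boxtimes}_{X_1,W^1}=\mathcal{Y}^{\boxtimes}_{X_2,W^1}(g(-),x)$. For $\varphi:X_2\to\mathcal{H}$ we compute
\[
\Phi_{X_2}(\varphi)\circ(g\boxtimes\mathrm{id})\circ\mathcal{Y}^{\boxtimes}_{X_1,W^1}=\Phi_{X_2}(\varphi)\circ\mathcal{Y}^{\boxtimes}_{X_2,W^1}(g(-),x)=\mathcal{Y}^{\mathcal{H}}(\varphi g(-),x).
\]
The right-hand side is also the defining property of $\Phi_{X_1}(\varphi\circ g)$. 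By uniqueness in the universal property of $X_1\boxtimes W^1$, we get $\Phi_{X_1}(\varphi\circ g)=\Phi_{X_2}(\varphi)\circ(g\boxtimes\mathrm{id})$, which is naturality. Naturality in $W^2$ (postcomposition with $f:W^2\to \tilde W^2$, using $f_{W^1}$) follows from the same kind of check, if it is wanted.

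\textbf{Main obstacle.} The categorical part is formal. The only genuine input is Step 1, i.e.\ Theorem \ref{f-dim}, and deducing finite length of $\mathcal{H}$ from $C_1$-cofiniteness of its contragredient via duality. A secondary point is checking that the morphism $g\boxtimes\mathrm{id}$ used in the monoidal structure of $\mathcal{C}_V$ agrees with the one given by the universal property. This is the case for the tensor category structures of \cite{McR24,CMHFY26}.
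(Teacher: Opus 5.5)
Your proposal is correct and follows essentially the same route as the paper: you place $\mathcal{H}(W^1,W^2)$ in $\mathcal{C}_V$ via Theorem \ref{f-dim} and passage to the contragredient, then compose the universal property of $\mathcal{H}$ with that of $X\boxtimes W^1$ and check naturality in $X$ through the characterization of $g\boxtimes \mathrm{id}$. You also make explicit that $(W^2)'\in\mathcal{C}_V$ is $C_1$-cofinite, which is needed to invoke Theorem \ref{f-dim} and which the paper leaves implicit.
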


\begin{proof}	
By Theorem \ref{f-dim}, $\mathcal{H}(W^1,W^2)^\prime$ is $C_1$-cofinite. 
Thanks to our hypotheses on $V$, $\mathcal{H}(W^1,W^2)^\prime$ has finite composition length.
It follows that $\mathcal{H}(W^1,W^2)$ likewise has finite composition length, hence $\mathcal{H}(W^1, W^2) \in \mathcal{C}_V$.
From the universal properties satisfied by $W^1 \boxtimes W^2$ and $\H(W^1, W^2)$, we obtain the following isomorphism:
\begin{align*}
    & \theta_{X; W^1,W^2}: \Hom_V \left(X, \; \H(W^1, W^2) \right) \to \Hom_V( X \boxtimes W^1, \; W^2) \\
    & \hspace{8em} \varphi \hspace{5em} \longmapsto \hspace{2em} \theta_{X; W^1,W^2}(\varphi),
\end{align*}
where $\theta_{X; W^1,W^2}(\varphi)$ is uniquely determined by 
\[ 
\theta_{X; W^1,W^2}(\varphi)  \mathcal{Y}^{X\boxtimes W^1}\big(-,x\big) = \mathcal{Y}^{\H(W^1, W^2)}\big( \varphi(-), x \big).
\] 
Here, for clarity, we use $\mathcal{Y}^{\H(W^1, W^2)}$ and $\mathcal{Y}^{X\boxtimes W^1}$ 
to denote the canonical intertwining operator in 
$\mathcal{I}\binom{W^2}{\H(W^1, W^2) \ W^1}$ and $\mathcal{I}\binom{W\boxtimes W^1}{X \ W^1}$, respectively. 
To show that $\theta_{X; W^1,W^2}$ is natural in $X$, let $Y \in \mathcal{C}_V$ and $f\in \Hom_V(Y, X)$. 
We need to show that the following diagram is commutative:
\begin{equation*}
    \begin{tikzcd}
        \Hom_V\left(X, \; \H(W^1, W^2) \right)\arrow[r, "\theta_{X; W^1,W^2}"]\arrow[d, "\circ f"] & \Hom_V( X \boxtimes W^1, \; W^2)\arrow[d, "\circ (f\boxtimes id_{W^1})"]\\
        \Hom_V\left(Y, \; \H(W^1, W^2) \right)\arrow[r, "\theta_{Y; W^1,W^2}"] & \Hom_V( Y \boxtimes W^1, \; W^2),
    \end{tikzcd}
\end{equation*}
where $f\boxtimes id_{W^1}\in \Hom_V(Y\boxtimes W^1, X\boxtimes W^1)$ is the unique homomorphism satisfying 
\[
\big( f\boxtimes id_{W^1} \big) \mathcal{Y}^{Y\boxtimes W^1}\big(-, x \big) = \mathcal{Y}^{X\boxtimes W^1}\big(f(-), x \big).
\]  
Now for any $\varphi \in \Hom_V\left(X, \; \H(W^1, W^2) \right)$, we have
\[
\theta_{Y; W^1,W^2}(\varphi \circ  f) \mathcal{Y}^{Y\boxtimes W^1}(-, x) = \mathcal{Y}^{\H(W^1, W^2)}\big(  (\varphi \circ f)(-), x \big),
\]
and
\begin{align*}
\big( \theta_{X; W^1,W^2} (\varphi)  \circ (f\boxtimes id_{W^1}) \big)  \mathcal{Y}^{Y\boxtimes W^1}(-, x) 
&=\theta_{X; W^1,W^2}(\varphi) \mathcal{Y}^{X\boxtimes W^1} \big( f(-), x \big) \\
&=  \mathcal{Y}^{\H(W^1, W^2)}\big( ( \varphi \circ f) (-), x \big).
\end{align*}
Thus, 
\[
\theta_{Y;W^1,W^2}(\varphi \circ f) = \theta_{X; W^1,W^2}(\varphi) \circ (f\boxtimes id_{W^1}),
\]
i.e., the diagram is commutative. 
Consequently, $\H(W^1, W^2)$ is the internal Hom from $W^1$ to $W^2$ in $\mathcal{C}_V$. This completes the proof.

\end{proof}

\begin{remark}
Similarly, one can show that the isomorphism $\theta_{X; W^1, W^2}$ is natural in $W^2$.
Hence, the two functors $-\boxtimes W^1$ and $\H(W^1, -)$ form an adjoint pair.
\end{remark}

\begin{remark}
Let $W^1, W^2 \in \mathcal{C}_V$. By the  universal  property of the tensor product, there exists a unique $V$-module homomorphism 
\[ 
\text{ev}_{W^1, W^2}: \H(W^1, W^2) \boxtimes W^1 \to W^2 
\]
such that $ \text{ev}_{W^1, W^2} \circ \Y^\boxtimes=\Y^\H.$
In fact,  $\mathrm{ev}_{W^1, W^2}$ is precisely the image of $\mathrm{id}_{\H(W^1, W^2)}$ under the natural isomorphism above.
Let $W^1, W^2, W^2 \in \mathcal{C}_V$, then we have the following canonical composition:
\begin{align*}
\big( \H(W^2, W^3)  \boxtimes \H(W^1, W^2) \big) \boxtimes W^1 & \cong \H(W^2, W^3) \boxtimes \big( \H(W^1, W^2) \boxtimes W^1  \big) \\
 & \to \H(W^2, W^3) \boxtimes W^2 \to W^3.
\end{align*}
By the universal property of $\mathcal{H}(W^1, W^3)$, this composition produces a (unique) $V$-module homomorphism 
\[
m: \H(W^2, W^3)  \boxtimes \H(W^1, W^2) \to \H(W^1, W^3).
\]
By the  universal  property of the tensor product,
The homomorphism $m$ corresponds uniquely to an intertwining operator $\mathcal{F}$ of type 
\[
\mathcal{I} \binom{\H(W^1, W^3)  }{\H(W^2, W^3) \;  \H(W^1, W^2) }.
\]
We note that the existence of the $V$-homomorphism $m$ 
is uniquely determined by the properties of the internal Hom in tensor categories \cite{EGNO15}. 
A natural question then arises: how can we construct the intertwining operator $\mathcal{F}$ canonically from the perspective of vertex operator algebras?

\end{remark}
   
\section{  Isomorphism Between Internal Hom and  $P(z_0)$-dual product  }  \label{sec6}

Fix a simply connected domain $U \subset \C-\{0\}$, let $\operatorname{Log} z: U \to \C $ denote a fixed holomorphic branch of the complex logarithm on $U$.
For any complex number $n \in \C$, we define the complex power function $z^n: U \to \C$ via
\[
z^n = e^{n\operatorname{Log} z}, \ \ \ z \in U.
\]
Under this definition, $\operatorname{Log} z$ and $z^{\alpha}$ are holomorphic on $U$.

Let $W^1, W^2, W^3$ be generalized $V$-modules, and let $w^1 \in W^1, w^2 \in W^2$ and $ w_3^\prime \in  (W^3)^\prime$.
For any intertwining operator $\Y$ of type  $\binom{W^3}{W^1   W^2}$, the series
\[
 \langle w_3^\prime, \  \Y(w^1, x)w^2 \rangle=    \sum_{n, k} \langle w_3^\prime,\  w^1_{n;k}w^2 \rangle x^{-n-1} (\log x)^k 
\] is in fact a finite sum \cite[Remark 3.16]{YZ26}. Then, we can define its evaluation at $z \in U$ by
\begin{align*}
 \langle w_3^\prime, \;  \Y(w^1, z)w^2 \rangle
\overset{\mathrm{def}} { =}    \sum_{n, k} \langle w_3^\prime, \;  w^1_{n;k}w^2 \rangle e^{(-n-1)\operatorname{Log} z} (\operatorname{Log} z)^k. 
\end{align*}
Then for any $w^1 \in W^1, w^2 \in W^2$ and $ w_3^\prime \in  (W^3)^\prime$, 
the function 
\[
\langle w_3^\prime, \;  \Y(w^1, z)w^2 \rangle: U \to \C
\]
is holomorphic on $U$.

\vspace{0.5\baselineskip}

The following analytic lemma is standard; for a proof, see \cite{YZ26}.

\begin{lemma} \label{0=0}

	Let $\lambda_1, \lambda_2, \cdots, \lambda_n \in \mathbb{C}$ be pairwise distinct complex numbers, 
    and $k_1, k_2, \cdots, k_m \in \mathbb{N}$ pairwise distinct nonnegative integers.
	 Define the holomorphic function on $U$ by
	\[
	F(z)=\sum_{i=1}^{n}\sum_{j=1}^{m}a_{i,j} e^{ \lambda_i \operatorname{Log} z } (  \operatorname{Log} z )^{k_j}, \; \; a_{i, j} \in \C.
	\] 
    If $F(z) \equiv 0 $ on some non-empty open subset $O \subset U$, then $a_{i,j}=0$ for all $i, j$.
\end{lemma}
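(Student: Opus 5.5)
The plan is to reduce the statement to the classical uniqueness theorem for generalized power series by collecting terms and showing that the individual exponentials are linearly independent functions. By the identity theorem for holomorphic functions, it suffices to handle the case $O=U$ when $U$ is connected. Indeed, $F$ is holomorphic on the connected domain $U$ and vanishes on a nonempty open set, so $F\equiv 0$ on $U$. We may then shrink to a small disc $D\subset U$ on which $\operatorname{Log} z$ is a biholomorphism onto an open set $\Omega\subset\C$. Substituting $s=\operatorname{Log} z$, we get
\[
G(s)=\sum_{i=1}^{n}\sum_{j=1}^{m}a_{i,j}\, s^{k_j}e^{\lambda_i s}\equiv 0
\]
on the open set $\Omega$. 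Since $G$ is entire, the identity theorem gives $G\equiv 0$ on all of $\C$.

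The core step is to show that the functions $s^{k}e^{\lambda s}$, for distinct pairs $(\lambda,k)$, are linearly independent over $\C$ as entire functions. Write $G=\sum_i P_i(s)e^{\lambda_i s}$, where $P_i(s)=\sum_j a_{i,j}s^{k_j}$. We induct on the number $n$ of distinct exponents, and prove the stronger claim that $\sum_i P_i e^{\lambda_i s}\equiv0$ with polynomial coefficients forces every $P_i=0$. The case $n=1$ is immediate, since $e^{\lambda s}$ never vanishes. For the inductive step, multiply by $e^{-\lambda_n s}$ and differentiate $\deg P_n+1$ times. This kills the $P_n$ term, and each remaining term becomes $Q_i e^{(\lambda_i-\lambda_n)s}$. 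Here $Q_i$ is obtained by applying $(d/ds+\lambda_i-\lambda_n)^{\deg P_n+1}$ to $P_i$, and since $\lambda_i-\lambda_n\neq0$, this operator preserves the degree and is injective on polynomials. The induction hypothesis gives $Q_i=0$, hence $P_i=0$ for $i<n$, and then $P_n=0$ as well. Finally, because the $k_j$ are distinct, the monomials $s^{k_j}$ are linearly independent, so every $a_{i,j}=0$.

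I expect the main obstacle to be the passage from $U$ to all of $\C$. In the paper's setting $U$ is a simply connected domain, hence connected, so the identity theorem applies. The only care needed is to note that $\operatorname{Log}$ is locally biholomorphic, which holds because its derivative $1/z$ is nonzero.

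An alternative route avoids analytic continuation altogether. One can use asymptotics along a ray $s=r e^{i\theta}$ with $\theta$ chosen so that the values $\operatorname{Re}(\lambda_i e^{i\theta})$ are pairwise distinct. The term with the largest real part, together with its highest power of $s$, then dominates as $r\to\infty$, which forces its coefficient to vanish. However, that argument needs $\Omega$ to contain such a ray, so the entire-function continuation above is cleaner, and it is the one I would use.
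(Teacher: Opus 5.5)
Your proof is correct and complete. The paper gives no proof of this lemma: it calls the result standard and refers to \cite{YZ26}. So there is no internal argument to compare against, and your write-up serves as a self-contained replacement for that citation.

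The substitution $s=\operatorname{Log} z$ turns $F$ into the exponential polynomial $G(s)=\sum_i P_i(s)e^{\lambda_i s}$. Your induction then uses the identity $e^{-\mu s}\frac{d}{ds}e^{\mu s}=\frac{d}{ds}+\mu$, which for $\mu\neq 0$ preserves degree and is therefore injective on polynomials. This is the classical proof that the functions $s^k e^{\lambda s}$ are linearly independent, and it needs nothing beyond the identity theorem.

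A few small points of economy and care:
\begin{enumerate}[(1)]
\item The preliminary step of extending $F\equiv 0$ to all of $U$ is unnecessary. You can take the disc $D$ inside $O$ directly. In fact $\operatorname{Log}$ is injective on all of $U$, because $e^{\operatorname{Log} z}=z$.
\item With that choice the argument never uses the connectedness of $U$.
\item The continuation of $G$ to all of $\mathbb{C}$ is also optional. The differentiation induction works verbatim on the connected open set $\operatorname{Log}(D)$.
\item In the inductive step you should say what happens when $P_n=0$, since $\deg P_n$ is then undefined. In that case the induction hypothesis applies directly to the remaining $n-1$ terms.
\end{enumerate}
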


The lemma below was employed in \cite{H25C1} to prove the $C_1$-cofiniteness of tensor products of $C_1$-cofinite modules; 
here we reorganize and refine it.

\begin{lemma} \label{keylemma}
Let $W^1, W^2, W^3$ be generalized $V$-modules, $w_3^\prime \in (W^3)^\prime $, 
and let $ \mathcal{Y} $ be an intertwining operator of type  $ \binom{W^3}{W^1\,W^2}$. 
Fix $z_0 \in U$.
If
\[
 \bigl\langle w_3^\prime, \;   \mathcal{Y}(w^1, z_0)w^2 \bigr\rangle = 0
\]
for all $w^1 \in W^1$ and $ w^2 \in W^2$,
then
\[
 \bigl\langle w_3^\prime, \;   \mathcal{Y}(w^1, x)w^2 \bigr\rangle = 0
\]
for all $w^1 \in W^1$ and $ w^2 \in W^2$. 

\end{lemma}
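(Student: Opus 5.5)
The plan is to use the $L(-1)$-derivative property and the $L(0)$-conjugation formula to turn vanishing at the single point $z_0$ into vanishing on an open set, and then apply Lemma \ref{0=0} to kill every coefficient.

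\emph{Step 1: fix $w^2$ and $w_3'$, and allow $w^1$ to vary.} Define
\[
f_{w^1}(z)=\langle w_3',\, \mathcal{Y}(w^1,z)w^2\rangle .
\]
By the $L(-1)$-derivative property,
\[
\frac{d^k}{dz^k} f_{w^1}(z)=f_{L(-1)^k w^1}(z).
\]
The hypothesis holds for every $w^1$, in particular for $L(-1)^k w^1$. Hence all derivatives of $f_{w^1}$ vanish at $z_0$. Since $f_{w^1}$ is holomorphic on the connected domain $U$, it vanishes identically on $U$.

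\emph{Step 2: pass to the formal series.} The series $\langle w_3', \mathcal{Y}(w^1,x)w^2\rangle$ is a finite sum of terms
\[
a_{n,k}\,x^{-n-1}(\log x)^k, \qquad a_{n,k}=\langle w_3', w^1_{n;k}w^2\rangle ,
\]
by \cite[Remark 3.16]{YZ26}. Group the terms by distinct exponents $-n-1$ and distinct powers $k$. Then $f_{w^1}$ has exactly the form of $F$ in Lemma \ref{0=0}, and $F\equiv 0$ on $U$. Lemma \ref{0=0} therefore gives $a_{n,k}=0$ for all $n,k$. This means the formal series $\langle w_3', \mathcal{Y}(w^1,x)w^2\rangle$ is zero, which is the claim.

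\emph{The main obstacle} is the finiteness of the sum. It is needed so that evaluation at $z$ is legitimate and so that Lemma \ref{0=0}, which concerns finite sums, applies. I take it from the cited remark; it depends on $w_3'$ lying in a finite sum of weight spaces.

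\emph{A possible gap in Step 1.} Term-by-term differentiation of a finite sum is unproblematic, but the identity theorem in Step 1 should be checked directly. If one prefers to avoid it, there is an alternative. Use the $L(0)$-conjugation formula for intertwining operators to relate $f_{w^1}(z_0 e^{s})$ to
\[
\langle e^{-sL(0)}w_3',\, \mathcal{Y}(e^{sL(0)}w^1, z_0)\,e^{sL(0)}w^2\rangle .
\]
Expanding this in $s$, each coefficient again has the form covered by the hypothesis. This shows that $f_{w^1}$ vanishes on a neighbourhood of $z_0$, and Lemma \ref{0=0} then finishes the argument.
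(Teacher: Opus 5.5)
Your argument is correct and is essentially the paper's proof: the $L(-1)$-derivative property makes every derivative of $\langle w_3', \mathcal{Y}(w^1,z)w^2\rangle$ vanish at $z_0$, so the function vanishes near $z_0$, and Lemma \ref{0=0} then kills every coefficient. (The paper only uses a neighbourhood of $z_0$; your appeal to the connectedness of $U$ gives vanishing on all of $U$.)

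The gap you flag in Step 1 is not real: if all derivatives vanish at $z_0$, the Taylor expansion already gives vanishing on a disc around $z_0$, so the $L(0)$-conjugation detour is unnecessary. As written, that detour would also need an extra step. Its $s$-coefficients pair against $L(0)^j w_3'$ rather than $w_3'$, which the hypothesis does not directly cover; you would first have to propagate the hypothesis to $L(0)^j w_3'$ using the $L(0)$-commutator formula.
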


\begin{proof}

Take any $w^1 \in W^1, w^2 \in W^2$, and define
\[
f\big(w^1, w^2; z\big)= \bigl\langle w_3^\prime, \;   \mathcal{Y}(w^1, z)w^2 \bigr\rangle.
\]
Then $f\big(w^1,w^2;z\big)$ is holomorphic on $U$.
By the $L(-1)$-derivative property of $\Y$, for any $k \in \N$ we have
\begin{align*}
(\frac{d}{dz})^k f\big(w^1, w^2; z\big) {\big |}_{z =z_0}
 &=(\frac{d}{dx})^k \bigl\langle w_3^\prime, \;   \mathcal{Y}(w^1, x)w^2 \bigr\rangle {\big |}_{ x^n(\log x)^k=e^{ n \operatorname{Log} z_0}(\operatorname{Log} z_0)^k} \\
 &=\bigl\langle w_3^\prime, \;   \mathcal{Y} \big( L(-1)^kw^1, x \big)w^2 \bigr\rangle {\big |}_{ x^n(\log x)^k=e^{ n \operatorname{Log} z_0}(\operatorname{Log} z_0)^k} \\
 &= \bigl\langle w_3^\prime, \;   \mathcal{Y} \big( L(-1)^kw^1, z_0 \big)w^2 \bigr\rangle=0.
\end{align*}
Thus, there exists an open domain $O_{w_1, w_2} \subset U$ containing $z_0$ such that $f\big(w^1, w^2; z \big)=0$ 
for all $z \in O_{w_1, w_2}$.
By Lemma \ref{0=0}, we obtain $\langle w_3^\prime, \;  w^1_{n;k}w^2 \rangle =0$  for all $n \in \C$  and $ k \in \N$.
It follows that
\[
 \bigl\langle w_3^\prime, \;   \mathcal{Y}(w^1, x)w^2 \bigr\rangle = 0.
\]
The proof is complete.

\end{proof}

We review the recent constructions of  $W^1 \pzbox_{P(z_0)} W^2$ in the untwisted setting  given in \cite{DH25} and \cite{H26}. 
Note that \cite{DH25} defines intertwining operators via analytic methods, whereas \cite{H26} uses the twisted Jacobi identity. 
In the special case $g_1=g_2=\text{id}_V$ of \cite{H26}, the resulting space $W^1 \pzbox_{P(z_0)} W^2$ coincides with our definition given below.

Fix $z_0 \in U$. Let $V$ be a vertex operator algebra, and let $W^1, W^2$ be restricted $V$-modules.
Given a restricted $V$-module $W$, an intertwining operator $\mathcal{Y}_W$ of type $\binom{W}{W^1\,W^2}$,
and $w'\in W'$, we define an element $\boldsymbol{\lambda}_{\mathcal{Y}_W,w'}\in (W^1\otimes W^2)^*$ by
\[
\boldsymbol{\lambda}_{\mathcal{Y}_W,w'}(w^1\otimes w^2)
= \langle w', \; \mathcal{Y}_W(w^1,z_0) w^2 \rangle \in \mathbb{C},
\]
for all $w^1\in W^1$ and $w^2\in W^2 $.

We then define $W^1\pzbox_{P(z_0)} W^2 $ to be the subspace of $(W^1\otimes W^2)^*$
spanned by all such elements $\boldsymbol{\lambda}_{\mathcal{Y}_W,w'}$,
where $W$ runs over all restricted $V$-submodules,
$\mathcal{Y}_W$ runs over all intertwining operators of type $\binom{W}{W^1\,W^2}$,
and $w'$  ranges over $ W'$. We call $W^1 \pzbox_{P(z_0)} W^2$ the {\bf $P(z_0)$-dual product } of $W^1$ and $W^2$.
By a similar argument as in the definition of $\H(W^1, W^2)$, every element in $W^1 \pzbox_{P(z_0)} W^2$ can be expressed as 
$\boldsymbol{\lambda}_{\mathcal{Y}_W,w'} $ for some restricted $V$-module $W$, $ w^\prime \in W^\prime$, 
and intertwining operator $ \mathcal{Y}_W \in \mathcal{I}\binom{W}{W^1 \;W^2}$. The following result was obtained in \cite{DH25}. We include a proof of the well-definedness of the module action for completeness.

\begin{lemma} 
Let $u \in V$, $n \in \mathbb{Z}$, and $ \boldsymbol{\lambda}_{\mathcal{Y}_W,w'} \in W^1\pzbox_{P(z_0)} W^2$. Define an action
\[
u_n \boldsymbol{\lambda}_{\mathcal{Y}_W,w'} = \boldsymbol{\lambda}_{\mathcal{Y}_W, u_nw'}  \in W^1\pzbox_{P(z_0)} W^2.
\]
Then this action is well-defined, and $W^1\pzbox_{P(z_0)} W^2$ is a generalized $V$-module under this action.
\end{lemma}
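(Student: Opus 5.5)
The plan is to mirror the well-definedness argument used for $u^{\H}_n$ on $\H(W^1,W^2)$, with Lemma \ref{keylemma} replacing the formal-variable identity. Suppose $\boldsymbol{\lambda}_{\mathcal{Y}_W,w'}=\boldsymbol{\lambda}_{\mathcal{Y}_M,m'}$ in $(W^1\otimes W^2)^*$ for restricted modules $W,M$. Form $N=W\oplus M$ with the intertwining operator $\mathcal{Y}_N=\mathcal{Y}_W\oplus\mathcal{Y}_M$ and the element $n'=(w',-m')\in N'$. Then $\langle n',\mathcal{Y}_N(w^1,z_0)w^2\rangle=0$ for all $w^1\in W^1$ and $w^2\in W^2$.

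By Lemma \ref{keylemma}, this vanishing at $z_0$ upgrades to the formal identity $\langle n',\mathcal{Y}_N(w^1,x)w^2\rangle=0$ for all $w^1,w^2$. The key step is then to show that the set
\[
K=\{\, n'\in N' \mid \langle n',\mathcal{Y}_N(w^1,x)w^2\rangle=0 \ \ \forall\, w^1\in W^1,\ w^2\in W^2\,\}
\]
is a $V$-submodule of $N'$. Two routes are available:
\begin{itemize}
\item use the Jacobi identity with the adjoint formula \eqref{dual-id}, expressing $\langle u_n n',\mathcal{Y}_N(w^1,x)w^2\rangle$ via $\langle n', Y_N(\cdot,x_1)\mathcal{Y}_N(w^1,x)w^2\rangle$ and then commuting $Y_N$ past $\mathcal{Y}_N$;
\item more cleanly, note that $K$ is exactly the annihilator, in the first argument, of $A_0\Omega_0\mathcal{Y}_N\in\mathcal{I}\binom{(W^1)'}{N'\;W^2}$. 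This follows from Lemma \ref{Inj-Sur}(1) with the roles arranged suitably, or directly from the definitions of $\Omega_0$ and $A_0$. By Lemma \ref{gen-submodule-lemma}, that annihilator is a submodule.
\end{itemize}

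Hence $u_n n'\in K$. Evaluating at $z_0$ gives $\boldsymbol{\lambda}_{\mathcal{Y}_W,u_nw'}=\boldsymbol{\lambda}_{\mathcal{Y}_M,u_nm'}$, so the action is well defined. Linearity of $\boldsymbol{\lambda}$ in the pair $(\mathcal{Y},w')$ holds via direct sums, exactly as for $\H$, so this direct-sum reduction is legitimate.

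For the module structure, let $\rho_W\colon W'\to W^1\pzbox_{P(z_0)}W^2$ be the map $w'\mapsto\boldsymbol{\lambda}_{\mathcal{Y}_W,w'}$. By definition $\rho_W$ intertwines $u_n$, so $Y(u,x)\boldsymbol{\lambda}_{\mathcal{Y}_W,w'}=\boldsymbol{\lambda}_{\mathcal{Y}_W,Y_{W'}(u,x)w'}$. The vacuum property, lower truncation and the Jacobi identity are then inherited from $W'$, exactly as in Theorem \ref{interHom1}(1). Each image $\rho_W(W')$ is a quotient of the restricted module $W'$, and the whole space is the sum of these images. So $L(0)$ acts locally finitely with a generalized eigenspace decomposition, and $W^1\pzbox_{P(z_0)}W^2$ is a generalized $V$-module.

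The main obstacle is the submodule property of $K$. I would first try to verify carefully that $K$ coincides with the first-argument annihilator of $A_0\Omega_0\mathcal{Y}_N$. The paper's lemmas are stated with $\mathcal{Y}$'s first slot, so the bookkeeping of which module plays which role needs checking. If that identification is awkward, I would fall back on a direct residue computation using \eqref{dual-id} and the Jacobi identity.
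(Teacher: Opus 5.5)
Your proposal is correct, and your first route is essentially the paper's proof: you take the direct sum $\mathcal{Y}_W\oplus\mathcal{Y}_M$ with $(w',-m')$, use Lemma \ref{keylemma} to pass from $z_0$ to the formal variable, then combine the Jacobi identity with \eqref{dual-id} to transfer the vanishing to $u_nw'$ and $u_nm'$, and the module structure is inherited from the restricted modules $W'$. Your preferred second route has a type error that needs correcting: $A_0\Omega_0\mathcal{Y}_N$ actually has type $\binom{(W^1)'}{W^2\;N'}$, so $N'$ is its second argument and Lemma \ref{gen-submodule-lemma} does not apply as cited; instead take $\mathcal{Z}=\Omega_0A_0\mathcal{Y}_N\in\mathcal{I}\binom{(W^2)'}{N'\;W^1}$, for which Lemma \ref{Inj-Sur}(1) applied to $\mathcal{Z}$ (note $A_{-1}\Omega_{-1}\mathcal{Z}=\mathcal{Y}_N$) gives $K=\{n'\in N' \mid \mathcal{Z}(n',x)=0\}$, and then your argument goes through.
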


\begin{proof}

We first prove that this action is well-defined. 
Suppose that $\boldsymbol{\lambda}_{\mathcal{Y}_W,w'} =\boldsymbol{\lambda}_{\mathcal{Y}_M,m'}$ as elements of $W^1 \pzbox_{P(z_0)} W^2$.
By definition, this implies
\[
\langle w', \; \mathcal{Y}_W(w^1,z_0) w^2 \rangle = \langle m', \; \mathcal{Y}_M(w^1,z_0) w^2 \rangle,  
\; \; \; \; \forall \; w^1 \in W^1, \; w^2 \in W^2.
\]
Consider the direct sum intertwining operator 
\[
\Y_W \oplus \Y_M \in \mathcal{I} \binom{W \oplus M }{W^1 \; W^2 }. 
\]
We canonically identify the pair $(w^\prime, -m^\prime)$ with an element of $( W \oplus M)^\prime$.
It follows that 
\[
 \big  \langle (w^\prime, -m^\prime), \; \big(\Y_W \oplus \Y_M \big) (w^1,z_0) w^2   \big  \rangle =0  \; \; \; \; \forall \; w^1 \in W^1, \; w^2 \in W^2.
\]
Applying Lemma \ref{keylemma}, we deduce
\[
\langle w', \; \mathcal{Y}_W(w^1, x) w^2 \rangle = \langle m', \; \mathcal{Y}_M(w^1, x) w^2 \rangle \; \; \; \;
\forall \; w^1 \in W^1, \; w^2 \in W^2.
\]
By the Jacobi identity for $\Y_W$, the following identity holds for all $w^1 \in W^1, w^2 \in W^2$, and $u \in V$:
\begin{align*}
& \left \langle w', \;  Y_W(u, x_1) \mathcal{Y}_W(w^1, x) w^2 \right \rangle \\
&= \left \langle w', \;  \mathcal{Y}_W(w^1, x) Y_{W^2}(u, x_1)  w^2 \right \rangle 
 +\Res_{x_0}  x^{-1} \delta\!\left(\frac{x_1-x_0}{x}\right)  
 \left \langle w', \;  \Y_W \left ( Y_{W^1}(u, x_0)  w^1, x \right)   w^2 \right \rangle \\
&= \left \langle m', \;  \mathcal{Y}_M(w^1, x) Y_{W^2}(u, x_1)  w^2 \right \rangle 
 +\Res_{x_0}  x^{-1} \delta\!\left(\frac{x_1-x_0}{x}\right) 
 \left \langle m', \;  \Y_M \left( Y_{W^1}(u, x_0)  w^1, x \right)   w^2 \right \rangle\\
&=  \left \langle m', \;  Y_M(u, x_1) \mathcal{Y}_M(w^1, x) w^2 \right \rangle
\end{align*}
It then follows that
\begin{align*}
\langle  u_nw', \; \mathcal{Y}_W(w^1, x) w^2 \rangle  
& =\Res_{x_1} x_1^n \left \langle Y_{W^\prime}(u, x_1) w', \; \mathcal{Y}_W(w^1, x) w^2 \right \rangle \\
&  =\Res_{x_1} x_1^n \left \langle  w', \; Y_W\big( e^{x_1L(1)} (-x_1^{-2})^{L(0)}u,  x_1^{-1}  \big) \mathcal{Y}_W(w^1, x) w^2 \right \rangle\\
&  =\Res_{x_1} x_1^n \left \langle  m', \; Y_M\big( e^{x_1L(1)} (-x_1^{-2})^{L(0)}u,  x_1^{-1}  \big) \mathcal{Y}_M(w^1, x) w^2 \right \rangle \\
&  = \Res_{x_1} x_1^n \left \langle  Y_{M^\prime}(u, x_1)  m', \; \mathcal{Y}_M(w^1, x) w^2 \right \rangle \\
&  = \langle u_nm', \; \mathcal{Y}_M(w^1, x) w^2 \rangle.
\end{align*}
valid for all $w^1 \in W^1, w^2 \in W^2$.
Consequently, we have
\[
\langle u_nw', \; \mathcal{Y}_W(w^1, z_0) w^2 \rangle = \langle u_nm', \;  \mathcal{Y}_M(w^1, z_0) w^2 \rangle \; \; \; \;
\forall \; w^1 \in W^1, \; w^2 \in W^2.
\]
This precisely gives $ \boldsymbol{\lambda}_{\mathcal{Y}_W, u_nw'}= \boldsymbol{\lambda}_{\mathcal{Y}_M, u_nm'}$.
Therefore, the action of $u_n$ on $W^1 \pzbox_{P(z_0)} W^2$ is well-defined. 
Now since each $W^\prime$ is a restricted $V$-module, 
$W^1\pzbox_{P(z_0)} W^2 $ carries a natural generalized $V$-module structure with respect to this action.

This completes the proof.

\end{proof}

The $P(z_0)$-dual product provides a natural realization of tensor products. 
According to \cite[Theorem 4.6]{DH25}, if $W^1 \pzbox_{P(z_0)} W^2$ is a restricted $V$-module, 
then the tensor product $W^1 \boxtimes W^2$ exists in the category of restricted $V$-modules, and satisfying
\[
W^1 \boxtimes W^2 \cong \big( W^1 \pzbox_{P(z_0)} W^2 \big)^\prime.
\]
Moreover, assuming that $W^1, W^2$ are $C_1$-cofinite restricted $V$-modules. 
It was shown in  \cite[Theorem 5.6]{H26} that $  W^1 \pzbox_{P(z)}W^2 $ is again a restricted $V$-module.
Combining these results with Corollary \ref{C1H=Tensor}, we obtain the following $V$-module isomorphism:
\[
\mathcal{H}\big(W^1, (W^2)^\prime \big) \cong (W^1 \boxtimes W^2)^\prime \cong W^1 \pzbox_{P(z)}W^2.
\]
This immediately gives
\[
\H \big(W^1, (W^2)^\prime \big) \cong W^1 \pzbox_{P(z)}W^2.
\]
The following theorem shows that this isomorphism remains valid even without $C_1$-cofiniteness:

\begin{theorem}
	Let  $V$ be a vertex operator algebra, and let $W^1, W^2$ be restricted $V$-modules.
	Then $\H \big(W^1, (W^2)^\prime \big)  $   and  $   W^1\pzbox_{P(z_0)} W^2$ are isomorphic as generalized $V$-modules.
\end{theorem}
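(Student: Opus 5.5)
We construct an explicit linear map $\Phi\colon \H\big(W^1,(W^2)'\big)\to W^1\pzbox_{P(z_0)}W^2$ and check that it is a well-defined bijective $V$-module homomorphism. An element of $\H\big(W^1,(W^2)'\big)$ has the form $\Y_A(a,t)$, where $A$ is restricted and $\Y_A\in\mathcal{I}\binom{(W^2)'}{A\;W^1}$. By Lemma \ref{gen-submodule-lemma} and the direct-sum trick we may assume $A$ is generated by $a$. The operator $A_0\Omega_0\Y_A$ is then an intertwining operator of type $\binom{A'}{W^1\;W^2}$, using $(W^2)''=W^2$. Since $A'$ is restricted and $a\in A=A''$, we obtain an element $\boldsymbol{\lambda}_{A_0\Omega_0\Y_A,\,a}\in W^1\pzbox_{P(z_0)}W^2$, and we set
\[
\Phi\big(\Y_A(a,t)\big)=\boldsymbol{\lambda}_{A_0\Omega_0\Y_A,\,a}.
\]
Conversely, an element $\boldsymbol{\lambda}_{\Y_W,w'}$, with $W$ restricted and $w'\in W'$, is sent to $\Omega_{-1}A_{-1}\Y_W(w',t)\in\H\big(W^1,(W^2)'\big)$. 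Here $\Omega_{-1}A_{-1}\Y_W$ is of type $\binom{(W^2)'}{W'\;W^1}$, and this map will be the inverse $\Psi$.

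The key computation is an identity relating the two pairings. Unwinding the definitions of $A_0$ and $\Omega_0$ (as in the proof of Lemma \ref{Inj-Sur}(1)) gives
\[
\big\langle a,\;A_0\Omega_0\Y_A(w^1,x)w^2\big\rangle
=\Big\langle e^{xL(1)}\big(e^{\pi\mathrm{i}}x^{-2}\big)^{L(0)}w^2,\;\Y_A\big(a,\,e^{\pi\mathrm{i}}x^{-1}\big)\,e^{x^{-1}L(1)}\,w^1\Big\rangle
\]
up to the exact placement of the exponentials. The precise form is not important. What matters is that these are invertible substitutions in $w^1,w^2$ and the variable.

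The main step, and the expected obstacle, is well-definedness together with injectivity. Suppose $\Y_A(a,t)=\Y_B(b,t)$ as formal series. Then the formal pairings $\langle a,A_0\Omega_0\Y_A(w^1,x)w^2\rangle$ and $\langle b,A_0\Omega_0\Y_B(w^1,x)w^2\rangle$ agree for all $w^1,w^2$. Specializing $x$ to $z_0$ shows that the two $\boldsymbol{\lambda}$'s coincide.

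For the converse, suppose the two $\boldsymbol{\lambda}$'s agree at $z_0$. Apply Lemma \ref{keylemma} to the direct sum operator $A_0\Omega_0\Y_A\oplus A_0\Omega_0\Y_B$ with the functional $(a,-b)$. This upgrades the equality at $z_0$ to an equality of formal series. Lemma \ref{Inj-Sur}(1), with the substitutions there reversed, then gives $\langle w',\Y_A(a,x)w^1\rangle=\langle w',\Y_B(b,x)w^1\rangle$ for all $w'$, hence $\Y_A(a,t)=\Y_B(b,t)$.

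The same argument shows that $\Psi$ is well defined. The identities $A_{-1}A_0=\mathrm{id}$ and $\Omega_{-1}\Omega_0=\mathrm{id}$ (in the appropriate order) show that $\Phi$ and $\Psi$ are mutually inverse. Some care is needed with the ordering of the $A$'s and $\Omega$'s, and with the branch convention for $\operatorname{Log}z_0$, which must match the one used in the definition of $\boldsymbol{\lambda}$. Lemma \ref{keylemma} is stated for a fixed branch, so this causes no trouble.

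Finally, $V$-linearity is immediate. We have $\Phi\big(u^{\H}_n\Y_A(a,t)\big)=\boldsymbol{\lambda}_{A_0\Omega_0\Y_A,\,u_na}$, and under the identification $A=A''$ the vector $u_na$ is exactly $u_n$ applied to $a\in A''$. This equals $u_n\boldsymbol{\lambda}_{A_0\Omega_0\Y_A,\,a}$ by the definition of the action on $W^1\pzbox_{P(z_0)}W^2$. Hence $\Phi$ is an isomorphism of generalized $V$-modules.
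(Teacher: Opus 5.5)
Your proposal is correct and is essentially the paper's proof: the same map $\Phi\big(\Y_A(a,t)\big)=\boldsymbol{\lambda}_{A_0\Omega_0\Y_A,\,a}$, well-definedness by specializing a formal identity at $z_0$, and injectivity via Lemma \ref{keylemma} followed by Lemma \ref{Inj-Sur}(1). Your inverse $\Psi$, given by $\Omega_{-1}A_{-1}$, is exactly the preimage the paper uses for surjectivity, and the reduction to $A$ generated by $a$ is unnecessary. One correction: unwinding the definitions gives $\bigl\langle e^{x^{-1}L(1)}w^2,\ \Y_A\big(a,e^{\pi\mathrm{i}}x^{-1}\big)e^{xL(1)}\big(e^{\pi\mathrm{i}}x^{-2}\big)^{L(0)}w^1\bigr\rangle$, so the exponentials sit on the opposite vectors from what you wrote; as you note, only the invertibility of these substitutions is used, so this does not affect the argument.
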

	
	\begin{proof}

  Define a linear map 
    \[
    \Phi:   \H \big(W^1, (W^2)^\prime \big)  \to  W^1 \pzbox_{P(z_0)} W^2
    \]
   via the assignment
    \[
    \Phi \big( \Y_A(a, t) \big) = \boldsymbol{\lambda}_{  A_0 \Omega_0\Y_A, \ a },
    \]
   for every $\Y_A(a, t)  \in  \H \big(W^1, (W^2)^\prime \big)$. 
   Note that the subscript $a$ appearing in $\boldsymbol{\lambda}_{  A_0 \Omega_0\Y_A, \; a } $ is canonically regarded as an element of $A^{\prime \prime}$.
    
	First, we verify that $\Phi$ is well-defined. 
    Assume $\Y_A(a, t) $ and $\Y_B(b, t) $ define the same element in $\H \big(W^1, ( W^2)^\prime \big)$.
    By the definitions of $A_0 \Y$ and $\Omega_0 \Y$, for all $w^1 \in W^1$ and $w^2 \in W^2$ we have
    \begin{align*}
     & \Phi \Big( \Y_A(a, t) \Big) (w^1 \otimes w^2) = \Big\langle  A_0 \Omega_0\Y_A(w^1, z_0)w^2, \ a  \Big \rangle \\
     &= \Big\langle  A_0 \Omega_0\Y_A(w^1, x)w^2, \ a  \Big \rangle {\big |}_{  x^n (\log x)^k=e^{ n \operatorname{Log} z_0}(\operatorname{Log} z_0)^k }  \\
     &=\Big\langle w^2, \ e^{x^{-1} L(-1)} \Y_A 
     \big(a, e^{\pi \mathrm{i}}x^{-1} \big) e^{xL(1)} \big( e^{\pi \mathrm{i}} x^{-2} \big)^{L(0)}w^1    
     \Big \rangle { \big | }_{  x^n (\log x)^k=e^{ n \operatorname{Log} z_0}(\operatorname{Log} z_0)^k }\\
     &=\Big\langle w^2, \ e^{x^{-1} L(-1)} \Y_B 
     \big(b, e^{\pi \mathrm{i}}x^{-1} \big) e^{xL(1)} \big( e^{\pi \mathrm{i}} x^{-2} \big)^{L(0)}w^1    
     \Big \rangle {\big |}_{ x^n(\log x)^k=e^{ n \operatorname{Log} z_0} (\operatorname{Log} z_0)^k}\\
     &= \Big\langle  A_0 \Omega_0\Y_B(w^1, x)w^2, \ b  \Big \rangle {\big |}_{ x^n(\log x)^k=e^{ n \operatorname{Log} z_0}(\operatorname{Log} z_0)^k }  \\
     &=\Big\langle  A_0 \Omega_0\Y_B(w^1, z_0)w^2, \ b  \Big \rangle
     = \Phi \Big( \Y_B(b, t) \Big) (w^1 \otimes w^2).
    \end{align*}
     Consequently, 
     \[
     \Phi \Big(  \Y_A(a, t) \Big)=\Phi \Big( \Y_B(b, t) \Big).
     \]  
    This proves that $\Phi$ is well-defined.  
   From the definitions of the actions 
   $u_n$ on  $\H \big(W^1, (W^2)^\prime \big)$ and on $ W^1\pzbox_{P(z)} W^2$,  we know that $\Phi $ is a $V$-homomorphism.

    Next, we show that $\Phi$ is surjective. 
    Take an arbitrary element $\boldsymbol{\lambda}_{\mathcal{Y}_W, w'} \in  W^1\pzbox_{P(z)} W^2, $
	where $W$ is a restricted \(V\)-module, $w^\prime \in W^\prime $, and $\Y_W $ is an intertwining operator of type $ \binom{W}{W^1\,W^2} $. 
    One has
    \[
    \big( \Omega_{-1}A_{-1}\Y_W \big) (w^\prime, t) \in \H\big(W^1, (W^2)^\prime \big), 
    \]
	and the identity
    \[
    \Phi \Big(  \big( \Omega_{-1}A_{-1}\Y_W \big) (w^\prime, t) \Big) = \boldsymbol{\lambda}_{\mathcal{Y}_W, w'}.
    \]
	holds. Hence $\Phi$ is surjective.

   It remains to establish injectivity of $\Phi$. Assume that $\Phi \Big( \Y_A(a, t) \Big) =0.$
   Then
    \[
     \Big\langle A_0 \Omega_0\Y_A \big(w^1, z_0 \big)w^2, \; a  \Big \rangle  =0,
    \]
 for all $w^1 \in W^1$ and $w^2 \in W^2$. 
By Lemma \ref{keylemma}, we have 
\[
 \Big\langle A_0 \Omega_0\Y_A \big(w^1, x \big)w^2, \; a  \Big \rangle  =0
\]
for all $w^1 \in W^1$ and $w^2 \in W^2$.
It follows from Lemma \ref{Inj-Sur}(1) that
\[
 \Big\langle \Y_A \big(a, x \big)w^1, \; w^2  \Big \rangle  =0
\]
for every $w^1 \in W^1$ and $w^2 \in W^2$.
This forces $\Y_A(a, t)=0$. So, $\Phi$ is injective.

Therefore, $\Phi$ is a $V$-module isomorphism, which finishes the proof.        		
\end{proof}

\section*{Acknowledgement}

The first author is supported by NSFC (No. 12301039) and the Sichuan Science and Technology Program (No. 2025ZNSFSC0797).

%\section*{Declarations}

%The authors have no competing interests to declare that are relevant to the content of this article.
%No data was used for the research described in the article.

% \bibliographystyle{alpha}
% \bibliography{voa}

\end{document}